\numberwithin{equation}{section}
\newtheorem{theoremcounter}{theoremcounter}[section]
\theoremstyle{plain}
\newtheorem{corollary}[theoremcounter]{Corollary}
\newtheorem{lemma}[theoremcounter]{Lemma}
\newtheorem{proposition}[theoremcounter]{Proposition}
\newtheorem{theorem}[theoremcounter]{Theorem}
\newtheorem{claim}{Claim}
\theoremstyle{definition}
\newtheorem{definition}[theoremcounter]{Definition}
\theoremstyle{remark}
\newtheorem{example}[theoremcounter]{Example}
\newtheorem{problem}[theoremcounter]{Problem}
\newtheorem{remark}[theoremcounter]{Remark}
\newcommandx{\unsure}[2][1=]{\todo[linecolor=red,backgroundcolor=red!25,bordercolor=red,#1]{#2}}
\newcommandx{\change}[2][1=]{\todo[linecolor=blue,backgroundcolor=blue!25,bordercolor=blue,#1]{#2}}
\newcommandx{\info}[2][1=]{\todo[linecolor=OliveGreen,backgroundcolor=OliveGreen!25,bordercolor=OliveGreen,#1]{#2}}
\newcommandx{\improvement}[2][1=]{\todo[linecolor=Plum,backgroundcolor=Plum!25,bordercolor=Plum,#1]{#2}}
\newcommand{\cC}{\ensuremath{\mathcal{C}}}
\newcommand{\cF}{\ensuremath{\mathcal{F}}}
\newcommand{\cG}{\ensuremath{\mathcal{G}}}
\newcommand{\cH}{\ensuremath{\mathcal{H}}}
\newcommand{\cK}{\ensuremath{\mathcal{K}}}
\newcommand{\cN}{\ensuremath{\mathcal{N}}}
\newcommand{\cO}{\ensuremath{\mathcal{O}}}
\newcommand{\cP}{\ensuremath{\mathcal{P}}}
\newcommand{\cS}{\ensuremath{\mathcal{S}}}
\newcommand{\cU}{\ensuremath{\mathcal{U}}}
\newcommand{\cV}{\ensuremath{\mathcal{V}}}
\newcommand{\bC}{\ensuremath{\mathbb{C}}}
\newcommand{\bN}{\ensuremath{\mathbb{N}}}
\newcommand{\bR}{\ensuremath{\mathbb{R}}}
\newcommand{\rC}{\ensuremath{\mathrm{C}}}
\newcommand{\rE}{\ensuremath{\mathrm{E}}}
\newcommand{\rH}{\ensuremath{\mathrm{H}}}
\newcommand{\rK}{\ensuremath{\mathrm{K}}}
\newcommand{\rM}{\ensuremath{\mathrm{M}}}
\newcommand{\rmd}{\ensuremath{\mathrm{d}}}
\newcommand{\rmr}{\ensuremath{\mathrm{r}}}
\newcommand{\rms}{\ensuremath{\mathrm{s}}}
\newcommand{\veps}{\ensuremath{\varepsilon}}
\newcommand{\vphi}{\ensuremath{\varphi}}
\newcommand{\ol}{\overline}
\newcommand{\eqstop}{\ensuremath{\, \text{.}}}
\newcommand{\eqcomma}{\ensuremath{\, \text{,}}}
\newcommand{\NN}{\ensuremath{\mathbb{N}}}
\newcommand{\ZZ}{\ensuremath{\mathbb{Z}}}
\newcommand{\QQ}{\ensuremath{\mathbb{Q}}}
\newcommand{\RR}{\ensuremath{\mathbb{R}}}
\newcommand{\CC}{\ensuremath{\mathbb{C}}}
\newcommand{\GL}{\operatorname{GL}}
\newcommand{\Cstar}{\ensuremath{\mathrm{C}^*}}
\newcommand{\supp}{\ensuremath{\mathop{\mathrm{supp}}}}
\newcommand{\im}{\ensuremath{\mathop{\mathrm{im}}}}
\newcommand{\Cstarred}{\ensuremath{\Cstar_\mathrm{red}}}
\newcommand{\cont}{\ensuremath{\mathrm{C}}}
\newcommand{\contb}{\ensuremath{\mathrm{C}_\mathrm{b}}}
\newcommand{\conto}{\ensuremath{\mathrm{C}_0}}
\newcommand{\contc}{\ensuremath{\mathrm{C}_\mathrm{c}}}
\newcommand{\Ltwo}{\ensuremath{{\offinterlineskip \mathrm{L} \hskip -0.3ex ^2}}}
\newcommand{\Lone}{\ensuremath{{\offinterlineskip \mathrm{L} \hskip -0.3ex ^1}}}
\newcommand{\grpaction}[1]{\ensuremath{\stackrel{#1}{\curvearrowright}}}
\newcommand{\tubedim}{\operatorname{dim}_{\mathrm{tube}}}
\newcommand{\dimnuc}{\operatorname{dim}_{\mathrm{nuc}}}
\DeclareMathOperator{\interior}{int}
\DeclareMathOperator{\boxen}{tube}
\def\boxint#1{\interior_{\boxen}(#1)}
\def\boxboundary#1{\partial_{\boxen}(#1)}
\DeclareMathOperator{\vol}{vol}
\DeclareMathOperator{\pair}{Pair}
\newcommand{\pairtube}{\mathcal{H}}
\newcommand{\pairtubeopen}{\mathcal{H}^{\mathrm{open}}}
\newcommand{\pairtubeopeni}{\tilde{\mathcal{H}}_i}
\newcommand{\haar}{m}
\newcommand{\lreg}{\lambda}
\newcommand{\nought}{^{(0)}}
\newcommand{\authors}{Ulrik Enstad $\bullet$ Gabriel Favre $\bullet$ Sven Raum}
\renewcommand{\title}{Free actions of polynomial growth Lie groups and classifiable C*-algebras}
\begin{document}


\thispagestyle{empty}

\noindent
\begin{minipage}{\linewidth}
  \begin{center}
    \textbf{\Large \title} \\
    \authors    
  \end{center}
\end{minipage}


\vspace{2em}
\noindent
\begin{minipage}{\linewidth}
  \textbf{Abstract}.
  We show that any free action of a connected Lie group of polynomial growth on a finite dimensional locally compact space has finite tube dimension.  This is shown to imply that the associated crossed product C*-algebra has finite nuclear dimension.  As an application we show that C*-algebras associated with certain aperiodic point sets in connected Lie groups of polynomial growth are classifiable.  Examples include cut-and-project sets constructed from irreducible lattices in products of connected nilpotent Lie groups.
\end{minipage}


\section{Introduction}
\label{sec:introduction}

An important step in Elliott's classification programme for C*-algebras has been the insight that besides long anticipated assumptions such as simplicity and nuclearity, it is necessary to assume some additional regularity of the C*-algebras under consideration. One such notion of regularity is that of finite nuclear dimension, which was introduced by Winter and Zacharias in \cite{winterzacharias2010} as a noncommutative generalisation of finite Lebesgue covering dimension. A C*-algebra is called classifiable if it is simple, separable, non-elementary, unital, satisfies the UCT and has finite nuclear dimension.  Work of many hands has shown that such C*-algebras are classified up to isomorphism by $\rK$-theory and tracial data; see \cite{winter18-icm} and the references therein.

After Elliott's classification programme saw its breakthrough, the investigation of natural examples arising for instance from topological dynamics moved into the focus of substantial parts of the community.  One bridge between these topics is created by the crossed product C*-algebra $\conto(X) \rtimes G$ which can be associated with any action $G \grpaction{} X$ of a locally compact group on a locally compact Hausdorff space.  This line of research can be roughly divided into two directions: One investigating actions of discrete groups and the other one actions of connected groups.  We are not concerned with discrete group actions in this article, but point the reader to the work in \cite{hirshbergwinterzacharias2015, szabowuzacharias2019, naryshkin2022, gardellageffenkranznaryshkin2023}, where classifiability of crossed products with various discrete groups has been established.  In particular, \cite{szabowuzacharias2019} shows finite nuclear dimension of crossed product \mbox{C*-algebras} associated with free actions of finitely generated groups of polynomial growth acting on compact spaces of finite covering dimension.

For actions of connected locally compact groups, classifiability results and nuclear dimension estimates so far have been restricted to $\RR$-actions.  In \cite{hirshbergszabowinterwu2017}, Hirshberg, Szabo, Winter and Wu developed a notion of Rokhlin dimension for $\RR$-actions which was used to show that the crossed product of a free action of $\RR$ on a finite-dimensional space has finite nuclear dimension.  We note that Hirshberg and Wu were later able to remove the assumption that the action is free in \cite{hirshbergwu2021}.

The main theorem of the present paper extends the main result of \cite{hirshbergszabowinterwu2017} to free actions of connected Lie groups of polynomial growth.  By Breuillard's work \cite{breuillard14} the growth rate of any compactly generated group of polynomial growth $G$ behaves asymptotically like $n^{\rmd(G)}$ for some uniquely determined integer $\rmd(G)$.  We prove the following explicit nuclear dimension bound in terms of the dimension $\dim X$ and $\rmd(G)$.
\begin{theorem}
  \label{thm:intro-nuclear-dimension-estimate}
  Let $G \grpaction{} X$ be a free action of a connected Lie group of polynomial growth on a locally compact space $X$. Then
  \begin{gather*}
    \dimnuc(\conto(X) \rtimes G)  \leq 11^{\rmd(G)} \cdot (\dim X + 1)^2 - 1
    \eqstop
  \end{gather*}
\end{theorem}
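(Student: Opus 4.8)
The plan is to factor the estimate through an intermediate, purely dynamical invariant --- the tube dimension $\tubedim(G \grpaction{} X)$ announced in the abstract --- and to establish two separate bounds whose product yields the theorem. Concretely, I would prove
\begin{gather*}
  \tubedim(G \grpaction{} X) + 1 \;\leq\; 11^{\rmd(G)} \cdot (\dim X + 1)
\end{gather*}
from the large-scale geometry of $G$, together with a general crossed-product estimate
\begin{gather*}
  \dimnuc(\conto(X) \rtimes G) + 1 \;\leq\; \bigl( \tubedim(G \grpaction{} X) + 1 \bigr) \cdot (\dim X + 1)
  \eqstop
\end{gather*}
Multiplying the two and subtracting $1$ gives exactly $11^{\rmd(G)}(\dim X + 1)^2 - 1$, so the whole problem splits into a geometric step and an operator-algebraic step.

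For the tube dimension bound I would start from Breuillard's structure theory: a connected Lie group $G$ of polynomial growth is amenable, and its balls $B_r$ in a fixed left-invariant length metric satisfy $\vol(B_{2r}) \leq C \, \vol(B_r)$ with a doubling constant governed by $\rmd(G)$. From doubling one extracts, at every scale, a cover of $G$ by translates of a fixed ball whose multiplicity is bounded by the number of $r$-balls packed into a comparably larger ball; tracking the radii carefully is what produces the explicit constant $11^{\rmd(G)}$. Freeness of the action then lets me promote such a cover of the group to a cover of $X$ by tubes (local flow boxes modelled on transversal $\times$ ball in $G$): along each orbit one builds a Voronoi-type tiling from a net, and local slices make this depend continuously on a transversal of dimension $\leq \dim X$. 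Finally an Ostrand-type colouring of the transversal directions splits the bounded-multiplicity tube cover into disjoint colour classes, contributing the factor $(\dim X + 1)$ and yielding the displayed tube dimension estimate.

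For the crossed-product estimate I would build completely positive contractive approximations of $\conto(X) \rtimes G$ directly from a tube decomposition realising $\tubedim(G \grpaction{} X)$. A partition of unity subordinate to the tubes maps the algebra, colour by colour, into building blocks; since within one colour the tubes are disjoint and each is a flow box $\cong T \times U$ with $U \subseteq G$ open, the corresponding block is approximately of the form $\conto(T) \otimes \ko(\Ltwo)$ and hence has nuclear dimension at most $\dim X$. Amenability of $G$ supplies approximately invariant (F\o lner) functions with which to average, and this is what makes the downward map approximately multiplicative and the upward maps approximately order zero. The number of colours $\tubedim + 1$ and the per-block bound $\dim X + 1$ then multiply, giving the second displayed inequality.

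The main obstacle I anticipate is the tube construction itself, not the two bookkeeping multiplications. The action is only assumed free, not proper, so there is no global transversal and no honest slice theorem to invoke; one must manufacture flow boxes locally and glue the estimates while keeping the multiplicity constant sharp. Pinning the constant to $11^{\rmd(G)}$ --- rather than to some unspecified function of $\rmd(G)$ --- requires an efficient covering lemma in the polynomial-growth metric together with a colouring argument that is compatible with the group action, and controlling both simultaneously across a finite-dimensional but possibly noncompact and non-metrizable $X$ is where the real work lies.
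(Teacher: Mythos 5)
Your factorisation is exactly the paper's: the geometric bound $\tubedim(G \grpaction{} X) + 1 \leq 11^{\rmd(G)}(\dim X + 1)$ is \cref{thm:covering}, the operator-algebraic bound $\dimnuc(\conto(X) \rtimes G) + 1 \leq (\tubedim(G \grpaction{} X) + 1)(\dim X + 1)$ is \cref{thm:finite-nuclear-dimension}, and the ingredients you name --- a packing argument in the group producing $11^{\rmd(G)}$ (\cref{prop:cover_by_translates}), F{\o}lner averaging to obtain almost-invariant partitions of unity, and blocks of the form $\cK \otimes \conto(S)$ coming from flow boxes, assembled via the Hirshberg--Wu bookkeeping lemma (\cref{lem:nuclear_lemma}) --- are precisely those used in \cref{sec:box-dimension,sec:partitions,sec:nuclear-dimension}. (The paper obtains the factor $\dim X + 1$ in the covering theorem by a Kasprowski--R{\"u}ping style induction on small inductive dimension rather than an Ostrand colouring, but that is a difference of bookkeeping, not of substance.)

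The genuine gap is the step you explicitly defer: the existence of slices. Your proposed construction is circular --- you would build flow boxes along orbits from a Voronoi net \emph{using local slices}, but local slices are exactly the objects whose existence is in question; for an action that is merely free, not proper, no soft argument of this kind works, because nothing guarantees that transverse data chosen on one orbit varies continuously to nearby orbits (that continuity \emph{is} the local triviality one is trying to prove). The paper's resolution, which is its main technical innovation, is \cref{thm:K-slices-exist-matrix-groups} together with \cref{cor:K-slices-exist-polynomial-growth}: freeness is used to construct a continuous map $F \colon X \to \rM_n(\RR)$ which is equivariant for all group elements in a prescribed compact set (\cref{lem:map-to-vectorspace-equivariant}, by averaging a bump function whose support is controlled by \cref{lem:large-identity-neighbourhoods}); since $G$ embeds as a closed subgroup of $\GL(n,\RR)$, its linear action is proper at the vector $v = 1 \in \rM_n(\RR)$, so Palais' slice theorem yields a $G$-slice there, which pulls back through $F$ to a $K$-slice in $X$. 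The general connected polynomial-growth case (such groups need not be matrix groups) is then reduced to the matrix case via Losert's structure theory --- the quotient by a maximal compact normal subgroup $H$ is a matrix Lie group --- combined with Mostow's slice theorem for the compact group $H$. Without this input, neither of your two inequalities has any tubes to run on. A minor further point: your concern about non-metrizable $X$ is moot, since second countability of $X$ is assumed, and needed, throughout the body of the paper (already for the dimension theory and for the slice theorem), so the statement implicitly carries this hypothesis.
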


An important motivation for the present work is the application to groupoid C*-algebras associated with point sets in groups of polynomial growth. In Euclidean spaces, dynamical properties of point sets and tilings are studied in the field of aperiodic order \cite{baakegrimm13} which has been motivated by the discovery of quasicrystals in the 1980s. Bellissard and Kellendonk \cite{bellissard86,bellissard92,kellendonk95} constructed groupoids from such point sets and tilings, which have been studied from the point of view of operator algebras, noncommutative geometry and mathematical physics. Of particular interest to us are model sets (or cut-and-project sets) which were introduced by Meyer \cite{meyer1972}. More recently model sets and other point sets have been introduced and studied in the setting of non-abelian locally compact groups in \cite{bjorklundhartnickpgorzelski2018, bjorklundhartnickpogorzelski2021, bjorklundhartnickpgorzelski2022,hrushovski2012-stable-group-theory, bjorklundhartnick2018, machado2023-higher-rank}. In light of this development, the first and third named author recently extended the definition of the previously studied transversal groupoid of a point set in a Euclidean space to the locally compact group setting in \cite{enstadraum2022}. In particular, to a Delone set $\Lambda$ in a locally compact group $G$ one can associate an {\'e}tale groupoid $\cG(\Lambda)$ which arises as the restriction of the transformation groupoid of the so-called hull dynamical system $G \curvearrowright \Omega(\Lambda)$ to a canonical transversal $\Omega_0(\Lambda)$.  When $\Lambda$ is a discrete subgroup $\Omega_0(\Lambda)$ is a one-point space and the groupoid $\cG(\Lambda)$ agrees with $\Lambda$ as an abstract group.  Hence the groupoid $\Cstar$-algebra $\Cstar(\Lambda) := \Cstar(\cG(\Lambda))$ generalises the group $\Cstar$-algebra of a discrete group.

As $\Cstar(\Lambda)$ is Morita equivalent to a crossed product C*-algebra, the nuclear dimension estimates obtained in \Cref{thm:intro-nuclear-dimension-estimate} can be used to prove that it is classifiable. For an explanation of the terminology we refer the reader to \Cref{sec:classifiable-point-sets}.
\begin{theorem}
  \label{thm:intro-classifiable-point-sets}
  Let $\Lambda$ be a repetitive, aperiodic, FLC Delone set in a connected Lie group of polynomial growth. Then its C*-algebra $\Cstar(\Lambda)$ is classifiable by the Elliott invariant.
\end{theorem}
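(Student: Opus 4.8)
\section*{Proof proposal}

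The plan is to verify the six defining conditions of classifiability for $\Cstar(\Lambda)$ by transporting them from the crossed product attached to the hull dynamical system. The starting point is the Morita equivalence recorded in \cite{enstadraum2022}: since $\cG(\Lambda)$ is the restriction of the transformation groupoid of the hull system $G \grpaction{} \Omega(\Lambda)$ to the transversal $\Omega_0(\Lambda)$, and this transversal meets every orbit, the groupoid algebra $\Cstar(\Lambda)$ is Morita equivalent to $\cont(\Omega(\Lambda)) \rtimes G$. Simplicity, separability, non-elementariness, the UCT and finiteness of nuclear dimension are all invariant under Morita equivalence of separable C*-algebras (using that strongly Morita equivalent separable C*-algebras are stably isomorphic, and that $\dimnuc$ is stable), so it suffices to establish these for the crossed product, together with unitality of $\Cstar(\Lambda)$ itself, which is \emph{not} a Morita invariant and must be argued separately.

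First I would extract the dynamical consequences of the three hypotheses on $\Lambda$. The FLC condition makes the hull $\Omega(\Lambda)$ a compact space carrying a local product structure of the form (transversal) $\times$ (chart of $G$); since the transversal is totally disconnected, this yields $\dim \Omega(\Lambda) = \dim G < \infty$. Repetitivity makes the hull action $G \grpaction{} \Omega(\Lambda)$ minimal. Finally, aperiodicity, in the presence of repetitivity, upgrades minimality to freeness: a nontrivial stabiliser at one point of the hull would, by density of orbits, transport to a period of $\Lambda$ itself, contradicting aperiodicity.

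With freeness and finite-dimensionality of the compact hull in hand, the decisive input is \Cref{thm:intro-nuclear-dimension-estimate} applied to $G \grpaction{} \Omega(\Lambda)$: it gives $\dimnuc(\cont(\Omega(\Lambda)) \rtimes G) \leq 11^{\rmd(G)} \cdot (\dim G + 1)^2 - 1 < \infty$, hence finite nuclear dimension of $\Cstar(\Lambda)$. The remaining properties follow from standard principles. As $G$ is a connected Lie group it is second countable and $\Omega(\Lambda)$ is compact metrizable, so the crossed product is separable. Since $G$ has polynomial growth it is amenable, so $\cont(\Omega(\Lambda)) \rtimes G$ coincides with its reduced version; minimality together with freeness then makes it simple, and the same amenability places it in the UCT class, being a crossed product of a commutative (hence bootstrap) C*-algebra by an amenable group. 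Unitality of $\Cstar(\Lambda)$ comes from compactness of the unit space $\Omega_0(\Lambda)$ of the étale groupoid $\cG(\Lambda)$. Lastly, aperiodicity makes $\Omega_0(\Lambda)$ an infinite Cantor space, so $\cont(\Omega_0(\Lambda)) \subseteq \Cstar(\Lambda)$ is infinite-dimensional; being unital and infinite-dimensional, $\Cstar(\Lambda)$ is isomorphic neither to a matrix algebra nor to the compacts, so it is non-elementary.

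The main obstacle I anticipate lies not in the bookkeeping above but in the two structural facts feeding \Cref{thm:intro-nuclear-dimension-estimate}: that FLC forces $\dim \Omega(\Lambda)$ to be finite (indeed equal to $\dim G$), and that aperiodicity together with repetitivity forces the hull action to be genuinely free rather than merely topologically free. Establishing freeness in the non-abelian Lie group setting, where stabilisers are closed subgroups that must be shown to be trivial uniformly over a minimal hull, is the delicate point; I would handle it by the period-transport argument sketched above, relying on the explicit description of the transversal and the hull topology from the groupoid construction of \cite{enstadraum2022}.
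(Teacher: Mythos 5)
Your overall architecture matches the paper's: the paper likewise passes from $\Cstar(\Lambda)$ to the hull crossed product (its \cref{lem:stable-isomorphism}, via groupoid equivalence and stable isomorphism of separable C*-algebras), feeds freeness and the dimension bound $\dim \Omega(\Lambda) \leq \dim G$ (its \cref{prop:FLC-implies-tdlc} and \cref{cor:dimension-punctured-hull}) into \cref{thm:intro-nuclear-dimension-estimate}, and then verifies unitality via compactness of $\Omega_0(\Lambda)$ together with {\'e}taleness, non-elementariness via infiniteness of the groupoid, the UCT via amenability (Tu's theorem), and simplicity via minimality (repetitivity, quoting \cite{beckushartnickpogorzelski2020}) plus principality. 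The only structural difference --- you transport simplicity, the UCT and non-elementariness through the Morita equivalence, whereas the paper establishes them directly for the groupoid $\cG(\Lambda)$ and only transports the nuclear dimension --- is harmless, since all these properties are invariant under stable isomorphism of separable C*-algebras.

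However, there is one genuine gap, and it sits exactly at the point you yourself flag as delicate. You read ``aperiodic'' as saying that $\Lambda$ has no nontrivial periods, i.e.\ that the stabiliser $G_\Lambda = \{g \in G \mid g\Lambda = \Lambda\}$ is trivial, and you propose to upgrade this to freeness of the hull action $G \grpaction{} \Omega(\Lambda)$ by a period-transport argument using density of the orbit of $\Lambda$. In the paper, aperiodicity is \emph{defined} as freeness of the hull action, precisely because this upgrade is not available in the present generality: the equivalence of freeness of the hull action with triviality of $G_\Lambda$ for repetitive sets is a theorem of Baake--Grimm \cite[Proposition 5.5]{baakegrimm13} in the abelian case, and the paper states explicitly that this equivalence ``is not known in general'' and that ``the known proof in the abelian case does not generalise to nilpotent groups.'' So your period-transport argument is not a routine verification to be filled in; it is an open problem for non-abelian Lie groups of polynomial growth. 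Under the paper's definition of aperiodicity your proof closes up (the freeness step becomes vacuous), but as written --- with your stated interpretation of the hypothesis and your proposed argument for freeness --- the key step feeding both \cref{thm:intro-nuclear-dimension-estimate} and the simplicity argument is unjustified.
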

This theorem generalises the work of Ito-Whittaker-Zacharias on tiling C*-algebras associated with repetitive, aperiodic, FLC tilings \cite{itowhittakerzacharias2019-arxiv-v2}, whose approach established almost finiteness of the associated {\'e}tale groupoids and thus Jiang-Su stability of the tiling C*-algebra.  In contrast, our approach establishes a nuclear dimension bound for the C*-algebra $\Cstar(\Lambda)$, only depending on the Lie group under consideration.

The hypotheses of \Cref{thm:intro-classifiable-point-sets} can be verified in particular for model sets arising from irreducible lattices in products of connected nilpotent Lie groups. We refer the reader to Section~\ref{sec:classifiable-point-sets} for natural examples arising from nilpotent Lie algebras over algebraic number fields.
\begin{corollary}
  \label{cor:intro:irreducible-lattice-classifiable}
  Let $G$ be a connected, simply connected nilpotent Lie group and let $\Lambda \subseteq G$ be a regular model set arising from a cut-and-project scheme $(G, H, \Gamma)$ where $H$ is another nilpotent Lie group and $\Gamma$ is an irreducible lattice.  Then $\Cstar(\Lambda)$ is classifiable.
\end{corollary}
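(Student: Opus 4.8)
The plan is to deduce the corollary from \Cref{thm:intro-classifiable-point-sets} by checking that $\Lambda$ is a repetitive, aperiodic, FLC Delone set in a connected Lie group of polynomial growth. The ambient group $G$ is connected, simply connected and nilpotent, hence of polynomial growth, so the growth hypothesis is immediate and everything reduces to verifying the four properties of $\Lambda$. Throughout I would use that the cut-and-project scheme $(G, H, \Gamma)$ consists of a lattice $\Gamma \subseteq G \times H$ for which $\pi_G|_\Gamma$ is injective and $\pi_H(\Gamma)$ is dense in $H$, that $\Lambda = \{\pi_G(\gamma) : \gamma \in \Gamma,\ \pi_H(\gamma) \in W\}$ for the window $W \subseteq H$, and that regularity of the model set means $W$ is relatively compact with nonempty interior and with boundary of Haar measure zero.

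First I would dispatch the Delone and FLC properties, which are standard features of regular model sets and insensitive to the non-abelian setting. Uniform discreteness of $\Lambda$ follows from relative compactness of $W$ together with the discreteness of $\Gamma$ and injectivity of $\pi_G|_\Gamma$; relative density follows from $\interior(W) \neq \emptyset$ together with the density of $\pi_H(\Gamma)$ in $H$. Finite local complexity likewise follows from relative compactness of $W$, since the internal coordinates of the points of $\Lambda$ lying in any bounded region of $G$ are confined to a relatively compact subset of $H$, forcing only finitely many patches up to translation.

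Repetitivity I would obtain from minimality of the hull. For a regular model set the hull dynamical system $G \grpaction{} \Omega(\Lambda)$ is minimal (and uniquely ergodic), and minimality of the hull is equivalent to repetitivity of $\Lambda$. Here regularity, in particular that $W$ agrees with the closure of its interior and has null boundary, is precisely what guarantees that every local patch recurs relatively densely.

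The hard part will be aperiodicity, and this is where irreducibility of $\Gamma$ enters. Suppose $g \in G$ satisfies $g\Lambda = \Lambda$. Since $\Lambda \subseteq \pi_G(\Gamma)$ and $\pi_G(\Gamma)$ is a group, $g \in \pi_G(\Gamma)$, so $g = \pi_G(\gamma_0)$ for a unique $\gamma_0 \in \Gamma$; using regularity to identify $W$ with the closure of the internal projection of $\Lambda$, the period condition translates into $\pi_H(\gamma_0) \cdot W = W$. The stabiliser $\{h \in H : h W = W\}$ is a closed subgroup whose orbit through an interior point of $W$ lies in the compact set $\overline{W}$, hence it is compact; as $H$ is a simply connected nilpotent Lie group it contains no nontrivial compact subgroup, forcing $\pi_H(\gamma_0) = e$ and thus $\gamma_0 \in \Gamma \cap (G \times \{e\})$. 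Irreducibility of $\Gamma$ makes this intersection trivial (equivalently, $\pi_H|_\Gamma$ is injective), so $\gamma_0 = e$ and $g = e$. Having verified all hypotheses, \Cref{thm:intro-classifiable-point-sets} yields that $\Cstar(\Lambda)$ is classifiable by the Elliott invariant.
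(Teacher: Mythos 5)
Your overall strategy --- verify that $\Lambda$ is a repetitive, aperiodic, FLC Delone set and invoke \Cref{thm:intro-classifiable-point-sets} --- is exactly the paper's, and your treatment of the Delone, FLC and repetitivity properties matches the paper's (which cites Bj{\"o}rklund--Hartnick for FLC, and Bj{\"o}rklund--Hartnick--Pogorzelski together with Beckus--Hartnick--Pogorzelski for repetitivity via minimality). The problem is your aperiodicity argument, and the gap there is genuine.

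Aperiodicity is defined in the paper as \emph{freeness of the hull action} $G \grpaction{} \Omega(\Lambda)$, i.e.\ every point $P \in \Omega(\Lambda)$ must have trivial stabiliser. What you prove is only that the single point $\Lambda$ has trivial stabiliser, i.e.\ $G_\Lambda = \{ g \in G \mid g\Lambda = \Lambda \} = \{e\}$. These are not known to be equivalent in this setting: the paper points this out explicitly just before introducing cut-and-project schemes, noting that for repetitive point sets the equivalence of aperiodicity with triviality of $G_\Lambda$ is a theorem of Baake--Grimm \emph{in abelian groups}, and that ``the known proof in the abelian case does not generalise to nilpotent groups.'' Conjugation handles points on the orbit $G\Lambda$, but stabilisers can jump up at limit points of the orbit, and elements of $\Omega(\Lambda)$ need not be translates of $\Lambda$ (nor obviously model sets with translated windows), so your computation with $\pi_H(\gamma_0) \cdot W = W$ does not reach them. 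The paper's proof avoids this by using the $G$-equivariant map $\beta \colon \Omega(\Lambda) \to (G \times H)/\Gamma$ of Bj{\"o}rklund--Hartnick--Pogorzelski: since $G_P \subseteq G_{\beta(P)}$ for every $P$ in the hull, freeness on all of $\Omega(\Lambda)$ reduces to freeness of the $G$-action on the homogeneous space $(G\times H)/\Gamma$, and the latter is a one-line computation --- $g'(g,h)\Gamma = (g,h)\Gamma$ gives $(g^{-1}g'g, e) \in \Gamma$, and injectivity of $\pi_H|_\Gamma$ (irreducibility) forces $g' = e$.

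Two smaller points. First, to kill the window stabiliser you assume $H$ is simply connected so that it has no nontrivial compact subgroups; but the hypothesis only makes $H$ a nilpotent Lie group (it is $G$ that is assumed simply connected), and a non-simply connected nilpotent Lie group such as a torus has plenty of compact subgroups. This detour is in any case unnecessary: the paper's definition of a regular model set already builds in $\mathrm{stab}_H(W) = \{e\}$, so $\pi_H(\gamma_0)W = W$ gives $\pi_H(\gamma_0) = e$ by definition. Second, your recalled definition of regularity (relatively compact window with nonempty interior and null boundary) omits both this stabiliser condition and the condition $\partial W \cap \pi_H(\Gamma) = \emptyset$, the latter of which you implicitly need when translating $g\Lambda = \Lambda$ into an exact equality of windows.
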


Let us now describe the techniques we use to prove \cref{thm:intro-nuclear-dimension-estimate}.  The results of \cite{hirshbergszabowinterwu2017,hirshbergwu2021} are based on the use of slices. For the purposes of the present paper, a slice for a group action $G \grpaction{} X$ is a compact subset $S$ of $X$ which has the property that for some compact identity neighborhood $K \subseteq G$, the group action $(g,x) \mapsto gx$ is a homeomorphism onto its image when restricted to $K \times S$. The image of this map is called a tube (or a box). This is a local version of slices introduced by Bartels, L{\"u}ck and Reich in \cite{bartelsluckreich2008-covers} as a modification of the global slices introduced in the context of principal bundles, for example by Palais \cite{palais1961-slices}.  For $\RR$-actions on metric spaces, the existence of so-called long, thin covers of tubes was established by Kasprowski and R{\"u}ping in \cite{kasprowskiruping17}.  In \cite{hirshbergwu2021,hirshbergszabowinterwu2017}, these covers are used to construct partitions of unity consisting of Lipschitz functions which are instrumental for the nuclear dimension estimates of the corresponding crossed products.

Our proof follows the strategy to locally trivialise actions by means of tubes as already done in \cite{hirshbergszabowinterwu2017,hirshbergwu2021}. It is however not possible anymore to directly refer to \cite{kasprowskiruping17,bartelsluckreich2008-covers}, where only $\RR$-actions are considered.  Instead we have to develop suitable extensions to connected groups of polynomial growth. Further, the notion of Lipschitz functions is not adapted to non-abelian groups, so that a suitable replacement has to be developed.  In particular we establish the following technical innovations. 
\begin{itemize}
\item We prove in \cref{thm:K-slices-exist-matrix-groups} that every free action of a matrix Lie group $G \grpaction{} X$ admits slices, that is, every point $x \in X$ belongs to the interior of some tube.  This result is extended to connected Lie groups of polynomial growth -- which are not necessarily matrix Lie groups -- in \cref{cor:K-slices-exist-polynomial-growth}.  This step extends the case of $\RR$-actions considered in \cite[Lemma 2.11]{bartelsluckreich2008-covers}.

\item We show in \cref{thm:covering} that if a free action $G \grpaction{} X$ admits slices and $G$ has polynomial growth, then $X$ admits so-called long covers.  That is, for every compact set $K \subseteq G$ there is a cover $\cU$ of $X$ of finite multiplicity (bounded in terms of $\rmd(G)$ and $\dim X$) by interiors of open tubes such that for every $x \in X$, $Kx \subseteq U$ for some $U \in \mathcal{U}$. This implies in particular a bound on the \emph{tube dimension} of $G \grpaction{} X$, a notion we extend from the case of $\RR$-actions in \cite{hirshbergszabowinterwu2017}.
  
\item For $G$ amenable, we prove that a bound on the tube dimension of $G \grpaction{} X$ is equivalent to a number of other statements, cf.\ \Cref{prop:characterisation-tube-dimension}, in particular the existence of partitions of unity consisting of so-called F{\o}lner functions.  This is a modification of the techniques used in \cite{hirshbergszabowinterwu2017,hirshbergwu2021} where Lipschitz partitions of unity are used.
\end{itemize}
Finally, we establish an explicit bound of the nuclear dimension of $\conto(X) \rtimes G$ in terms of the tube dimension of $G \grpaction{} X$ and the dimension of $X$, simplifying some arguments of \cite{hirshbergwu2021} along the way.


\subsection*{Acknowledgements}

The first author acknowledges support from The Research Council of Norway through project 314048. He is also indebted to Stockholm University and the University of Potsdam for their hospitality during extended research stays in 2021--2023. The second and third authors were supported by the Swedish Research Council through grant number 2018-04243.

The third author would like to thank Nigel Higson and Bram Mesland for useful discussions on the K-theory of C*-algebras considered in this work.

\section{Preliminaries}
\label{sec:preliminaries}

In this section we recall some of the objects used throughout the article.  This is also an opportunity to fix notations and conventions. As a general convention, all groups $G$ and spaces $X$ are locally compact Hausdorff unless otherwise specified.

\subsection{Dimension theory}
\label{sec:dimension-theory}

We will need some basic dimension theory for topological spaces. Topological dimension will only be applied to normal, metrisable spaces, and for such spaces the most commonly used notions of dimension coincide. In particular, the frequently encountered Lebesgue covering dimension equals the small inductive dimension, which is well suited to the arguments in the present work. We denote the dimension of such a topological space $X$ unambiguously by $\dim X$.
\begin{definition}
  \label{def:small_inductive_dimension}
  The \emph{small inductive dimension} of a topological space $X$ is defined recursively by the conditions that
  \begin{enumerate}
  \item $\dim(\emptyset) = -1$,
  \item
    \label{it:small-ind-dim:bound}
    $\dim X \leq k$ if for every $x \in X$ and every open neighbourhood $U$ of $x$ there exists an open neighbourhood $U' \subseteq U$ of $x$ such that $\dim(\partial U') \leq k-1$, and
  \item $\dim X = k$ if $\dim X \leq k$ but it is not the case that $\dim X \leq k-1$.
  \end{enumerate}
\end{definition}

\begin{remark}
  \label{rmk:inductive_dimension_open_regular}
  Note that the open neighbourhood $U'$ in \ref{it:small-ind-dim:bound} of \cref{def:small_inductive_dimension} can be chosen to be regular open, that is, $\overline{U'}^{\circ} = U'$, whenever $U$ is regular open. Indeed, if $U$ is regular open then $x \in U'' := \overline{U'}^{\circ} \subseteq \overline{U}^{\circ} = U$, and since $\partial U'' \subseteq \partial U'$ we obtain $\dim(\partial U'') \leq \dim(\partial U') \leq k-1$. Hence $U'$ may be replaced with the regular open set $U''$.
\end{remark}
We will frequently apply the following properties of dimension.
\begin{proposition}
  \label{prop:dim_properties}
  Let $M$ and $N$ be second countable, normal spaces.
  \begin{enumerate}
  \item
    \label{it:dim:monotone}
    Every subspace $M'$ of $M$ satisfies $\dim(M') \leq \dim(M)$.
  \item
    \label{it:dim:union}
    If $M$ is the countable union of a sequence of closed sets $(F_n)_{n \in \bN}$, each with $\dim(F_n) \leq k$, then $\dim(M) \leq k$.
  \item
    \label{it:dim:products}
    $\dim(M \times N) \leq \dim M + \dim N$.
  \item
    \label{it:dim:covers}
    If $\mathcal{U}$ is an open cover of $M$, then $\dim M = \sup_{U \in \mathcal{U}} \dim U$.
  \end{enumerate}
\end{proposition}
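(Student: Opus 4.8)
The plan is to reduce all four assertions to the classical dimension theory of separable metrisable spaces. Since $M$ and $N$ are second countable and normal — hence regular Hausdorff — the Urysohn metrisation theorem shows that they are separable and metrisable. In this class the small inductive dimension of \cref{def:small_inductive_dimension} agrees with the large inductive dimension and with the Lebesgue covering dimension, so each of \ref{it:dim:monotone}--\ref{it:dim:covers} is an instance of a standard theorem. I would carry out \ref{it:dim:monotone} and \ref{it:dim:covers} by hand and invoke the classical literature (for instance the monographs of Hurewicz--Wallman and of Engelking) for the two genuinely technical inputs, the countable closed sum theorem and the product inequality. The main obstacle is the sum theorem \ref{it:dim:union}: unlike monotonicity it does not yield to a short induction on the inductive definition, and it is the engine driving both \ref{it:dim:products} and \ref{it:dim:covers}.

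For \ref{it:dim:monotone} I would argue by induction on $k = \dim M$, the base case $M = \emptyset$ being trivial. Given a subspace $M' \subseteq M$, a point $x \in M'$, and a relatively open neighbourhood $V = U \cap M'$ with $U$ open in $M$, \cref{def:small_inductive_dimension} supplies an open $U' \subseteq U$ with $x \in U'$ and $\dim(\partial_M U') \leq k-1$. Then $U' \cap M'$ is a relatively open neighbourhood of $x$ inside $V$, and since $U'$ is open one checks that its boundary computed in $M'$ satisfies $\partial_{M'}(U' \cap M') \subseteq (\partial_M U') \cap M'$. Viewing $(\partial_M U') \cap M'$ as a subspace of the space $\partial_M U'$, which has dimension at most $k-1$, the inductive hypothesis gives $\dim(\partial_{M'}(U' \cap M')) \leq k-1$, whence $\dim M' \leq k$.

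Assertions \ref{it:dim:union} and \ref{it:dim:products} I would treat as cited results: \ref{it:dim:union} is the countable closed sum theorem, proved most cleanly through the covering-dimension characterisation of $\dim$ (via extensions of maps into spheres, or via refinements of finite open covers of controlled order), and \ref{it:dim:products} is the product inequality $\dim(M \times N) \leq \dim M + \dim N$ for separable metrisable spaces, obtained by induction on $\dim M + \dim N$ using the inductive definition together with \ref{it:dim:monotone} and \ref{it:dim:union}.

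Finally, \ref{it:dim:covers} follows by assembling the earlier parts. The inequality $\sup_{U \in \mathcal{U}} \dim U \leq \dim M$ is immediate from \ref{it:dim:monotone}, as each $U \in \mathcal{U}$ is a subspace of $M$. For the reverse inequality set $k := \sup_{U \in \mathcal{U}} \dim U$. Using second countability I would pass to a countable subcover $(U_n)_{n \in \bN}$; since $M$ is metrisable every open set is an $F_\sigma$, so each $U_n = \bigcup_{m \in \bN} F_{n,m}$ with $F_{n,m}$ closed in $M$ and contained in $U_n$, whence $\dim F_{n,m} \leq \dim U_n \leq k$ by \ref{it:dim:monotone}. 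As $M = \bigcup_{n,m} F_{n,m}$ is then a countable union of closed sets of dimension at most $k$, the sum theorem \ref{it:dim:union} yields $\dim M \leq k$, completing the argument.
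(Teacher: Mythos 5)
Your proposal is correct, and its overall shape matches the paper's: reduce to the dimension theory of separable metrisable spaces and outsource the hard inputs \ref{it:dim:union} and \ref{it:dim:products} to the literature (the paper cites 1.5.4 and 1.5.16 of Engelking; it also cites 1.1.2 for \ref{it:dim:monotone}, which you instead prove by hand with the standard induction --- your boundary inclusion $\partial_{M'}(U' \cap M') \subseteq (\partial_M U') \cap M'$ and the appeal to the inductive hypothesis inside $\partial_M U'$ are exactly right). The genuine difference is in \ref{it:dim:covers}. The paper uses paracompactness of metrisable spaces to pass to a locally finite refinement $\cV$ of $\cU$ and then invokes a locally finite sum theorem (Theorem 4.1.10 of Engelking, applicable because open sets are $F_\sigma$ in metrisable spaces), finishing with \ref{it:dim:monotone}. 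You instead use second countability to pass to a countable subcover, decompose each member as a countable union of closed sets, bound their dimensions via \ref{it:dim:monotone}, and then apply the countable closed sum theorem \ref{it:dim:union} that is already part of the proposition. Your route is more economical and self-contained, since it recycles \ref{it:dim:union} rather than importing a second, stronger sum theorem; the paper's route has the advantage of not using second countability at all in this step, so it would prove \ref{it:dim:covers} for arbitrary metrisable spaces. Both arguments ultimately rest on the same two facts --- open sets are $F_\sigma$ in metrisable spaces, plus a sum theorem --- so this is a difference of packaging rather than of substance, and either proof is complete.
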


\begin{proof}
Parts \ref{it:dim:monotone}, \ref{it:dim:union} and \ref{it:dim:products} of the proposition are respectively 1.1.2, 1.5.4 and 1.5.16 of \cite{engelking78}. To see \ref{it:dim:covers}, note first that $M$ is metrisable and hence paracompact, so we may choose a locally finite refinement $\cV$ of $\cU$. Since open sets are $F_\sigma$ in a metrisable space, Theorem 4.1.10 of \cite{engelking78} gives that $\dim M = \sup_{V \in \cV} \dim V$. Since each $V \in \cV$ is a subset of some $U \in \cU$, \ref{it:dim:monotone} gives $\dim V \leq \dim U$, so $\dim M = \sup_{U \in \cU} \dim U$.
\end{proof}

\subsection{Nuclear dimension}

\begin{definition}
    The \emph{nuclear dimension} of a $\Cstar$-algebra $A$, denoted by $\dimnuc{A}$, is the least number $n$ such that the following hold: There exists a net $(F_\lambda, \Psi_\lambda, \Phi_\lambda)_{\lambda \in \Lambda}$ where $F_\lambda$ are finite-dimensional $\Cstar$-algebras and $\Psi_\lambda \colon A \to F_\lambda$, $\Phi_\lambda \colon F_\lambda \to A$ with the following properties:
    \begin{enumerate}
        \item $(\Phi_\lambda \circ \Psi_\lambda)(a) \to a$ uniformly on finite subsets of $A$,
        \item $\| \Psi_\lambda \| \leq 1$ for all $\lambda$, and
        \item for each $\lambda \in \Lambda$, $F_\lambda$ decomposes into $n+1$ ideals $F_\lambda = F_\lambda^{(0)} \oplus \cdots \oplus F_\lambda^{(n)}$ such that $\Phi_\lambda |_{F_\lambda^{(i)}}$ is a completely positive order zero map for each $0 \leq i \leq n$.
    \end{enumerate}
\end{definition}
The nuclear dimension generalises the dimension of a topological space in as fas as when $X$ is a metrisable locally compact space, $\dimnuc (\mathrm{C}_0(X))$ agrees with the unambiguous topological dimension of $X$ from \Cref{sec:dimension-theory}.

In the proof of our main result we will employ the following lemma which has already been used in \cite{hirshbergwu2017,hirshbergwu2021} to facilitate estimates of nuclear dimension. 
\begin{lemma}
  [See \mbox{\cite[Lemma 1.2]{hirshbergwu2017}}]
  \label{lem:nuclear_lemma}
  Let $B$ be a separable, nuclear $\Cstar$-algebra and let $B_0$ be a dense subset of the unit ball of $B$. Then $\dimnuc(B) \leq n$ if and only if for every finite subset $F \subseteq B_0$ and every $\veps > 0$ there exist a $\Cstar$-algebra $A = \bigoplus_{l=0}^d A^{(l)}$ and completely positive maps $\Phi = \bigoplus_{l=0}^d \Phi^{(l)} \colon B \to A$, $\Psi = \sum_{l=0}^d \Psi^{(l)} \colon A \to B$ such that
  \begin{enumerate}
  \item $\Phi$ is contractive,
  \item $\Psi^{(l)} = \sum_{k=0}^{d^{(l)}} \Psi^{(l,k)}$ where each $\Psi^{(l,k)} \colon A \to B$ is an order zero contraction,
  \item $\| \Psi(\Phi(x)) - x \| < \veps$ for all $x \in F$, and
  \item $\sum_{l=0}^d (\dimnuc(A^{(l)})+1)(d^{(l)} + 1) - 1 \leq n$.
  \end{enumerate}
\end{lemma}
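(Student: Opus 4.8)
The plan is to prove the two implications separately, with almost all of the work going into the direction that the flexible approximation data forces $\dimnuc(B) \leq n$. The easy direction is essentially a matter of matching definitions: assuming $\dimnuc(B) \leq n$ and given $F, \veps$, I would take the defining net $(F_\lambda, \Psi_\lambda, \Phi_\lambda)$ at an index $\lambda$ large enough that $\|\Phi_\lambda\Psi_\lambda(x) - x\| < \veps$ on $F$, and set $A := F_\lambda = \bigoplus_{l=0}^n F_\lambda^{(l)}$ with $d := n$, $\Phi := \Psi_\lambda$, and (after the standard normalisation making the order zero pieces contractive) $\Psi^{(l)} := \Psi^{(l,0)} := \Phi_\lambda|_{F_\lambda^{(l)}}$, so that $d^{(l)} = 0$. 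Each $A^{(l)} = F_\lambda^{(l)}$ is finite-dimensional, hence $\dimnuc(A^{(l)}) = 0$, and the left-hand side of (iv) is $\sum_{l=0}^{n} 1 \cdot 1 - 1 = n$; the other conditions are immediate.

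For the substantial direction, the idea is to absorb each (possibly infinite-dimensional) $A^{(l)}$ into a genuinely finite-dimensional picture by using its own nuclear-dimension approximation, and to convert the internal sum $\Psi^{(l)} = \sum_k \Psi^{(l,k)}$ into extra direct summands. Write $m^{(l)} := \dimnuc(A^{(l)})$, which is finite because (iv) holds. For each $l$ I would apply the definition of $\dimnuc(A^{(l)}) \leq m^{(l)}$ to the finite set $\Phi^{(l)}(F) \subseteq A^{(l)}$ and a tolerance $\veps'$ to be fixed later, obtaining a finite-dimensional $G^{(l)} = \bigoplus_{j=0}^{m^{(l)}} G^{(l)}_j$, a contractive completely positive $\alpha^{(l)} \colon A^{(l)} \to G^{(l)}$ with components $\alpha^{(l)}_j$, and a completely positive $\beta^{(l)} \colon G^{(l)} \to A^{(l)}$ whose restrictions $\beta^{(l,j)} := \beta^{(l)}|_{G^{(l)}_j}$ are order zero contractions, with $\beta^{(l)}\alpha^{(l)} \approx \id$ on $\Phi^{(l)}(F)$.

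Now I would build the final finite-dimensional algebra $\tilde A := \bigoplus_{l,j,k} G^{(l)}_j$, taking one copy of $G^{(l)}_j$ for each admissible triple $(l,j,k)$ with $0 \leq l \leq d$, $0 \leq j \leq m^{(l)}$, $0 \leq k \leq d^{(l)}$. The forward map $\tilde\Phi \colon B \to \tilde A$ sends $x$ in the $(l,j,k)$-summand to $\alpha^{(l)}_j(\Phi^{(l)}(x))$, independently of $k$; since direct sums carry the maximum norm, replicating over $k$ does not increase the norm and $\tilde\Phi$ stays contractive. On the $(l,j,k)$-summand the backward map is the composite $\Psi^{(l,k)} \circ \beta^{(l,j)}$. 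The one genuinely structural point is that a composition of completely positive order zero maps is again order zero: orthogonal positive elements are sent to orthogonal positive elements at each stage, so each piece is an order zero contraction supported on a single finite-dimensional ideal of $\tilde A$. Hence $\tilde A$ splits into $\sum_l (m^{(l)}+1)(d^{(l)}+1) \leq n+1$ ideals on each of which the backward map is order zero, which is exactly how the product in (iv) ends up counting the order zero pieces of the upgraded system. Finally I would unwind $\tilde\Psi\tilde\Phi(x) = \sum_{l,k} \Psi^{(l,k)}\bigl(\beta^{(l)}\alpha^{(l)}\Phi^{(l)}(x)\bigr) \approx \sum_{l,k} \Psi^{(l,k)}(\Phi^{(l)}(x)) = \Psi\Phi(x) \approx x$ on $F$, the first step using $\beta^{(l)}\alpha^{(l)} \approx \id$ propagated through the contractive $\Psi^{(l)}$ and the second using (iii).

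I expect the main obstacle to be quantitative bookkeeping rather than any deep idea: one must choose the internal tolerances $\veps'$ small enough relative to $\veps$ and to the size of $F$ so that the errors accumulated through the three composed layers of maps stay below $\veps$, while simultaneously confirming that the replicated forward map remains contractive. The only conceptual ingredient is the stability of the order zero property under composition; separability and nuclearity of $B$ enter merely to ensure that the net definition of nuclear dimension may be tested on a single finite set $F$ with a single tolerance, matching the quantified form of the statement.
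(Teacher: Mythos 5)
Your proof is correct and follows essentially the same route as the source the paper relies on for this lemma (the paper itself gives no proof, deferring to Hirshberg--Wu, Lemma 1.2): a trivial ``only if'' direction with $d = n$, $d^{(l)} = 0$ and finite-dimensional summands, and an ``if'' direction that composes the given system with finite-dimensional nuclear-dimension approximations of each $A^{(l)}$, using that completely positive order zero maps are stable under composition so that condition (iv) caps the total number of ideals at $n+1$. The only point to tighten is that both your ``standard normalisation'' and the uniform bound $\|\tilde{\Psi} \circ \tilde{\Phi}\| \leq n+1$ (which you implicitly need to pass from the dense set $B_0$ to arbitrary finite subsets of $B$) rest on contractivity of the order zero pieces, which the standard Winter--Zacharias definition of nuclear dimension includes but the paper's own rendering of the definition omits, so you should quote the former.
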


\subsection{Locally compact groupoids and their C*-algebras}
\label{sec:groupoids}

We recall the definition of a topologial groupoid and its C*-algebras here and refer to \cite{renault80} for a comprehensive introduction to the subject.  A \emph{groupoid} is a set $\cG$ together with a distinguished subset $\cG^{(2)} \subseteq \cG \times \cG$, a multiplication map $\cG^{(2)} \to \cG$, $(\alpha,\beta) \mapsto \alpha \beta$, and an inversion map $\cG \to \cG$, $\alpha \mapsto \alpha^{-1}$, such that the following properties are satisfied.
\begin{enumerate}
\item If $(\alpha,\beta),(\beta,\gamma) \in \cG^{(2)}$, then $(\alpha\beta,\gamma),(\alpha,\beta\gamma) \in \cG^{(2)}$, and $(\alpha\beta)\gamma = \alpha(\beta\gamma)$.
\item $(\alpha,\alpha^{-1}) \in \cG^{(2)}$ for all $\alpha \in \cG$, and for all $(\alpha,\beta) \in \cG^{(2)}$ we have that $\alpha^{-1}(\alpha \beta) = \beta$ and $(\alpha\beta)\beta^{-1} = \alpha$.
\item For all $\alpha \in \cG$ we have that $(\alpha^{-1})^{-1} = \alpha$.
\end{enumerate}
The \emph{unit space} of the groupoid is the set $\cG^{(0)} = \{ \alpha^{-1}\alpha \mid \alpha \in \cG \} = \{ \alpha \alpha^{-1} \mid \alpha \in \cG \}$. The \emph{source} and \emph{range} maps $\rms,\rmr \colon \cG \to \cG^{(0)}$ are given by $\rms(\alpha) = \alpha^{-1}\alpha$ and $\rmr(\alpha) = \alpha \alpha^{-1}$ for $\alpha \in \cG$. We have that
\begin{gather*}
  \cG^{(2)} = \{ (\alpha,\beta) \in \cG \mid \rms(\alpha) = \rmr(\beta) \}
  \eqstop
\end{gather*}
For each $x \in \cG^{(0)}$ we set
\begin{gather*}
  \cG_x = \{ \alpha \in \cG \mid \rms(\alpha) = x \}\eqcomma
  \qquad \text{and} \quad
  \cG^x = \{ \alpha \in \cG \mid \rmr(\alpha) = x \}
  \eqstop
\end{gather*}
A \emph{locally compact groupoid} is a groupoid $\cG$ equipped with a locally compact Hausdorff topology under which the multiplication, inversion, source and range maps are continuous with respect to the relative topologies on $\cG^{(2)}$ and $\cG^{(0)}$. We will assume that all locally compact groupoids are Hausdorff and that range and source maps are open. We say that $\cG$ is \emph{{\'e}tale} if the source map (equivalently range map) is a local homeomorphism when viewed as a map $\cG \to \cG$.

Given a subset $X$ of the unit space $\cG^{(0)}$, the \emph{restriction of $\cG$ to $X$}, denoted by $\cG|_X$, is the subset $\{ \alpha \in \cG \mid \rms(\alpha),\rmr(\alpha) \in X \}$ of $\cG$. This is a groupoid with unit space $X$ where the operations are restricted from $\cG$.

A \emph{Haar system} for a locally compact groupoid $\cG$ is a collection $(\haar^x)_{x \in \cG^{(0)}}$ where $\haar^x$ is a positive regular Borel mesure on $\cG^x$ for each $x \in \cG^{(0)}$, such that the following requirements are satisfied:
\begin{enumerate}
\item The support of $\haar^x$ is equal to $\cG^x$ for each $x \in \cG^{(0)}$.
\item For every $g \in \contc(\cG)$ the function $x \mapsto \int_{\cG^{x}} g(\alpha) \, \rmd{\haar^x(\alpha)}$ belongs to $\contc(\cG^{(0)})$.
\item For every $\alpha \in \cG$ and $g \in \contc(\cG)$ we have that
  \begin{gather*}
    \int_{\cG^{\rms(\alpha)}} f(\alpha\beta) \, \rmd{\haar^{\rms(\alpha)}}(\beta)
    =
    \int_{\cG^{\rmr(\alpha)}} f(\beta) \, \rmd{\haar^{\rmr(\alpha)}(\beta)}
    \eqstop
  \end{gather*}
\end{enumerate}
Given a Haar system $(\haar^x)_{x \in \cG^{(0)}}$ on $\cG$, one defines the convolution of two functions $f,g \in \contc(\cG)$ by the formula
\begin{gather*}
  (f*g)(\alpha)
  =
  \int_{\cG^x} \sigma(\beta,\beta^{-1}\alpha) f(\beta)g(\beta^{-1}\alpha) \, \rmd \haar^{\rmr(\alpha)}(\beta)
  \eqcomma \quad \text{ for all } \alpha \in \cG
  \eqstop
\end{gather*}
One frequently writes the convolution product implicitly.

With respect to the convolution product and the involution given by $f^*(\alpha) = f(\alpha^{-1})$, the space $\contc(G)$ becomes a $*$-algebra. The $I$-norm on $\contc(\cG)$ is given by
\begin{align*}
  \| f \|_I
  =
  \max \Big\{
  \sup_{x \in \cG^{(0)}} \int_{\cG^x}|f(\alpha)| \, \rmd \haar^x(\alpha)\eqcomma
  \sup_{x \in \cG^{(0)}} \int_{\cG^x} |f(\alpha^{-1})| \, \rmd \haar^x(\alpha)
  \Big\}\eqcomma \quad f \in \contc(\cG)\eqstop
\end{align*}
Any $x \in \cG^{(0)}$ determines a corresponding left regular representation $\lreg_x$ of $\contc(\cG)$ on $\Ltwo(\cG^x,\haar^x)$ given by
\begin{gather*}
  \lreg_x(f)\xi = f * \xi, \quad \text{ for all } f \in \contc(\cG), \xi \in \Ltwo(\cG^x,\haar^x)
  \eqstop
\end{gather*}

The \emph{full C*-algebra} of $\cG$, denoted by $\Cstar(\cG)$ (with respect to the given Haar system) is the \mbox{C*-envelope} of $\contc(G)$, while the \emph{reduced C*-algebra}, denoted by $\Cstarred(\cG)$, is the completion of $\contc(G)$ with respect to the norm
\begin{gather*}
  \| f \|_{\mathrm{red}} = \sup_{x \in \cG^{(0)}}\| \lreg_x(f) \|, \qquad f \in \contc(\cG)
  \eqstop
\end{gather*}
All groupoids considered in this work are amenable, so that their full and reduced groupoid \mbox{C*-algebras} are naturally isomorphic.  See for example \cite[Theorem 6.1.4]{anantharamandelarocherenault2000}.

We have the norm inequality
\begin{equation}
  \| f \|_{\mathrm{red}} \leq \| f \|_I, \qquad \text{for all } f \in \contc(\cG)
  \eqstop
\end{equation}
There is an embedding of $\conto(\cG^{(0)})$ into the multiplier algebra $\rM(\Cstarred(\cG))$ of $\Cstarred(\cG)$ such that if $f \in \conto(\cG^{(0)})$ and $a \in \contc(\cG)$ then $fa, af \in \contc(\cG)$, with
\begin{align}
  (fa)(\alpha) &= f(\rmr(\alpha))a(\alpha)\eqcomma \text{ and}\\
  (af)(\alpha) &= a(\alpha)f(\rms(\alpha))\eqstop
\end{align}
For $a \in \Cstarred(\cG)$, the element $faf$ is called the \emph{compression} of $a$ by $f$.  It follows in particular from the above equations that if $f \in \contc(\cG^{(0)})$ is supported in an open subspace $X \subseteq \cG^{(0)}$ and $a \in \Cstarred(\cG)$, then $faf \in \Cstarred(\cG|_X)$.

The following examples will be important later.
\begin{example}[Transformation groupoids]
  \label{ex:transformation_groupoid}
  Let $G$ be a locally compact (Hausdorff) group acting (on the left) on a locally compact Hausdorff space $X$.  We can then associate to this data a corresponding groupoid $\cG = G \ltimes X$, called a \emph{transformation groupoid}.  As a topological space we have $\cG = G \times X$, and the unit space is given by $\{ (e,x) \mid x \in X \} \cong X$. The source and range maps are given by $\rms(g,x) = x$ and $\rmr(g,x) = gx$, the multiplication is given by $(h,gx)(g,x) = (hg,x)$ and the inversion is given by $(g,x)^{-1} = (g^{-1},gx)$.  A natural Haar system on this groupoid comes from the Haar measure $\haar$ on $G$.  Since $\cG^x = \{ (g,g^{-1}x) \mid g \in G \} \cong G$, we can define $\haar^x$ via
  \begin{gather*}
    \int_{\cG^x} f(\alpha) \, \rmd \haar^x(\alpha)
    =
    \int_G f(g,g^{-1}x) \, \rmd \haar(g)\eqcomma \qquad \text{for all } f \in \contc(\cG)
    \eqstop
  \end{gather*}

The full and reduced groupoid C*-algebras of $G \ltimes X$ are isomorphic to the full and reduced crossed products $\conto(X) \rtimes G$ and $\conto(X) \rtimes_{\mathrm{red}} G$ associated to $G \grpaction{} X$, respectively.  For amenable groups $G$, which will be exclusively considered in this work, the transformation groupoid $G \ltimes X$ is amenable by \cite[Example 2.2.14 (1) in connection with Corollary 2.2.10]{anantharamandelarocherenault2000}. Hence also the reduced and the full crossed products are isomorphic.
  
\end{example}

\begin{example}[Pair groupoids]
  \label{ex:pair_groupoid}
  Let $X$ be a locally compact Hausdorff space. The \emph{pair groupoid} $\pair(X)$ of $X$ is defined as a set $\pair(X) = X \times X$ with unit space $X \cong \{ (x,x) \mid x \in X \} \subseteq X \times X$ and with source and range maps given by $\rms(x,y) = y$ and $\rmr(x,y) = x$, respectively. Its multiplication is given by
  \begin{gather*}
    (x,y)(y,z) = (x,z)\eqcomma \quad \text{for all } x,y,z \in X
    \eqstop
  \end{gather*}
  Haar systems on $\cG$ are in one-to-one correspondence with regular Borel measures $\mu$ on $X$ with full support, and the reduced groupoid C*-algebra of $\pair(X)$ is isomorphic to $\mathcal{K}(\Ltwo(X,\mu))$, the compact operators on $\Ltwo(X,\mu)$, see e.g.\  \cite[Theorem 3.1.2]{paterson1999}. When $\Ltwo(X,\mu)$ is separable (e.g.\ when $\mu$ is $\sigma$-finite) then $\mathcal{K}(\Ltwo(X,\mu))$ has nuclear dimension equal to zero.
\end{example}

\begin{example}[Topological spaces]
  \label{ex:top_space_groupoid}
  A degenerate example of a locally groupoid occurs when $\cG = \cG^{(0)}$. In this case $\cG^{(2)} = \cG^{(0)} \times \cG^{(0)}$, with multiplication and inversion trivially given by $x \cdot x = x$ and $x^{-1} = x$ for $x \in \cG$. Conversely, any locally compact space $X$ can be equipped with this groupoid structure. For each $x \in \cG^{(0)}$ we have that $\cG^x = \{ x \}$, so a natural Haar system is given by assigning unit mass to $x$. The corresponding convolution and involution collapse to pointwise operations, and the associated C*-algebras are isomorphic to $\conto(X)$, the continuous functions vanishing at infinity on $X$.
\end{example}

\begin{example}[Product groupoids]
    \label{ex:product_groupoids}
    If $\cG$ and $\cH$ are locally compact groupoids, the Cartesian product $\cG \times \cH$ may be given a locally compact groupoid structure in the evident way. A pair of Haar systems for $\cG$ and $\cH$ gives rise to a Haar system for $\cG \times \cH$ via the corresponding product measures. The full (reduced) groupoid $\Cstar$-algebra of $\cG \times \cH$ is isomorphic to the maximal (spatial) tensor product of the full (reduced) $\Cstar$-algebras of $\cG$ and $\cH$.
\end{example}

\subsection{Polynomial growth}
\label{sec:polynomial-growth}

A locally compact group $G$ is said to be \emph{compactly generated} if it admits a compact generating set, that is, a compact subset $\Omega \subseteq G$ such that $G = \bigcup_{n=0}^\infty (\Omega \cup \Omega^{-1})^n$. We will usually assume $\Omega$ to be symmetric, that is, $\Omega^{-1} = \Omega$. Furthermore, $G$ is said to be of \emph{polynomial growth} if there exists $d \in \NN$ and $C > 0$ such that $m(\Omega^n) \leq C n^d$ for all $n \in \bN$. The definition can be shown to be independent of the chosen generating set. By a fundamental result of Breuillard \cite{breuillard14} there exists an integer $\rmd(G) \geq 0$ depending only on $G$ and a constant $c(\Omega) > 0$ (depending on the normalisation of a Haar measure $\haar$) such that
\begin{equation}
  \label{eq:breuillard}
  \lim_{n \to \infty} \frac{\vol_G(\Omega^n)}{n^{\rmd(G)}} = c(\Omega) \eqstop
\end{equation}

We will need the following covering property for generating sets in polynomial growth groups. The proof is a standard maximal packing argument that we include here for completeness.
\begin{proposition}
  \label{prop:cover_by_translates}
    Suppose $G$ is compactly generated of polynomial growth, say with compact symmetric generating set $\Omega \subseteq G$, and let $a \in \NN$. Then there exists an $N \in \NN$ such that for all $n \geq N$, any translate of $\Omega^{an}$ can be covered by $(a+1)^{\rmd(G)}$ translates of $\Omega^{2n}$.
\end{proposition}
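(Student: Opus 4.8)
The plan is a maximal packing argument: after reducing to the untranslated case by invariance, I would select a maximal packing of translates of $\Omega^n$ inside $\Omega^{an}$, turn disjointness into a cover using the symmetry of $\Omega$, and then extract the sharp constant $(a+1)^{\rmd(G)}$ from Breuillard's asymptotic \eqref{eq:breuillard}. First I would note that by left-invariance it suffices to cover $\Omega^{an}$ itself, since a cover $\Omega^{an} \subseteq \bigcup_i x_i\Omega^{2n}$ yields $g\Omega^{an} \subseteq \bigcup_i gx_i\Omega^{2n}$ for any $g \in G$. Fixing $n$, I would then choose a maximal finite subset $\{x_1,\dots,x_k\} \subseteq \Omega^{an}$ for which the translates $x_1\Omega^n,\dots,x_k\Omega^n$ are pairwise disjoint; maximality is achievable because each such translate has positive Haar measure and all of them lie inside the finite-measure set $\Omega^{(a+1)n}$.

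Next I would verify the covering property $\Omega^{an} \subseteq \bigcup_{i=1}^k x_i\Omega^{2n}$. Given $y \in \Omega^{an}$, maximality forces $y\Omega^n \cap x_i\Omega^n \neq \emptyset$ for some $i$, so $ys = x_i t$ for some $s,t \in \Omega^n$. Since $\Omega$ is symmetric we have $s^{-1} \in \Omega^n$, whence $y = x_i t s^{-1} \in x_i\Omega^n\Omega^n = x_i\Omega^{2n}$. This exhibits $\Omega^{an}$ as a union of $k$ translates of $\Omega^{2n}$, so everything reduces to bounding $k$.

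For the count, the sets $x_i\Omega^n$ are pairwise disjoint and each is contained in $\Omega^{an}\Omega^n = \Omega^{(a+1)n}$; using left-invariance of the Haar measure, $\haar(x_i\Omega^n) = \haar(\Omega^n)$, so
\[
  k \cdot \haar(\Omega^n) = \haar\Big( \bigsqcup_{i=1}^k x_i\Omega^n \Big) \leq \haar(\Omega^{(a+1)n})\eqcomma
\]
giving $k \leq \haar(\Omega^{(a+1)n})/\haar(\Omega^n)$. Rewriting this ratio as $\frac{\haar(\Omega^{(a+1)n})/((a+1)n)^{\rmd(G)}}{\haar(\Omega^n)/n^{\rmd(G)}}\cdot(a+1)^{\rmd(G)}$ and letting $n \to \infty$, Breuillard's limit \eqref{eq:breuillard} shows the fraction tends to $c(\Omega)/c(\Omega) = 1$, so $\haar(\Omega^{(a+1)n})/\haar(\Omega^n) \to (a+1)^{\rmd(G)}$.

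The decisive point is that $(a+1)^{\rmd(G)}$ is an \emph{integer}: there is an $N$ such that for all $n \geq N$ the ratio above is strictly below $(a+1)^{\rmd(G)} + 1$, and since $k$ is a nonnegative integer bounded by this ratio, it follows that $k \leq (a+1)^{\rmd(G)}$, which is exactly the claimed bound. This integrality trick is the main obstacle of the argument: the volume estimate by itself only controls $k$ by the ratio of Haar measures, and the work lies in exploiting that the limiting value is precisely the integer $(a+1)^{\rmd(G)}$ so as to round down to the sharp constant rather than settling for some multiple of it.
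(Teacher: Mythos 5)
Your proof is correct and follows essentially the same route as the paper: a maximal packing of $\Omega^n$-translates inside $\Omega^{an}$, the volume comparison $k\,\haar(\Omega^n)\leq\haar(\Omega^{(a+1)n})$, and Breuillard's asymptotic plus integrality to round the ratio down to $(a+1)^{\rmd(G)}$. The only (cosmetic) differences are that you obtain a maximal packing from the finite cardinality bound rather than via Zorn's lemma, and you phrase the final estimate as a limit instead of the paper's explicit $\veps$--$\delta$ inequalities.
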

\begin{proof}
  Let $\veps > 1$ be such that $\veps(a + 1)^{\rmd(G)} < (a + 1)^{\rmd(G)} + 1$, fix a Haar measure $\haar$ on $G$ with the associated constant $c(\Omega)$ and set $\delta = c(\Omega)\frac{\veps -1}{\veps+1} > 0$.  By formula \eqref{eq:breuillard} there exists $N \in \NN$ such that
  \begin{equation}
     \label{eq:inequalities_breuillard}
     (c(\Omega) - \delta) n^{\rmd(G)} \leq \haar(\Omega^n) \leq (c(\Omega) + \delta) n^{\rmd(G)}
    \eqcomma \qquad \text{ for all } n \geq N \eqstop
  \end{equation}
  Let $n \geq N$ and let $\cS$ be the set of subsets $S \subseteq \Omega^{an}$ such that $g^{-1}g' \notin \Omega^{2n}$ for all $g,g' \in S$ with $g \neq g'$. Note that for every $S \in \cS$ the sets $\{ g \Omega^{n} \mid g \in S \}$ are pairwise disjoint.  Indeed, if $h \in g \Omega^{n} \cap g' \Omega^{n}$ for distinct $g,g' \in S$ then $g^{-1}g' = (h^{-1}g)^{-1}(h^{-1}g') \in \Omega^{n}\Omega^{n} = \Omega^{2n}$, which is a contradiction.

  We order $\cS$ with respect to inclusion. Then every totally ordered subset of $\cS$ has an upper bound (namely the union), hence $\cS$ admits a maximal element $S$ by Zorn's lemma. We claim that $\Omega^{an} \subseteq \bigcup_{g \in S} g \Omega^{2n}$ holds.  Otherwise we can find $h \in \Omega^{an}$ with $h \notin g\Omega^{2n}$ for any $g \in S$, which implies that $S \cup \{ h \} \in \cS$. This contradicts the maximality of $S$, so we conclude that $\Omega^{an}$ can be covered by translates of $\Omega^{2n}$ by elements from $S$.

  It remains to bound the size of $S$. Using the pairwise disjointness of $\{ g\Omega^n \mid g \in S \}$ and the inclusion $\bigcup_{g \in S} g \Omega^{n} \subseteq \Omega^{an}\Omega^n = \Omega^{(a+1)n}$ we have that
  \begin{gather*}
    |S| \haar(\Omega^{n}) = \haar \Big( \bigcup_{g \in S} g \Omega^{n} \Big) \leq \haar(\Omega^{(a+1)n})
    \eqstop
  \end{gather*}

  Applying both bounds in \eqref{eq:inequalities_breuillard} gives
  \begin{gather*}
    |S|
    \leq
    \frac{\haar(\Omega^{(a+1)n})}{\haar(\Omega^{n})}
    \leq
    \frac{(c(\Omega) + \delta)((a+1)n)^{\rmd(G)}}{(c(\Omega) - \delta)n^{\rmd(G)}}
    =
    \veps \cdot (a+1)^{\rmd(G)} < (a+1)^{\rmd(G)} + 1
    \eqcomma
  \end{gather*}
  by the choice of $\veps$.  This implies that $|S| \leq (a+1)^{\rmd(G)}$.
\end{proof}


\section{Slices}
\label{sec:slices}

Slices are used to understand in how far actions of locally compact groups are locally trivial.  In the relevant context of non-compact groups, the concept was introduced by Palais' for proper actions \cite{palais1961-slices}.  In this article we are concerned with free actions, for which we can show the existence of slices for suitable classes of groups in \cref{sec:slices-existence}.  The following definition of a slice is suitable for this setup, while for non-free actions a definition closer to Palais' work is needed.
\begin{definition}
  \label{def:slice}
  Let $G \grpaction{} X$ be an action.  Let $K \subseteq G$ be some compact identity neighbourhood. A \emph{$K$-slice} is a subset $S \subseteq X$ such that the map $K \times S \to X$ given by $(g,x) \mapsto gx$ is injective. The resulting image $KS$ in  $X$ is called a \emph{tube}, and its interior $(KS)^{\circ}$ is called an \emph{open tube}.
\end{definition}

\begin{remark}
  \label{rem:slice-homeomorphic}
  The injectivity of the map $K \times S \to X$ in \Cref{def:slice} implies that it is a homeomorphism onto its image since $K \times S$ is compact and $X$ is Hausdorff. Therefore the tube $KS$ is a compact set in $X$ homeomorphic to $K \times S$.
\end{remark}

\begin{remark}
  \label{rem:slice-disjoint-translates}
  We will frequently use the following facts.  Any closed subset of a $K$-slice is also a $K$-slice, and if $L \subseteq K$ is a compact identity neighbourhood, then every $K$-slice is also an $L$-slice.  Furthermore, $S$ is a $K$-slice if and only if $gS \cap S = \emptyset$ for all $g \in K^{-1}K \setminus \{ e \}$.
\end{remark}

\subsection{Properties of slices}
\label{sec:slices-properties}

In this section we collect some basic properties of slices that will be used in the remainder of the article.  We start by considering a suitable notion of the interior of a slice.
\begin{lemma}
  \label{lem:box-interior-independence}
  Let $K$ and $K'$ be compact identity neighborhoods in $G$ such that the set $S \subseteq X$ is both a $K$-slice and a $K'$-slice. Then
  \begin{gather*}
    S \cap (KS)^{\circ} = S \cap (K'S)^{\circ}
    \eqstop
  \end{gather*}
\end{lemma}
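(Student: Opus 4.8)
The plan is to first reduce to the nested case and then to exploit the homeomorphism provided by \Cref{rem:slice-homeomorphic}. For the reduction, set $L = K \cap K'$. This is again a compact identity neighbourhood (it is closed, hence compact as a subset of $K$, and contains the open set $(\interior K) \cap (\interior K') \ni e$), and by \Cref{rem:slice-disjoint-translates} the set $S$ is an $L$-slice since it is a $K$-slice and $L \subseteq K$. Thus it suffices to prove the following nested statement: \emph{if $L \subseteq K$ are compact identity neighbourhoods and $S$ is a $K$-slice, then $S \cap (KS)^{\circ} = S \cap (LS)^{\circ}$.} Applying this once to $L \subseteq K$ and once to $L \subseteq K'$ yields $S \cap (KS)^{\circ} = S \cap (LS)^{\circ} = S \cap (K'S)^{\circ}$, which is the assertion.

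For the nested statement, the inclusion $S \cap (LS)^{\circ} \subseteq S \cap (KS)^{\circ}$ is immediate, since $LS \subseteq KS$ forces $(LS)^{\circ} \subseteq (KS)^{\circ}$. The content is the reverse inclusion. Let $\phi \colon K \times S \to KS$ denote the homeomorphism onto the tube from \Cref{rem:slice-homeomorphic}, and consider $U := \phi\big((\interior L) \times S\big) = (\interior L) S$. Because $\interior L \subseteq L \subseteq K$, the set $(\interior L) \times S$ is open in $K \times S$, so $U$ is open in the subspace $KS$; that is, $U = KS \cap W$ for some set $W$ open in $X$. Now fix $x_0 \in S \cap (KS)^{\circ}$. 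Since $e \in \interior L$ and $x_0 \in S$, we have $x_0 = \phi(e, x_0) \in U$, and in particular $x_0 \in W$. On the other hand, $x_0 \in (KS)^{\circ}$ furnishes an $X$-open set $V$ with $x_0 \in V \subseteq KS$. Then $V \cap W$ is open in $X$, contains $x_0$, and satisfies $V \cap W \subseteq KS \cap W = U \subseteq LS$, so $x_0 \in (LS)^{\circ}$, as required.

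The main obstacle, and the only genuine subtlety, is that $U$ is a priori only \emph{relatively} open in the tube $KS$, which itself need not be a neighbourhood in $X$; one cannot conclude directly that $U$ is a neighbourhood of $x_0$ in $X$. The role of the hypothesis $x_0 \in (KS)^{\circ}$ is precisely to promote relative openness to honest openness near $x_0$, via the intersection $V \cap W$ with an ambient open set $V$ squeezed inside $KS$. (Equivalently, one could argue with nets: any net $y_i \to x_0$ eventually lies in $KS$, so $\phi^{-1}(y_i) = (g_i, s_i) \to (e, x_0)$ gives $g_i \to e \in \interior L$, whence $y_i = g_i s_i \in LS$ eventually; this shows $LS$ is a neighbourhood of $x_0$.) I expect no further difficulties, as both the reduction and the interior computation are elementary once the homeomorphism of \Cref{rem:slice-homeomorphic} is in hand.
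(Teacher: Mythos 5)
Your proof is correct and takes essentially the same route as the paper: both arguments reduce to the nested case by passing to the intersection $K \cap K'$ (the reverse inclusion being trivial by monotonicity of the interior), and both use the homeomorphism $K \times S \cong KS$ of \Cref{rem:slice-homeomorphic} to exhibit a relatively open, product-type subset of the big tube lying inside the small tube, which is promoted to an ambient open neighbourhood by intersecting with an open witness of $x \in (KS)^{\circ}$. The only cosmetic difference is that the paper shrinks a product basis neighbourhood $U \times V$ of $(e,x)$ inside the preimage of the big tube's interior and replaces $U$ by its intersection with $\mathring{K}$, whereas you work with the fixed set $(\interior L)S$ and intersect with an ambient open set $V \subseteq KS$; the underlying mechanism is identical.
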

\begin{proof}
  Passing to the intersection $K \cap K'$, we may assume that $K \subseteq K'$ and prove that $S \cap (K'S)^{\circ} \subseteq S \cap (KS)^{\circ}$.  Let $x \in S \cap (K'S)^{\circ}$.  Since $(K'S)^{\circ} \subset K'S$ is open and products of open subsets form a basis of the topology of $K' \times S$, there are open subsets $U \subseteq K'$ and $V \subseteq S$ such that $x \in UV \subseteq (K'S)^\circ$.  Since $x \in S$, we find that $e \in U$ and may hence assume without loss of generality that $U \subseteq K$.  Then $x \in UV \subseteq KS$ on the one hand and on the other hand we obtain a inclusions where each term is open in the next one $UV \subseteq (K'S)^\circ \subseteq X$.  So $UV \subseteq X$ is open, which shows that $x \in (KS)^\circ$. 
\end{proof}

\begin{definition}\label{def:box-interior}
  Given a $K$-slice $S$, we define its \emph{tube interior} and its \emph{tube boundary} to be the sets
  \begin{align*}
    \boxint{S} &= S \cap (KS)^{\circ}, \\
    \boxboundary{S} &= S \setminus \boxint{S} ,
  \end{align*}
  respectively.
\end{definition}
By \cref{lem:box-interior-independence} we infer that $\boxint{S}$ and $\boxboundary{S}$ are independent of the compact identity neighborhood $K$.

We will next obtain some basic properties of the tube interior.  For this, the following result is instrumental, which allows to enlarge a tube in the direction of the group.
\begin{lemma}
  \label{lem:extend_slice}
  Assume that $G$ is second-countable. Let $S$ be a $K$-slice for a compact identity neighborhood $K$ in $G$. Then there exists a compact identity neighborhood $L$ with $K \subseteq \mathring{L}$ such that $S$ is an $L$-slice.
\end{lemma}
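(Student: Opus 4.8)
The plan is to phrase everything in terms of the \emph{return set}
\[
  D = \{\, g \in G : gS \cap S \neq \emptyset \,\},
\]
and to exploit that, by \Cref{rem:slice-disjoint-translates}, a compact identity neighbourhood $M$ makes $S$ an $M$-slice if and only if $D \cap M^{-1}M = \{e\}$. Thus the hypothesis says $D \cap K^{-1}K = \{e\}$, and the goal is to produce a compact identity neighbourhood $L$ with $K \subseteq \mathring{L}$ and $D \cap L^{-1}L = \{e\}$. Note also that $e \in D$ and $D = D^{-1}$.

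First I would record two structural facts about $D$. Since $S$ is compact (the standing convention for slices in this paper, cf.\ \Cref{rem:slice-homeomorphic}) and the action is continuous, $D$ is closed: if $g_\alpha \to g$ with $g_\alpha s_\alpha \in S$ for some $s_\alpha \in S$, then a subnet of $(s_\alpha)$ converges to some $s \in S$ by compactness, whence $g_\alpha s_\alpha \to gs \in S$ and $g \in D$. Moreover $e$ is isolated in $D$: the set $\mathring{K}^{-1}\mathring{K}$ is an open neighbourhood of $e$ contained in $K^{-1}K$, and $D \cap K^{-1}K = \{e\}$. Consequently $C := D \setminus \{e\}$ is closed (it equals $D \cap (G \setminus \mathring{K}^{-1}\mathring{K})$) and disjoint from the compact set $K^{-1}K$.

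The heart of the argument is then a uniform thickening of $K$ that keeps $K^{-1}K$ away from $C$, carried out so as to respect the non-commutativity of $G$. I would consider the continuous map $m \colon G \times G \to G$, $m(x,y) = x^{-1}y$; then $m^{-1}(C)$ is closed and disjoint from the compact set $K \times K$. By the standard fact that a compact subset of an open set in a topological group can be moved into that open set by translation by an entire neighbourhood of the identity (applied in $G \times G$ and then shrunk to a product box), there is a symmetric open neighbourhood $O$ of $e$ with $(OK \times OK) \cap m^{-1}(C) = \emptyset$, i.e.\ $(OK)^{-1}(OK) \cap C = \emptyset$. Using local compactness, choose a symmetric open $O' \ni e$ with $\overline{O'}$ compact and $\overline{O'} \subseteq O$, and set $L := \overline{O'}\,K$. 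Then $L$ is a compact identity neighbourhood, $K \subseteq O'K \subseteq \mathring{L}$, and since $O, O'$ are symmetric with $\overline{O'} \subseteq O$ one computes $L^{-1}L = K^{-1}\overline{O'}\,\overline{O'}\,K \subseteq K^{-1}O^2 K = (OK)^{-1}(OK)$, so $L^{-1}L \cap C = \emptyset$. As $e \in D \cap L^{-1}L$, this yields $D \cap L^{-1}L = \{e\}$, and \Cref{rem:slice-disjoint-translates} shows $S$ is an $L$-slice.

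The main obstacle, and the only place compactness of $S$ is genuinely needed, is the closedness of $D$: without it, points of $D \setminus \{e\}$ could accumulate onto $\partial(K^{-1}K)$ and no enlargement of $K$ would remain a slice. The second, more technical, point is the order-sensitive bookkeeping in the thickening step: in a non-abelian $G$ one must thicken $K$ on a single side (here as $\overline{O'}K$) and verify the inclusion $L^{-1}L \subseteq (OK)^{-1}(OK)$ directly, rather than symmetrically fattening $K^{-1}K$. I remark that second-countability is not needed for this topological argument; if one prefers a sequential proof, it supplies (via Struble's theorem) a proper left-invariant metric on $G$, after which one may take $L$ to be a closed metric neighbourhood $\{g : \mathrm{d}(g,K) \leq 1/n\}$ of $K$ and argue by contradiction using a convergent subsequence.
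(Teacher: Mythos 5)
Your proof is correct, and it takes a genuinely different route from the one in the paper. The paper's proof of \cref{lem:extend_slice} is a compactness argument by contradiction: second countability is used to build a descending sequence $(L_n)_{n \in \NN}$ of compact identity neighbourhoods with $K \subseteq \mathring{L_n}$ and $\bigcap_{n} L_n = K$; assuming $S$ is not an $L_n$-slice for any $n$, \cref{rem:slice-disjoint-translates} yields $g_n \in L_n^{-1}L_n \setminus K^{-1}K$ and points $x_n, x_n' \in S$ with $g_n x_n = x_n'$, and after passing to convergent subsequences (using compactness of $S$ and of $L_1^{-1}L_1$) one obtains a limit $g \in \partial(K^{-1}K)$ with $g \neq e$ and $gS \cap S \neq \emptyset$, contradicting the $K$-slice property. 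You instead argue directly: you package the obstruction into the return set $D = \{g \in G : gS \cap S \neq \emptyset\}$, prove $D$ is closed by a net argument (this is exactly where compactness of $S$ enters, playing the role it plays in the paper's subsequence extraction), note that $C = D \setminus \{e\}$ is closed and disjoint from $K^{-1}K$, and then construct $L = \overline{O'}K$ explicitly via the standard uniform-thickening lemma for compact subsets of open sets in topological groups, applied in $G \times G$ to $(x,y) \mapsto x^{-1}y$; your one-sided computation $L^{-1}L = K^{-1}\overline{O'}\,\overline{O'}K \subseteq (OK)^{-1}(OK)$ correctly handles the non-commutativity, and the final appeal to \cref{rem:slice-disjoint-translates} closes the argument. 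The trade-off: the paper's proof is shorter but genuinely consumes second countability (it needs a countable shrinking family and sequential compactness), which costs nothing there since $G$ is second countable throughout the paper; your proof is constructive rather than by contradiction, isolates the relevant structure (closedness of the return set) more explicitly, and establishes the lemma for arbitrary locally compact groups, so that, as you observe, the second-countability hypothesis in the statement is superfluous.
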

\begin{proof}
  Since $G$ is second-countable we can multiply $K$ with elements from a descending sequence of compact neighbourhoods of the identity to find a descending sequence $(L_n)_{n \in \NN}$ of compact sets such that $\mathring{L_n} \supseteq K$ for all $n \in \NN$ and $\bigcap_{n \in \NN} L_n = K$.  Suppose for a contradiction that $S$ is not an $L_n$-slice for any $n \in \NN$. By \Cref{rem:slice-disjoint-translates} we then find $g_n \in L_n^{-1}L_n \setminus K^{-1}K$ and $x_n,x_n' \in S$ such that $g_nx_n = x_n'$ for all $n \in \NN$. Passing to convergent subsequences, we may assume that $x_n \to x$, $x_n' \to x'$ with $x,x' \in S$ and
  \begin{gather*}
    g_n \to g \in \Big( \bigcap_{n \in \NN} L_n^{-1}L_n \Big) \setminus (K^{-1}K)^{\circ} = \partial(K^{-1}K) .
  \end{gather*}
  In particular, $g \neq e$. Continuity of the action then implies $gx' = x$, so $g S \cap S \neq \emptyset$. By \Cref{rem:slice-disjoint-translates} this is a contradiction to the fact that $S$ is a $K$-slice.
\end{proof}
\begin{lemma}
  \label{lem:box_interior}
  Let $K$ be a compact identity neighborhood and let $S$ be a $K$-slice. Then the following identities hold.
  \begin{align}
    (KS)^{\circ} & = \mathring{K} (\boxint{S}) \eqcomma
                   \label{eq:box_interior} \\
    \partial_X(KS) & = (\partial_G K)S \cup K(\boxboundary{S}) \eqstop
                     \label{eq:box_boundary}
  \end{align}
  Furthermore, if $B$ is regular open in $S$ and contained in $\boxint{S}$, then
  \begin{align}
    \boxint{\overline{B}} & = B \eqcomma
                            \label{eq:boxint_open} \\
    \boxboundary{\overline{B}} & = \partial_S B \eqcomma
                                 \label{eq:boxboundary_open} \\
    \partial(K\overline{B}) & = (K\overline{B}) \setminus \mathring{K}B = (\partial_G K)\overline{B} \cup K(\partial_S B) \eqstop
                              \label{eq:several-boundary-expressions}
  \end{align}
\end{lemma}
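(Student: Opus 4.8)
The plan is to reduce all five identities to a single statement about the homeomorphism $\phi \colon K \times S \to KS$, $(g,x) \mapsto gx$ supplied by \Cref{rem:slice-homeomorphic}. Writing $W = \phi^{-1}\big((KS)^{\circ}\big)$, which is open in $K \times S$ since $\phi$ is a homeomorphism and $(KS)^{\circ}$ is open in $KS$, I would first prove the master claim
\begin{gather*}
  W = \mathring{K} \times \boxint{S}
  \eqstop
\end{gather*}
Granting this, \eqref{eq:box_interior} is immediate as $\phi(W) = (KS)^{\circ}$, and $\phi(\mathring K \times \boxint S) = \mathring K(\boxint S)$. For \eqref{eq:box_boundary} I would use that $KS$ is compact, so $\partial_X(KS) = KS \setminus (KS)^{\circ} = \phi\big((K\times S)\setminus W\big)$, and that $(K \times S)\setminus(\mathring K \times \boxint S) = (\partial_G K \times S)\cup(K \times \boxboundary S)$ maps under $\phi$ to $(\partial_G K)S \cup K(\boxboundary S)$, using $\partial_G K = K \setminus \mathring K$ and $\boxboundary S = S \setminus \boxint S$.

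For the inclusion $\mathring K \times \boxint S \subseteq W$ I would fix $g \in \mathring K$ and $x \in \boxint S = S \cap (KS)^{\circ}$. As $x = ex \in (KS)^{\circ}$ and $W$ is open, there is a basic open $U \times V \subseteq W$ with $e \in U$, $x \in V$; intersecting $U$ with $g^{-1}\mathring K$ I may additionally assume $gU \subseteq K$. Then $UV \subseteq (KS)^{\circ}$ is open in $X$, so the homeomorphism $z \mapsto gz$ carries it to the open set $g\cdot(UV) = (gU)V \subseteq KS$, a neighbourhood of $gx$; hence $gx \in (KS)^{\circ}$. The reverse inclusion $W \subseteq \mathring K \times \boxint S$ is the heart of the matter and the main obstacle. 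Given $(g,x)\in W$, continuity of the orbit map $h \mapsto hx$ gives an open $G$-neighbourhood $U'$ of $g$ with $U'x \subseteq (KS)^{\circ}\subseteq KS$. The hard part is to show $g \in \mathring K$: if not, then $g \in \overline{G\setminus K}$, so I can pick a net $h_i \to g$ with $h_i \in U'\setminus K$ and write $h_i x = k_i s_i$ with $k_i \in K$, $s_i \in S$; uniqueness of this decomposition together with $h_i \notin K$ forces $k_i^{-1}h_i \notin K^{-1}K$. Passing to subnets with $k_i \to k$ and $s_i \to s$ (here I use compactness of $K$ and of $S$) and comparing with $h_i x \to gx$ yields $k = g$, $s = x$ by uniqueness, so $k_i^{-1}h_i \to e$. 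This contradicts $k_i^{-1}h_i \notin K^{-1}K$, because $K \subseteq K^{-1}K$ makes $K^{-1}K$ a neighbourhood of $e$. Once $g \in \mathring K$ is established, applying the translation argument of the first inclusion to $g^{-1}$ (shrinking $U$ so that $g^{-1}U \subseteq K$) shows $x = g^{-1}(gx) \in (KS)^{\circ}$, hence $x \in \boxint S$.

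For the statements about a regular open $B \subseteq \boxint S$, I note first that $\overline B$ is a closed subset of the $K$-slice $S$, hence itself a $K$-slice by \Cref{rem:slice-disjoint-translates}, so \eqref{eq:box_interior} and \eqref{eq:box_boundary} already apply to $\overline B$. To prove \eqref{eq:boxint_open}, namely $\boxint{\overline B} = B$, I would argue by two inclusions. If $x \in B$, then intersecting the neighbourhood $V$ from the first inclusion above with the $S$-open set $B$ produces an open $X$-neighbourhood $U(V\cap B) \subseteq K\overline B$ of $x$, so $x \in \boxint{\overline B}$. Conversely, if $x \in \partial_S B = \overline B \setminus B$, then regular openness gives $x \in \overline{S \setminus \overline B}$, so any basic neighbourhood $U \times V$ of $(e,x)$ with $UV \subseteq K\overline B$ contains some $v \in V \cap (S \setminus \overline B)$; then $v = ev \in K\overline B$ contradicts uniqueness of the $\phi$-decomposition, since it would force $v \in \overline B$. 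Hence $\boxint{\overline B} = B$.

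Finally, \eqref{eq:boxboundary_open} follows at once, as $\boxboundary{\overline B} = \overline B \setminus \boxint{\overline B} = \overline B \setminus B = \partial_S B$. The last display \eqref{eq:several-boundary-expressions} is then \eqref{eq:box_boundary} applied to the slice $\overline B$ combined with \eqref{eq:boxboundary_open}, giving $\partial(K\overline B) = (\partial_G K)\overline B \cup K(\partial_S B)$; the middle expression $(K\overline B)\setminus \mathring K B$ comes from $(K\overline B)^{\circ} = \mathring K\,\boxint{\overline B} = \mathring K B$ via \eqref{eq:box_interior} and \eqref{eq:boxint_open}. The only subtlety to keep an eye on is that slices are taken compact (so the net extraction in the hard inclusion is legitimate and $S$ is closed in $X$), and the key elementary fact that $K^{-1}K$ is an identity neighbourhood; everything beyond the inclusion $W \subseteq \mathring K \times \boxint S$ is either a symmetric translation argument or a formal consequence of \eqref{eq:box_interior}–\eqref{eq:box_boundary} applied to $\overline B$.
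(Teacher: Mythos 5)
Your proposal is correct, and while its overall skeleton (two inclusions, basic open boxes, formal deduction of the statements about $B$) parallels the paper's, it differs at the two key technical points. First, for the hard inclusion $(KS)^{\circ} \subseteq \mathring{K}\boxint{S}$ the paper establishes $g \in \mathring{K}$ by invoking \cref{lem:extend_slice} to enlarge $K$ to a compact identity neighbourhood $L$ with $K \subseteq \mathring{L}$ for which $S$ is still a slice, and then showing that the open set $\mathring{L} \cap \{h \mid hy \in (KS)^{\circ}\}$ is contained in $K$; note that \cref{lem:extend_slice} assumes $G$ second countable, an assumption not stated in \cref{lem:box_interior} itself. You instead prove $g \in \mathring{K}$ by a direct net/compactness argument: a net $h_i \to g$ with $h_i \notin K$ gives decompositions $h_ix = k_is_i$ with $k_i^{-1}h_i \notin K^{-1}K$ (by \cref{rem:slice-disjoint-translates}), while compactness of $K$ and $S$ and injectivity of the slice map force $k_i^{-1}h_i \to e$, contradicting the fact that $K^{-1}K \supseteq \mathring{K}$ is an identity neighbourhood. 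This essentially inlines the compactness argument from the paper's proof of \cref{lem:extend_slice}, but with nets in place of sequences, so your proof is self-contained and valid for arbitrary locally compact Hausdorff $G$. Second, you derive \eqref{eq:box_boundary} formally: since the slice map $\phi$ is a bijection, $\partial_X(KS) = \phi\big((K \times S) \setminus \phi^{-1}((KS)^{\circ})\big)$, and the complement of the product $\mathring{K} \times \boxint{S}$ decomposes as $(\partial_G K \times S) \cup (K \times \boxboundary{S})$; the paper instead carries out the corresponding set arithmetic inside $X$, via the auxiliary identity $\mathring{K}\boxint{S} = \mathring{K}S \cap K\boxint{S}$, which again leans on injectivity. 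Your treatment of \eqref{eq:boxint_open}--\eqref{eq:several-boundary-expressions} coincides with the paper's (regular openness gives $x \in \overline{S \setminus \overline{B}}$, then uniqueness of the decomposition yields the contradiction). In summary: the paper's route buys modularity, since \cref{lem:box-interior-independence} and \cref{lem:extend_slice} are reused elsewhere; yours buys a single unifying "master claim" and removes any implicit second-countability hypothesis on $G$.
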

\begin{proof}
  We start by proving \eqref{eq:box_interior} and first show that $\mathring{K}\boxint{S} \subseteq (KS)^{\circ}$.  Let $x \in \mathring{K}\boxint{S}$, say $x=gy$ where $g \in \mathring{K}$ and $y \in \boxint{S}$.  Set $L = K \cap g^{-1}K$ and consider the open set $U = g(LS)^{\circ}$ in $X$.  Then $U \subseteq gLS \subseteq gg^{-1}KS = KS$, so that $U \subseteq (KS)^{\circ}$ follows.  Furthermore, since $L \subseteq K$, we know that $S$ is also an $L$-slice and \cref{lem:box_interior} shows that $y \in \boxint{S} = S \cap (LS)^{\circ}$. Thus $x = gy \in g(LS)^{\circ} = U \subseteq (KS)^{\circ}$.

  Now we show that $(KS)^{\circ} \subseteq \mathring{K}\boxint{S}$.  Let $x \in (KS)^{\circ}$, say $x = gy$ with $g \in K$ and $y \in S$.  Then $g^{-1}K$ is a compact identity neighbourhood and $S$ is a $g^{-1}K$-slice, so by \cref{lem:box_interior} we obtain
  \begin{gather*}
    y
    =
    g^{-1}x \in S \cap g^{-1}(KS)^{\circ}
    =
    S \cap (g^{-1}KS)^{\circ}
    =
    \boxint{S}
    \eqstop
  \end{gather*}
  It remains to argue that $g \in \mathring{K}$.  For this, we apply \cref{lem:extend_slice} to find a compact identity neighborhood $L$ in $G$ with $K \subseteq \mathring{L}$ such that $S$ is an $L$-slice.  By continuity of the group action, the set
  \begin{gather*}
    W = \mathring{L} \cap \{ h \in G \mid hy \in (KS)^{\circ} \}
  \end{gather*}
  is open in $G$ and it is clear that $g \in W$.  We show that $W \subseteq K$.  Indeed, if $h \in W$ then $hy \in (KS)^{\circ}$ so we can write $hy = h'y'$ with $h' \in K$ and $y' \in S$.  Since $h \in L$ and $S$ is an $L$-slice, this forces $h = h'$ and $y = y'$. In particular, we infer that $h \in K$.
  
  We next prove \eqref{eq:box_boundary}.  First observe that
  \begin{gather*}
    \mathring{K}\boxint{S} = \mathring{K}S \cap K\boxint{S}
    \eqstop
  \end{gather*}
  Indeed, the inclusion form left to right is obvious. Conversely, if $x = gy \in \mathring{K}S \cap K\boxint{S}$ for uniquely determined elements $g \in K$ and $y \in S$, it follows from $x \in \mathring{K}S$ that $g \in \mathring{K}$ and from $x \in K\boxint{S}$ that $y \in \boxint{S}$.  Hence $x \in \mathring{K}\boxint{S}$.  

  Applying the equality above and using \eqref{eq:box_interior}, we get
  \begin{align*}
    (\partial_G K)S \cup K(\boxboundary{S})
    & =
      (K\setminus \mathring{K})S \cup K(S\setminus \boxint{S}) \\
    & =
      (KS\setminus \mathring{K}S) \cup (KS\setminus  K\boxint{S})) \\
    & =
      KS\setminus (\mathring{K}S \cap K\boxint{S})) \\
    & =
      KS \setminus \mathring{K}\boxint{S} \\
    & =
      KS \setminus (KS)^{\circ} \\
    & =
      \partial_X (KS)
      \eqstop
  \end{align*}

  Let us next show \eqref{eq:boxint_open}. First we show that $B \subseteq \boxint{\ol{B}}$. Using that $B \subseteq \boxint{S}$ and \eqref{eq:box_interior} we obtain $\mathring{K}B \subseteq \mathring{K}\boxint{S} = (KS)^{\circ}$.  Since $\mathring{K}B$ is open in $KS$ and hence in $(KS)^{\circ}$, it is also open in $X$.  But then $B \subseteq \ol{B} \cap (K\ol{B})^{\circ} = \boxint{\ol{B}}$.

  Now we show that $\boxint{\ol{B}}\subseteq B$. Let $x \in \boxint{\ol{B}}$. Since $(K\ol{B})^{\circ}$ is open in $KS$ and $x \in (K\ol{B})^{\circ}$, we can by definition of tubes find open sets $W \subseteq K$ and $V \subseteq S$ such that $x \in WV \subseteq (K\ol{B})^{\circ}$. The inclusion $WV \subseteq K\ol{B}$ then forces $V \subseteq \ol{B}$, and since $x \in \ol{B} \subseteq S$ we get $x \in V$. Since $x \in V \subseteq \ol{B}$ and $V$ is open in $S$, this means that $x$ lies in the interior of $\ol{B}$ inside $S$.  By assumption the latter equals $B$, so $x \in B$.

  To show \eqref{eq:boxboundary_open}, we use \eqref{eq:boxint_open} and obtain
  \begin{gather*}
    \boxboundary{\ol{B}}
    =
    \ol{B}\setminus\boxint{\ol{B}}
    =
    \ol{B}\setminus B
    =
    \partial_S{B}
    \eqstop
  \end{gather*}

  Let us now show \eqref{eq:several-boundary-expressions}.  Since $\ol{B}$ is a compact subset of $S$, it is a $K$-slice. Hence \eqref{eq:box_boundary} together with \eqref{eq:boxboundary_open} gives that
  \begin{gather*}
    \partial(K\ol{B})
    =
    (\partial_G K)\ol{B} \cup K(\boxboundary{B})
    =
    (\partial_G K)\ol{B} \cup K(\partial_S B)
    \eqstop
  \end{gather*}

  Moreover, the definition of boundary together with \eqref{eq:box_interior} and \eqref{eq:boxboundary_open} gives that
  \begin{gather*}
    \partial(K\ol{B})
    =
    K\ol{B} \setminus (K\ol{B})^{\circ}
    =
    K\ol{B} \setminus K\boxint{\ol{B}}
    =
    K\ol{B} \setminus KB
    \eqstop
  \end{gather*}
  This finishes the proof.
\end{proof}
The next lemma will be frequently used to extract information about the intersection of tubes and slices.
\begin{lemma}
  \label{lem:slice_restriction_injective}
  If $S$ and $S'$ are $K$-slices, then the restriction of the second factor projection $KS \cong K \times S \to S$ to the set $KS \cap S'$ is a homeomorphism onto its image.
\end{lemma}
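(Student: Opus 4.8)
The plan is to treat the three requirements separately: continuity of the restricted map, its injectivity, and continuity of its inverse, with only injectivity requiring real work. To set up, I would record that the second-factor projection is the continuous map $\pi = \mathrm{pr}_S \circ \phi^{-1} \colon KS \to S$, where $\phi \colon K \times S \to KS$, $(g,x) \mapsto gx$, is the homeomorphism supplied by \cref{rem:slice-homeomorphic}. Its restriction to $T := KS \cap S'$ is then continuous for free, so there is nothing to prove there.

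The core of the argument is injectivity of $\pi|_T$. Suppose $p, p' \in T$ have $\pi(p) = \pi(p') =: x \in S$. Writing $p = g_1 x$ and $p' = g_2 x$ with the uniquely determined $g_1, g_2 \in K$, and remembering that we also have $p, p' \in S'$, I would eliminate $x$ via $x = g_1^{-1} p$ to obtain $p' = (g_2 g_1^{-1}) p$. Thus the translate $(g_2 g_1^{-1}) S'$ meets $S'$ in the point $p'$, and the disjointness criterion of \cref{rem:slice-disjoint-translates} for the $K$-slice $S'$ forces $g_2 g_1^{-1} = e$, hence $g_1 = g_2$ and $p = p'$.

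To upgrade this continuous bijection $\pi|_T \colon T \to \pi(T)$ to a homeomorphism onto its image I would appeal to compactness rather than trying to write down the inverse explicitly: the tube $KS$ is compact by \cref{rem:slice-homeomorphic}, and since $S'$ is a compact (hence closed) slice, $T = KS \cap S'$ is a closed subset of $KS$ and therefore compact. A continuous bijection from a compact space onto a subspace of the Hausdorff space $X$ is a closed map, and so $\pi|_T$ is a homeomorphism onto $\pi(T)$.

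The hard part will be the injectivity step, and more precisely the bookkeeping of the group element relating the two points of $S'$. Eliminating $x$ yields $g_2 g_1^{-1} \in K K^{-1}$, whereas \cref{rem:slice-disjoint-translates} is phrased in terms of $K^{-1} K$; matching the two is exactly the delicate point. It is immediate when $K$ is symmetric, since then $K K^{-1} = K^{-1} K$, which is the hypothesis under which the argument closes. (Equivalently, one needs $S'$ to be a $K^{-1}$-slice, and without such symmetry the conclusion can genuinely fail, so this is where the real content of the lemma sits.)
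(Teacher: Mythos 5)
Your proposal coincides with the paper's own proof in both structure and substance: the paper also reduces everything to injectivity, on the grounds that $KS \cap S'$ is compact and $S$ is Hausdorff, and it establishes injectivity by exactly your elimination argument, writing the two points as $g_1x_1, g_2x_2 \in S'$ with $x_1 = x_2$ and invoking the slice property of $S'$.

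The caveat in your closing paragraph is the one point worth dwelling on, and you are right about it: it marks a genuine gap in the lemma as stated, and the paper's proof glosses over it. The paper concludes that $g_1^{-1} = g_2^{-1}$ ``since $S'$ is a $K$-slice'', i.e.\ it applies the injectivity of $K \times S' \to X$ to the pairs $(g_1^{-1}, g_1x_1)$ and $(g_2^{-1}, g_2x_2)$, whose group coordinates lie in $K^{-1}$ rather than in $K$; this is precisely the $KK^{-1}$ versus $K^{-1}K$ mismatch you isolate. Your claim that the statement can genuinely fail for non-symmetric $K$ is also correct. Let $G$ be the $ax+b$ group with $(a,b)(a',b') = (aa', ab'+b)$ and $(a,b)^{-1} = (1/a, -b/a)$, acting on itself by left translation, and take the non-symmetric compact identity neighbourhood $K = \{(a,b) \mid a \in [1/2,2],\, |b| \leq 1\}$. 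The elements $g_1 = (1/2,-1)$ and $g_2 = (2,1/2)$ lie in $K$ and satisfy $g_2g_1^{-1} = (2,1/2)(2,2) = (4,9/2)$. Any element of $K^{-1}K$ with first coordinate $4$ is of the form $(2,d)(2,b) = (4, 2b+d)$ with $|d| \leq 2$ and $|b| \leq 1$, so its second coordinate lies in $[-4,4]$; hence $g_2g_1^{-1} \notin K^{-1}K$, and the same holds for its inverse $g_1g_2^{-1}$ since $K^{-1}K$ is invariant under inversion. By \cref{rem:slice-disjoint-translates} and freeness of the translation action, $S = \{e\}$ and $S' = \{g_1, g_2\}$ are then both $K$-slices, yet $KS \cap S' = \{g_1,g_2\}$ is a two-point set that the projection $KS \to S$ maps onto the single point $e$, so the conclusion of the lemma fails. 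The lemma therefore needs $K = K^{-1}$ (equivalently, in your phrasing, that $S'$ is also a $K^{-1}$-slice) as a hypothesis. This has no downstream consequences for the paper: in its only applications, in Steps 4 and 5 of the proof of Claim~\ref{claim:dimension_reduction}, the lemma is invoked for $K^3$, where $K$ was enlarged at the beginning of the proof of \cref{thm:covering} to a compact symmetric generating set, so symmetry is available there.
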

\begin{proof}
  Since $KS \cap S'$ is compact and $S$ is Hausdorff, it suffices to show that the map is injective. Let $g_1,g_2 \in K$ and $x_1,x_2 \in S$ such that $g_1x_1 , g_2x_2 \in S'$. Suppose that the images of $g_1x_1$ and $g_2x_2$ under the projection $KS \to S$ are equal, that is, $x_1 = x_2$.  We then have that $g_1^{-1}(g_1x_1) = g_2^{-1}(g_2x_2) \in KS'$ and since $S'$ is a $K$-slice, it follows that $g_1^{-1} = g_2^{-1}$. This implies injectivity.
\end{proof}

\subsection{Existence of slices}
\label{sec:slices-existence}

The existence of $G$-slices for actions of $G$ is a classical topic closely related to the early development of the theory of principal bundles and the question about their local triviality.  The ultimate existence result in this direction was obtained by Palais in \cite{palais1961-slices}.  The notion of $K$-slices and thus tubes as used in the present work goes back to the work of Bartels--L{\"u}ck--Reich in which an equivariant version for $\RR$-actions equipped with an additional commuting action of a discrete group was obtained \cite[Lemma 2.11]{bartelsluckreich2008-covers} based on ideas from Palais' work.  For our needs, this approach needs further refinement in order to treat actions of Lie groups and simplifies at the same time since we target free actions and there is no additional action of a discrete group present. The goal of the this subsection will be to show that actions of matrix Lie groups and of connected Lie groups of polynomial growth admit slices in the following sense.
\begin{definition}
  \label{def:admits_slices}
  We say that an action $G \curvearrowright X$ \emph{admits slices} if for every compact identity neighbourhood $K \subseteq G$ and every $x \in X$ there exists a $K$-slice $S \subseteq X$ such that $x \in \boxint{S}$.
\end{definition}

\begin{remark}
Notice that if an action $G \curvearrowright X$ admits slices then it must be free. Indeed, if $gx =x$ for $g \in G$ and $x \in X$, pick some compact identity neighborhood $K \subseteq G$ that contains $g$. If $S$ is a $K$-slice that contains $x$, then the equation $gx = x = ex$ forces $g=e$ by the definition of a $K$-slice.
\end{remark}
The next lemmas provide us with a sufficiently equivariant map from a free $G$-space into a finite dimensional representation, in analogy with Palais' result \cite[Theorem 1.2.7]{palais1961-slices} in the context of proper $G$-spaces.
\begin{lemma}
  \label{lem:large-identity-neighbourhoods}
  Let $K \subseteq G$ be a compact subset.  Then there is a symmetric relatively compact identity neighbourhood $U \subseteq G$ such that $\bigcap_{k \in K} U k$ is an identity neighbourhood.
\end{lemma}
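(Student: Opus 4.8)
The plan is to reduce the assertion about the intersection $\bigcap_{k \in K} Uk$ to a single set containment that is easy to arrange by a symmetrisation argument. The first observation is a purely formal unwinding of quantifiers: a subset $V \subseteq G$ is contained in $\bigcap_{k \in K} Uk$ if and only if $V \subseteq Uk$ for every $k \in K$, which is in turn equivalent to $Vk^{-1} \subseteq U$ for every $k \in K$, i.e.\ to the single inclusion $VK^{-1} \subseteq U$. Hence it suffices to produce a symmetric, relatively compact identity neighbourhood $U$ together with an identity neighbourhood $V$ satisfying $VK^{-1} \subseteq U$; then $\bigcap_{k \in K} Uk$ contains the neighbourhood $V$ of $e$ and is therefore itself an identity neighbourhood.

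To construct such $U$, I would first invoke local compactness of $G$ to choose a symmetric, relatively compact, open identity neighbourhood $V$. Such a set exists since any compact neighbourhood $N$ of $e$ yields the open symmetric relatively compact set $\mathring{N} \cap (\mathring{N})^{-1}$. I then set
\begin{equation*}
  U = V \cup VK^{-1} \cup (VK^{-1})^{-1} .
\end{equation*}
This $U$ is symmetric by construction, and it is an identity neighbourhood because it contains the open set $V \ni e$. It is relatively compact, since $\overline{U} \subseteq \overline{V} \cup \overline{V}K^{-1} \cup K\overline{V}$ is a finite union of products of compact sets, using that $\overline{V}$ and $K$ are compact (and that $(VK^{-1})^{-1} = KV$ because $V$ is symmetric). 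Finally $VK^{-1} \subseteq U$ holds by inspection, so the reduction of the first paragraph applies and $\bigcap_{k \in K} Uk \supseteq V$ is an identity neighbourhood, as required.

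I do not expect a serious obstacle here: the only step demanding care is the bookkeeping in the first paragraph, namely recognising that the requirement that $\bigcap_{k \in K} Uk$ be a neighbourhood of $e$ is equivalent to fitting the ``thickened'' set $VK^{-1}$ inside $U$. Once this equivalence is isolated, compactness of $K$ guarantees that $VK^{-1}$ is relatively compact, so that enlarging $V$ to $U$ stays within the class of relatively compact identity neighbourhoods, while the symmetrisation $(VK^{-1})^{-1}$ adds nothing beyond another relatively compact set and costs nothing for the inclusion $VK^{-1} \subseteq U$.
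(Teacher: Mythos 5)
Your proof is correct and follows essentially the same route as the paper: both arguments thicken a symmetric, relatively compact identity neighbourhood $V$ by (a symmetrisation of) $K$ and observe that the resulting $U$ satisfies $V \subseteq \bigcap_{k \in K} Uk$ --- the paper replaces $K$ by $K \cup K^{-1}$ and sets $U = VK \cup KV$, while you set $U = V \cup VK^{-1} \cup (VK^{-1})^{-1}$. If anything, your version is slightly more careful, since explicitly including $V$ in the union guarantees $e \in U$ even when $K$ stays away from the identity, a point the paper's choice $U = VK \cup KV$ glosses over.
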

\begin{proof}
  Replacing $K$ by $K \cup K^{-1}$, we may assume that $K$ is symmetric.  Take any symmetric, relatively compact identity neighbourhood $V \subseteq G$ and define $U = VK \cup KV$.  Then $U$ is symmetric and since $K$ is symmetric is follows that
  \begin{gather*}
    \bigcap_{k \in K} U k
    \supseteq
    \bigcap_{k \in K} \left ( \bigcup_{k' \in K} V k' \right) k
    \supseteq V
    \eqstop
  \end{gather*}
  Since $V$ was relatively compact, also $U$ is relatively compact.
\end{proof}

\begin{lemma}
  \label{lem:map-to-vectorspace}
  Let $G \grpaction{} X$ be a free action, $V$ a finite dimensional $G$-representation and $v \in V$ a vector with trivial stabiliser in $G$.  For every $x \in X$, every neighbourhood $A$ of $x$ and every relatively compact identity neighbourhood $U \subseteq G$ there is a function $f \in \contc(A, \RR)$ such that $\int_U f(g^{-1}x) gv \, \rmd g= v$.
\end{lemma}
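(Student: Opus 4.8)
The plan is to realise $v$ as an exact value of the linear map
\[
  \Phi \colon \contc(A, \RR) \to V, \qquad \Phi(f) = \int_U f(g^{-1}x)\, gv \, \rmd g ,
\]
by a concentration argument combined with finite-dimensionality. Since $V$ is finite dimensional, $\image \Phi$ is a linear subspace of $V$ and hence closed, so it suffices to exhibit a family $(f_W)$ in $\contc(A,\RR)$ with $\Phi(f_W) \to v$; the existence of an actual $f$ with $\Phi(f) = v$ then follows from $v \in \overline{\image \Phi} = \image \Phi$. The idea is to let $f_W$ concentrate at $x$, normalise the scalar mass of the integrand, and use that $gv$ is close to $v$ on the relevant set of group elements.

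First I would fix a neighbourhood basis $(W)$ of $x$ consisting of open sets with $\overline{W} \subseteq \interior(A)$, and for each $W$ choose $f_W \in \contc(A,\RR)$ with $f_W \geq 0$, $\supp f_W \subseteq \overline{W}$ and $f_W(x) > 0$. The set $\{ g \in U : f_W(g^{-1}x) > 0\}$ is the preimage of the open set $\{f_W > 0\}$ under the continuous map $g \mapsto g^{-1}x$, hence open, and it contains $e$ because $f_W(x) > 0$ and $e$ lies in the interior of the identity neighbourhood $U$. As open subsets of $G$ carry positive Haar measure we get $\int_U f_W(g^{-1}x)\, \rmd g > 0$, and after rescaling by a positive constant I may assume
\[
  \int_U f_W(g^{-1}x)\, \rmd g = 1 .
\]

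The crux of the argument, and the step where freeness of the action enters, is the claim that the supports of $g \mapsto f_W(g^{-1}x)$ shrink to the identity: for every open identity neighbourhood $N$ there is a $W$ with $\{ g \in \overline U : g^{-1}x \in \overline{W} \} \subseteq N$. I would prove this by contradiction using compactness. If it failed, then along a decreasing neighbourhood basis $(W_n)$ of $x$ there would be $g_n \in \overline U \setminus N$ with $g_n^{-1}x \in \overline{W_n}$, so that $g_n^{-1}x \to x$. By compactness of $\overline U$ I could pass to a subsequence $g_n \to g \in \overline U \setminus N$, and continuity of the action would give $g^{-1}x = x$; freeness then forces $g = e \in N$, contradicting $g \notin N$. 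I expect this to be the only genuine obstacle.

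Granting the claim, the conclusion is routine. Since $gv \to v$ as $g \to e$ by continuity of the representation, for any $\veps > 0$ I can pick $N$ with $\|gv - v\| < \veps$ for all $g \in N$, and then a $W$ as above, so that $\|gv - v\| < \veps$ holds on the support of $g \mapsto f_W(g^{-1}x)$. Using the normalisation I obtain
\[
  \| \Phi(f_W) - v \| = \Big\| \int_U f_W(g^{-1}x)(gv - v)\, \rmd g \Big\| \leq \int_U f_W(g^{-1}x)\,\|gv - v\| \, \rmd g \leq \veps .
\]
Hence $\Phi(f_W) \to v$, so $v \in \image \Phi$ and the desired $f$ exists. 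Note that it is freeness of $G \grpaction{} X$ that drives the support-shrinking step, while the trivial-stabiliser hypothesis on $v$ (which in particular ensures $v \neq 0$ unless $G$ is trivial) is not otherwise needed for this statement.
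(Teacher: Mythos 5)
Your proof is correct and takes essentially the same route as the paper: both consider the same linear map into $V$, use finite-dimensionality to conclude that its image is a (closed) subspace, and use freeness together with compactness of $\overline{U}$ to force the density $g \mapsto f(g^{-1}x)$ to be supported in an arbitrarily small identity neighbourhood, whence $gv \approx v$ gives the conclusion (the paper phrases this last step via convex neighbourhoods instead of a norm estimate, which is immaterial). One small caveat: your support-shrinking claim is argued with sequences along a countable decreasing neighbourhood basis at $x$, which presupposes first countability of $X$ and sequential compactness of $\overline{U}$, neither of which is assumed in the statement; running the same compactness argument with nets and subnets removes this restriction (in the paper's applications $X$ is second countable and $G$ is a Lie group, so sequences do suffice there).
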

\begin{proof}
  Consider the linear map $T \colon \contc(A, \RR) \to V$ given by $Tf = \int_U f(g^{-1}x) g v \, \rmd g$.  Since $V$ is finite dimensional and $\im T$ is a vector subspace of $V$, it suffices to show that for all convex neighbourhoods $v \in C$ we have $C \cap \im T \neq \emptyset$.

  Fix a convex neighbourhood $C$ of $v$ as above and let $U_0 \subseteq G$ be an identity neighbourhood satisfying $U_0 v \subseteq C$.  Since $\ol{U}x \subseteq X$ is closed, there is a neighbourhood $B \subseteq A$ of $x$ satisfying $B \cap \ol{U}x \subseteq U_0x$.  By freeness of $G \grpaction{} X$, this implies that for every $g \in U$, the statement $g x \in B$ implies $g \in U_0$.  Let $f \in \contc(A, \RR)$ satisfy $0 \leq f \leq 1$, $f(x) \neq 0$, $\int_U f(g^{-1}x) \, \rmd g = 1$ and $\supp f \subseteq B$. Then by choice of $B$, the map $g \mapsto \mathbb{1}_U(g) f( g^{-1} x)$ is a probability density on $G$ supported in $U_0$.  Since $U_0v \subseteq C$, this implies that $\int_U f(g^{-1}x) gv \, \rmd g \in C$.
\end{proof}
We are now prepared to constructed sufficiently equivariant maps into finite dimensional representations.
\begin{lemma}
  \label{lem:map-to-vectorspace-equivariant}
  Let $G \grpaction{} X$ be a free action  and assume that $X$ is second-countable. Let $V$ be a finite dimensional $G$-representation and $v \in V$ a vector with trivial stabiliser in $G$.  For every relatively compact identity neighbourhood $K \subseteq G$ and for every $x_0 \in X$ there is a neighbourhood $A$ of $x_0$ and a continuous map $F\colon X \to V$ such that $F(x_0) = v$ and $F(k x) = k \vphi(x)$ for all $k \in K$ and all $x \in A$.
\end{lemma}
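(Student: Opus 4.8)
The plan is to build the desired map $F$ by first constructing, via Lemma~\ref{lem:map-to-vectorspace}, a compactly supported scalar function that reconstructs $v$ from an averaging integral, and then ``smearing'' it equivariantly over a neighbourhood. Concretely, I would first enlarge $K$ using \cref{lem:large-identity-neighbourhoods}: choose a symmetric relatively compact identity neighbourhood $U$ such that $W := \bigcap_{k \in K} Uk$ is an identity neighbourhood. The point of this choice is that for every $k \in K$ we then have $W \subseteq Uk$, equivalently $k^{-1} \in W^{-1}U = \ldots$; more usefully, it guarantees that the support of the integrand below, after left-translating by $k \in K$, still falls inside the region $U$ over which we integrate. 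Applying \cref{lem:map-to-vectorspace} with this $U$, the point $x_0$, and a small neighbourhood $A_0$ of $x_0$, I obtain $f \in \contc(A_0, \RR)$ with $\int_U f(g^{-1} x_0)\, g v \, \rmd g = v$.

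Next I would define the candidate map by the equivariant averaging formula
\begin{gather*}
  F(x) = \int_G f(g^{-1} x)\, g v \, \rmd g
  \eqstop
\end{gather*}
Because $f$ has compact support inside $A_0$, for each fixed $x$ the integrand vanishes unless $g^{-1} x \in A_0$, i.e.\ $g$ ranges over a compact set, so $F$ is a well-defined continuous $V$-valued function (continuity follows from dominated convergence together with continuity of the action and of $f$). By construction $F(x_0) = \int_G f(g^{-1} x_0)\, g v\, \rmd g = \int_U f(g^{-1}x_0)\,gv\,\rmd g = v$, since $f$ is supported so that the integrand lives in $U$.

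The heart of the argument, and the step I expect to be the main obstacle, is verifying the \emph{partial equivariance} $F(kx) = k F(x)$ for $k \in K$ and $x$ in a suitable neighbourhood $A \subseteq A_0$ of $x_0$. Formally, substituting $h = k^{-1}g$ gives
\begin{gather*}
  F(kx) = \int_G f(g^{-1} k x)\, g v \, \rmd g = \int_G f(h^{-1} x)\, k h v \, \rmd h = k \int_G f(h^{-1} x)\, h v \, \rmd h = k F(x)
  \eqcomma
\end{gather*}
using left-invariance of Haar measure and linearity of the $G$-action on $V$. This computation is exact on all of $X$; the subtlety is only that one must ensure the integrals converge and that the bookkeeping of supports is honest. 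Here the shrinking to a smaller neighbourhood $A$ enters: I would take $A$ to be a neighbourhood of $x_0$ small enough that $KA \subseteq A_0$-translates remain controlled, guaranteeing that for $x \in A$ and $k \in K$ the relevant integrands are genuinely compactly supported so that the change of variables is valid and $F$ is continuous there. Since $\rmd h$ is a left Haar measure and the substitution $h = k^{-1}g$ is a left translation, no modular function correction appears, which is why the identity is clean. The second-countability of $X$ is used to guarantee the neighbourhoods and supports can be chosen metrisably/compactly, matching the hypotheses of the preceding lemmas.
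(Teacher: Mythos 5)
There is a genuine gap, and it sits exactly at the step you single out as the main obstacle: the formula $F(x) = \int_G f(g^{-1}x)\,gv\,\rmd g$ is not well-defined under the hypotheses of the lemma. Your justification — ``for each fixed $x$ the integrand vanishes unless $g^{-1}x \in A_0$, i.e.\ $g$ ranges over a compact set'' — is false, because the action is only assumed \emph{free}, not \emph{proper}: the return set $\{g \in G \mid g^{-1}x \in \supp f\}$ is closed but need not be compact. This is not a technicality one can shrink away, and it fails in precisely the examples this paper is built for. For the Kronecker flow $\RR \grpaction{} \bT^2$ (free, minimal), or the hull $G \grpaction{} \Omega(\Lambda)$ of a repetitive aperiodic Delone set, the return set of any point to any open set is syndetic; taking a unitary representation $V$ one gets $\int_G |f(g^{-1}x_0)|\,\|gv\|\,\rmd g = \infty$, so the integral diverges already at $x = x_0$. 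For the same reason your verification of $F(x_0) = v$ is unjustified even formally: \cref{lem:map-to-vectorspace} only controls $\int_U f(g^{-1}x_0)\,gv\,\rmd g$, and the support condition $\supp f \subseteq A_0$ constrains $g^{-1}x_0$ in $X$, not $g$ in $G$, so nothing kills the contribution from $G \setminus U$. Shrinking $A$ cannot repair any of this, since the non-compactness lives in the $g$-variable, not the $x$-variable. A structural symptom of the problem: your equivariance computation is ``exact on all of $X$'' and never invokes freeness, yet the conclusion of the lemma forces freeness locally (if $kx_0 = x_0$ for some $k \in K$, then $v = F(kx_0) = kF(x_0) = kv$, contradicting triviality of the stabiliser of $v$), so any correct proof must use freeness in exactly this step.

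The paper resolves the tension the other way around: it defines $F(x) = \int_U f(g^{-1}x)\,gv\,\rmd g$ as an integral over the \emph{fixed compact set} $U$, so well-definedness and continuity are immediate; the price is that the substitution $g \mapsto kg$ no longer closes up, and instead one gets
\begin{gather*}
  F(kx) = kF(x) + \int_{k^{-1}U \setminus U} f(g^{-1}x)\,kgv\,\rmd g - \int_{U \setminus k^{-1}U} f(g^{-1}x)\,kgv\,\rmd g
  \eqstop
\end{gather*}
The real work of the proof is then to kill these two error terms, and this is where freeness and your so-far-unused set $U_0 = \bigcap_{k \in K} Uk$ enter: since the action is free, $x_0 \notin (K\ol{U} \setminus U_0)x_0$, so by compactness one can choose the neighbourhood $A$ with $\supp f \subseteq A$ so small that $gx \notin A$ for all $x \in A$ and $g \in K\ol{U} \setminus U_0$. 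For $g$ in either difference set one checks $g^{-1}$ lies in this bad region, hence $f(g^{-1}x) = 0$ for $x \in A$, and both error integrals vanish, giving $F(kx) = kF(x)$ for $k \in K$, $x \in A$. Your outline becomes correct if you make exactly this switch: integrate over $U$ rather than $G$, and spend the freeness--compactness argument on the error terms rather than on global convergence.
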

\begin{proof}
  By \cref{lem:large-identity-neighbourhoods} there is a symmetric, relatively compact identity neighbourhood $U$ in $G$ such that $U_0 = \bigcap_{k \in K} Uk$ is an identity neighbourhood.  Since $G$ acts freely, we have $x_0 \notin (K \ol{U} \setminus U_0)x_0$.  Hence,
  \begin{gather*}
    \bigcap_{\substack{A \text{ compact}\\ \text{neighbourhood of }x_0}} A \cap (K \ol{U} \setminus U_0)A = \emptyset
    \eqstop
  \end{gather*}
  By compactness, there is a neighbourhood $A$ of $x_0$ such that for any $g \in K \ol{U} \setminus U_0$ and every $x \in A$ we have $gx \notin A$.  Fix such $A$.

  By \cref{lem:map-to-vectorspace}, there is $f \in \contc(A, \RR)$ supported in $A$ such that $\int_U f(g^{-1}x_0) gv \, \rmd g = v$.  Put
  \begin{gather*}
    F(x) = \int_U f(g^{-1}x) gv \, \rmd g
    \eqstop
  \end{gather*}
  For $k \in G$, $x \in X$, left-invariance of the Haar measures shows that
  \begin{align*}
    F(kx)
    & = \int_U f(g^{-1}k^{-1}x) gv \, \rmd g \\
    & = \int_{k^{-1}U} f(g^{-1}x) kgv \, \rmd g \\
    & = k F(x) + \int_{k^{-1}U \setminus U} f(g^{-1}x) kgv \, \rmd g - \int_{U \setminus k^{-1}U} f(g^{-1}x) kgv \, \rmd g
    \eqstop
  \end{align*}
  Let us simplify the last expression.  Recall that $\supp f \subseteq A$ and that the width of $A$ is at most $U_0 = \bigcap_{k \in K} Uk$.  We find that for $x \in A$ and $g \in k^{-1}U \setminus U$, we have in particular $g^{-1} \in KU \setminus U_0$ and hence $g^{-1}x \notin A$.  Similarly, for $g \in U \setminus k^{-1}U$ we have $g^{-1} \notin Uk$, so that $g^{-1}x \notin A$ follows as above.  We infer that $F(kx) = kF(x)$ for all $k \in K$ and all $x \in A$.  
\end{proof}
We are now able to prove the existence of slices in the sense of \Cref{def:admits_slices} for free actions of matrix Lie groups, that is, subgroups of $\GL(n,\RR)$ for some $n \in \bN$.
\begin{theorem}
  \label{thm:K-slices-exist-matrix-groups}
  Let $G$ be a matrix Lie group and $G \grpaction{} X$ be a free action and assume that $X$ is second-countable. Then $G \curvearrowright X$ admits slices.
\end{theorem}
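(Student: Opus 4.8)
The plan is to transport the slice problem from $X$ to the linear representation $V = \Mat{n}{\RR}$ on which $G \subseteq \GL(n,\RR)$ acts by left multiplication, and to build a slice there by hand using the Lie structure. First I fix the data of the representation: the identity matrix $v_0 := 1_n \in V$ has trivial stabiliser, since $gv_0 = g$, and the orbit map $g \mapsto gv_0$ is simply the inclusion $G \hookrightarrow V$, hence a homeomorphism onto its image. Note also that, being a locally compact subgroup of the Hausdorff group $\GL(n,\RR)$, $G$ is closed, hence an embedded Lie subgroup, and the left multiplication action $G \grpaction{} \GL(n,\RR)$ is free.

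Now fix a compact identity neighbourhood $K$ and a point $x_0 \in X$. I would first enlarge $K$ to a compact identity neighbourhood $\tilde K$ with $K^{-1}K \subseteq \tilde K$, and then apply \cref{lem:map-to-vectorspace-equivariant} with this $\tilde K$, the representation $V$ and the vector $v_0$. This produces a neighbourhood $A$ of $x_0$ and a continuous map $F \colon X \to V$ with $F(x_0) = v_0$ and $F(gx) = gF(x)$ for all $g \in \tilde K$ and $x \in A$; securing the equivariance on $\tilde K \supseteq K^{-1}K$ is the reason for the enlargement.

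The geometric heart is the construction of a compact $\tilde K$-slice $\Sigma \subseteq \GL(n,\RR) \subseteq V$ for the left multiplication action with $v_0 \in \boxint{\Sigma}$. I would pick a linear complement $W$ to the Lie algebra $\mathfrak g = T_{v_0}G \subseteq V$ and take $\Sigma$ to be a small closed ball inside the affine transversal $v_0 + W$. The map $\mathfrak g \times W \to \GL(n,\RR)$, $(\xi,\eta) \mapsto \exp(\xi)(v_0 + \eta)$, has invertible differential at the origin, so by the inverse function theorem it is a local diffeomorphism onto a neighbourhood $\mathcal N$ of $v_0$. Arranging the $\exp(\xi)$-factor to lie in $\mathring{\tilde K}$ makes $\mathcal N \subseteq \tilde K \Sigma$, which shows $v_0 \in (\tilde K \Sigma)^\circ$ and hence $v_0 \in \boxint{\Sigma}$. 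That $\Sigma$ is a genuine $\tilde K$-slice -- injectivity of $\tilde K \times \Sigma \to V$ over the whole of the fixed compact set $\tilde K$, not merely near $v_0$ -- must be upgraded from this infinitesimal statement by a compactness argument in the spirit of \cref{lem:extend_slice}: were it to fail, one extracts $g_j \in \tilde K$ and $\sigma_j, \sigma_j' \to v_0$ with $g_j\sigma_j = \sigma_j'$, passes to a convergent subsequence $g_j \to g$, concludes $gv_0 = v_0$ and so $g = v_0$ by freeness, and then contradicts local injectivity near $v_0$. Shrinking the radius of $\Sigma$ makes it a $\tilde K$-slice, and in particular a $K$-slice by \cref{rem:slice-disjoint-translates}. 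This upgrade over all of $\tilde K$, carried out while retaining $v_0$ in the tube interior, is the step I expect to be the main obstacle, since the inverse function theorem controls only a neighbourhood of the identity.

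Finally I would pull $\Sigma$ back. Choosing a compact neighbourhood $C \subseteq A$ of $x_0$, set $S = F^{-1}(\Sigma) \cap C$; this is compact and contains $x_0$. To see $S$ is a $K$-slice, suppose $x, x' \in S$ and $gx = x'$ with $g \in K^{-1}K \setminus \{e\}$; since $x \in C \subseteq A$ and $g \in K^{-1}K \subseteq \tilde K$, equivariance gives $gF(x) = F(x')$ with $F(x), F(x') \in \Sigma$, contradicting that $\Sigma$ is a $\tilde K$-slice. For $x_0 \in \boxint{S} = S \cap (KS)^{\circ}$ it remains to exhibit an open neighbourhood of $x_0$ inside the tube $KS$. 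For $x$ near $x_0$ one has $F(x) \in (K\Sigma)^\circ$ (which contains $F(x_0) = v_0$ as $v_0 \in \boxint{\Sigma}$), so using the homeomorphism $K \times \Sigma \cong K\Sigma$ of \cref{rem:slice-homeomorphic} one factors $F(x) = kw$ continuously with $k \in K$ and $w \in \Sigma$; then $y := k^{-1}x$ satisfies $F(y) = k^{-1}F(x) = w \in \Sigma$ by equivariance, and since $k$ depends continuously on $x$ and equals $e$ at $x_0$, the point $y$ lies in $C$ for all $x$ in a sufficiently small neighbourhood $O_0$ of $x_0$. Thus $x = ky \in KS$ for every $x \in O_0$, giving $x_0 \in (KS)^\circ$ and hence $x_0 \in \boxint{S}$, which is precisely the condition in \cref{def:admits_slices}. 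The remaining delicacy here is the bookkeeping of the nested neighbourhoods $\Sigma \subseteq V$ and $A, C, O_0 \subseteq X$, so that $S$ is at once small enough to be a $K$-slice and large enough to place $x_0$ in its tube interior.
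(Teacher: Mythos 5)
Your argument is correct, and it shares its outer skeleton with the paper's proof: both fix a closed embedding $G \subseteq \GL(n,\RR)$, take the vector $v_0 = 1 \in \rM_n(\RR)$, invoke \cref{lem:map-to-vectorspace-equivariant} to obtain the locally equivariant map $F$, and define the slice in $X$ as the preimage under $F$ of a slice at $v_0$, intersected with a neighbourhood of $x_0$. Where you genuinely diverge is in producing the slice at $v_0$: the paper cites Palais' slice theorem for proper actions \cite[Theorem 2.3.2]{palais1961-slices}, which yields a \emph{global} $G$-slice $S_0$, whereas you build a compact $\tilde K$-slice by hand from the Lie structure --- affine transversal $v_0 + W$ to $\mathfrak{g}$, inverse function theorem applied to $(\xi,\eta) \mapsto \exp(\xi)(v_0+\eta)$, and a compactness argument in the spirit of \cref{lem:extend_slice} to pass from local injectivity to injectivity over all of $\tilde K$ (your extracted elements $g_j$ should be drawn from the compact set $\tilde K^{-1}\tilde K \setminus \{e\}$ as in \cref{rem:slice-disjoint-translates}, but this changes nothing). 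Your route is more elementary and self-contained; the price is exactly the bookkeeping you identified. Because your $\Sigma$ is only a $\tilde K$-slice rather than a $G$-slice, the enlargement $K^{-1}K \subseteq \tilde K$ is genuinely needed to run the disjoint-translates criterion, and the tube-interior verification must route through the continuous factorisation $F(x) = k(x)w(x)$ (together with an implicit appeal to \cref{lem:box-interior-independence} to know $v_0 \in (K\Sigma)^\circ$ and not merely $v_0 \in (\tilde K\Sigma)^\circ$). The paper's Palais-based argument avoids both points: with a $G$-slice, the computation $k_1F(x_1) = k_2F(x_2) \Rightarrow k_1 = k_2$ is immediate for arbitrary $k_1, k_2 \in K$ with no enlargement, and the openness of $L^\circ S_0$ --- which comes for free from the equivariant retraction in Palais' definition of a slice --- gives the neighbourhood $F^{-1}(L^\circ S_0) \cap B \subseteq KS$ directly. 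In effect, you have reproved the local case of Palais' theorem that is needed here by exploiting the linear structure of $\rM_n(\RR)$, where the paper outsources that step.
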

\begin{proof}
  Fix an embedding with closed image $G \subseteq \GL(n, \RR)$ and consider the vector $v = 1 \in \rM_n(\RR) = V$.  By \cref{lem:map-to-vectorspace-equivariant}, there is a neighbourhood $A \subseteq X$ of $x$ and a continuous map $F\colon X \to V$ such that $F(x) = v$ and $F(ky) = kF(y)$ for all $k \in K$ and $y \in A$.  Since $G \subseteq \GL(n, \RR)$ is closed, its action on $V$ is proper, so that \cite[Theorem 2.3.2]{palais1961-slices} shows the existence of a $G$-slice $S_0$ at $v$.  Put $S = F^{-1}(S_0) \cap A$.  Then $x \in S$, since $F(x) = v$.  We have to show that $S$ is a $K$-slice and then that that $x \in \boxint{S}$.
  
  Let $(k_1,x_1), (k_2,x_2) \in K \times S$.  If $k_1x_1 = k_2x_2$, then
  \begin{gather*}
    k_1F(x_1) = F(k_1x_1) = F(k_2x_2) = k_2F(x_2)
    \eqcomma
  \end{gather*}
  implying that $k_1 = k_2$, since $S_0$ is a $G$-slice.  So $x_1 = x_2$ follows, proving injectivity of the map $K \times S \to KS$.  Since it is also continuous and $KS$ is compact, it is a homeomorphism, and we can conclude that $S$ is a $K$-slice.

  Now fix a neighbourhood $B \subseteq A$ of $x$ and a symmetric identity neighbourhood $L$ such that $L^2 \subseteq K$ and $L \cdot B \subseteq A$.  We claim that the neighbourhood $F^{-1}(L^\circ S_0) \cap B$ of $x$ is contained in $KS$, which will prove that $x \in \boxint{S}$.  Take $y \in F^{-1}(L^\circ S_0) \cap B$.  There is $k \in L$ satisfying $kF(y) \in S_0$.  Since $k \in K$ and $y \in A$, we find that $F(ky) = k F(y) \in S_0$, that is $ky \in F^{-1}(S_0)$.  As we also have $ky \in LB \subseteq A$, we infer that $ky \in S$.   Using the fact that $L$ is symmetric, we now conclude that $y \in L^{-1}S \subseteq KS$, which finishes the proof.
\end{proof}

\begin{corollary}
  \label{cor:K-slices-exist-polynomial-growth}
  Let $G \curvearrowright X$ be a free action of a connected Lie group of polynomial growth and assume that $X$ is second-countable. Then $G \curvearrowright X$ admits slices.
\end{corollary}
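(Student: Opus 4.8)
The plan is to reduce to the matrix group case of \cref{thm:K-slices-exist-matrix-groups}. A connected Lie group of polynomial growth need not be a matrix group -- the universal cover $\wt{E}(2)$ of the Euclidean motion group is the standard example -- so the reduction cannot be immediate. The structural input I would invoke is that every connected Lie group $G$ of polynomial growth admits a discrete central subgroup $Z \leq G$ such that $G/Z$ is a matrix Lie group; equivalently, that $G$ carries a finite dimensional real representation $\rho \colon G \to \GL(n,\RR)$ whose kernel $Z := \ker \rho$ is discrete and central and whose image $H := \rho(G)$ is closed. Securing and correctly citing this structural fact is the main obstacle; everything else is careful bookkeeping of the single new feature, namely that the distinguished vector now has stabiliser $Z$ rather than $\{e\}$.

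Granting this, I would rerun the argument of \cref{thm:K-slices-exist-matrix-groups} with $\rho$ in place of a faithful embedding. Fix $x_0 \in X$ and a compact identity neighbourhood $K$. Put $V = \rM_n(\RR)$ with $G$ acting by $g \cdot A = \rho(g)A$ and take $v = 1 \in V$, so that $\Stab_G(v) = \ker \rho = Z$ while the induced action of the closed matrix group $H \subseteq \GL(n,\RR)$ on $V$ is proper and satisfies $\Stab_H(v) = \{e\}$. Inspecting the proofs of \cref{lem:map-to-vectorspace} and \cref{lem:map-to-vectorspace-equivariant}, one sees that they only use freeness of $G \grpaction{} X$ and never the triviality of the stabiliser of $v$; hence, since $X$ is second-countable, they still produce a neighbourhood $A$ of $x_0$ and a continuous map $F \colon X \to V$ with $F(x_0) = v$ and $F(ky) = \rho(k)F(y)$ for all $k \in K$ and $y \in A$. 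As $H$ acts properly, \cite[Theorem 2.3.2]{palais1961-slices} yields an $H$-slice $S_0$ at $v$, and I set $S = F^{-1}(S_0) \cap A$.

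The place where the nontrivial kernel must be absorbed is the injectivity of $K \times S \to X$. If $k_1 x_1 = k_2 x_2$ with $k_i \in K$ and $x_i \in S$, applying $F$ gives $\rho(k_1)F(x_1) = \rho(k_2)F(x_2)$ with both $F(x_i) \in S_0$, so the $H$-slice property forces $\rho(k_1) = \rho(k_2)$ and $F(x_1) = F(x_2)$. Thus $z := k_2^{-1}k_1 \in Z \cap K^{-1}K$ and $x_2 = zx_1$. Since $Z$ is discrete and $K^{-1}K$ is compact, the set $Z \cap K^{-1}K$ is finite; as the action is free, each nontrivial $z$ in it satisfies $zx_0 \neq x_0$, so after shrinking $A$ to a smaller neighbourhood $N$ of $x_0$ with $zN \cap N = \emptyset$ for all these finitely many $z$ (a finite intersection of open conditions) we force $z = e$, whence $k_1 = k_2$ and $x_1 = x_2$. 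This shows that $S$ is a $K$-slice. Finally, $x_0 \in \boxint{S}$ follows exactly as in \cref{thm:K-slices-exist-matrix-groups}: choosing a symmetric identity neighbourhood $L$ with $L^2 \subseteq K$ and $L \cdot B \subseteq A$, the neighbourhood $F^{-1}(\rho(\mathring{L})S_0) \cap B$ of $x_0$ is contained in $KS$, a purely local step insensitive to the kernel. I expect the genuine difficulty to reside entirely in the structural first step, the $Z$-separation of the free action being the only substantive new point in the adaptation.
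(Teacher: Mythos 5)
Your reduction stands or falls with the structural claim in your first paragraph, and that claim is false: a connected Lie group of polynomial growth need not admit any finite-dimensional representation with discrete kernel, equivalently need not have a discrete central subgroup $Z$ with $G/Z$ a matrix group. The counterexample is the \emph{reduced Heisenberg group} $G = \rH_1/\Gamma$, the quotient of the three-dimensional Heisenberg group by a lattice $\Gamma \cong \ZZ$ in its centre. This $G$ is connected and nilpotent, hence of polynomial growth, so it lies squarely in the scope of \cref{cor:K-slices-exist-polynomial-growth}. Its central circle $\bT = Z(\rH_1)/\Gamma$ coincides with the commutator subgroup $[G,G]$. For any continuous finite-dimensional representation $\rho$ of $G$, Lie's theorem (complexify and triangularise $\rho(G)$) shows that every element of $\rho([G,G])$ is unipotent, while compactness of $\rho(\bT)$ forces every element of $\rho(\bT)$ to be diagonalisable with unimodular eigenvalues; unipotent plus diagonalisable means trivial, so $\rho$ kills the whole circle $\bT$ and its kernel is never discrete. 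Moreover, every discrete central subgroup $Z' \leq G$ is a finite subgroup of $\bT$, and $G/Z'$ is again a reduced Heisenberg group, so no discrete central quotient is a matrix group either. (Incidentally, your proposed standard example $\wt{E}(2)$ is in fact a closed matrix group: sending $(t,z)$ to the pair of affine maps $w \mapsto e^{it}w + z$ of $\CC$ and $s \mapsto s + t$ of $\RR$ gives a faithful homomorphism onto a closed subgroup of $\GL(2,\CC) \times \GL(2,\RR)$. The genuine obstruction to linearity for polynomial-growth groups is a compact central torus inside $[G,G]$, which is exactly what a discrete quotient can never remove.)

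This is why the paper's proof takes a different route, and why your discrete-kernel trick cannot be repaired within its own framework. The paper invokes Losert's results: $G$ has a maximal compact normal subgroup $H$, and the quotient $G/H$, having no nontrivial compact normal subgroups, is a matrix Lie group. The price is that the kernel of the reduction is compact and typically of positive dimension (for the reduced Heisenberg group, $H = \bT$), so your key step -- exploiting that $Z \cap K^{-1}K$ is \emph{finite} and separating finitely many nontrivial translates by freeness -- has no analogue. Instead, the paper applies \cref{thm:K-slices-exist-matrix-groups} to the induced free action $G/H \curvearrowright X/H$ to get a slice $S'$, pulls back to the $H$-invariant set $T = p^{-1}(S')$, and then brings in a genuinely new ingredient, Mostow's slice theorem for actions of compact groups, to produce an $H$-slice $S \subseteq T$ with $x \in \boxint{S}$; the $K$-slice property of $S$ for $G \curvearrowright X$ is then verified by combining the two slice structures with freeness. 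Your bookkeeping after the structural step (the injectivity argument absorbing finitely many kernel elements, and the verification that $x_0 \in \boxint{S}$) is sound as conditional reasoning, but the foundation it rests on does not exist, and the repair is the compact-kernel argument, not the discrete-kernel one.
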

\begin{proof}
  Let $K$ be an identity neighborhood in $G$ and let $x \in X$. By \cite[Theorem 2]{losert2001} $G$ contains a maximal compact subgroup $H$. The quotient group $G/H$ is then a Lie group of polynomial growth containing no nontrivial compact normal subgroups, hence by \cite[Corollary 3.6]{losert2020} $G/H$ is a matrix Lie group. Denote by $\pi \colon G \to G/H$ and $p \colon X \to X/H$ the quotient maps. Since $G/H$ acts freely on $X/H$, we can apply \cref{thm:K-slices-exist-matrix-groups} to get a $\pi(K)$-slice $S' \subseteq G/H$ such that $[x] \in \boxint{S'}$.  Set $T = p^{-1}(S') \subseteq X$.  Then $T$ is $H$-invariant, so we can consider the action $H \curvearrowright T$.  Since $H$ is compact there exists by \cite[Theorem 2.1]{mostow57} an $H$-slice $S \subseteq T$ such that $x \in \boxint{S}$, where the interior is taken in the $H$-space $T$.

  We claim that $S$ is a $K$-slice. To see this, let $g,g' \in K$ and $x,x' \in S$ satisfy $gx=g'x'$. Then $(gH)[x] = (g'H)[x']$, so since $gH,g'H \in \pi(K)$ and $[x],[x'] \in \pi(S) \subseteq \pi(\pi^{-1}(S')) = S'$, we get that $gH = g'H$ and $[x]=[x']$. Let $h_1,h_2 \in H$ be such that $g' = gh_1$ and $x = h_2x'$. Then $gx = gh_1h_2x$, so by freeness of the action $h_1h_2 = e$. But $x = h_2x'$ also implies that $h_2=e$ since $h_2 \in H$ and $x,x' \in S$, so we arrive also at $h_1 = e$. We conclude that $g=g'$. It remains to argue that $x \in \boxint{S}$. Since $S$ is an $H$-slice at $x$ in $T$, there is a subset $U \subseteq HS$ which is open in $T$ and contains $x$. Let $W = U \cap \boxint{T}$.  Then $\mathring{K}W \subset \mathring{K}\boxint{T}$ is relatively open and the latter set is open in $X$. So $\mathring{K}W$ is an open subset of $X$ which contains $x$.
\end{proof}


\section{Long covers and tube dimension}
\label{sec:box-dimension}

The notion of tube dimension was introduced for $\mathbb{R}$-actions in \cite[Definition 8.6]{hirshbergszabowinterwu2017}. Here we extend the notion to actions of arbitrary locally compact groups. Recall that the \emph{multiplicity} of a cover $\cU$ of $X$ is the least number $d$ such that the intersection of any $d+1$ elements of $\cU$ is empty.

\begin{definition}
\label{def:tube-dim}
The \emph{tube dimension} of an action $G \grpaction{} X$, denoted by $\tubedim(G \grpaction{} X)$, is the least natural number $d$ such that for all compact subsets $K \subseteq G$ and $Y \subseteq X$ there is a family $\cU$ of open sets of $X$ satisfying the following properties:
\begin{enumerate}
    \item for all $x \in Y$ there is $U \in \cU$ such that $K x \subseteq U$,
    \item every $U \in \cU$ is contained in a tube, and
    \item the multiplicity of $\cU$ is at most $d + 1$.
\end{enumerate}
If no such natural number $d$ exists, then we define $\tubedim(G \grpaction{} X) = \infty$.
\end{definition}

In the present section we will show that for certain group actions there are covers with controlled multiplicity and arbitrary length, as specified by the group action. This is the content of \cref{thm:covering}, which generalizes \cite[Theorem 5.2]{kasprowskiruping17} and follows the same proof outline. As a consequence, we will obtain explicit bounds on the tube dimension of these actions. Recall the number $\rmd(G)$ associated with a group of polynomial growth $G$ from Section~\ref{sec:polynomial-growth}.

\begin{theorem}
  \label{thm:covering}
  Let $G \grpaction{} X$ be an action of a locally compact group of polynomial growth on a locally compact, second-countable Hausdorff space. Suppose that $G \curvearrowright X$ admits slices as in \Cref{def:admits_slices}. Then for every compact identity neighbourhood $K \subseteq G$ there exists an open cover $\cU$ of $X$ with the following properties.
  \begin{enumerate}
  \item The cover consists of open tubes.
  \item The multiplicity of the cover is at most $11^{\rmd(G)} \cdot (\dim X + 1)$.
  \item For every $x \in X$ there exists $U \in \cU$ such that $K \cdot x \subseteq U$.
  \end{enumerate}
In particular
\[ \tubedim(G \curvearrowright X) \leq 11^{\rmd(G)} \cdot (\dim X + 1) - 1 . \]
\end{theorem}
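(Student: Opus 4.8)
The plan is to construct the cover by hand, following the outline of \cite[Theorem 5.2]{kasprowskiruping17} and substituting the packing estimate of \Cref{prop:cover_by_translates} for the metric packing of the real line used there. Fix a compact symmetric generating identity neighbourhood $\Omega$. Every compact identity neighbourhood is contained in some $\Omega^m$, and enlarging $K$ only strengthens property (iii), so I would assume $K = \Omega^m$ from the outset. I would then apply \Cref{prop:cover_by_translates} with $a = 10$ to fix $N \in \NN$ such that for all $n \geq N$ every translate of $\Omega^{10n}$ is covered by $(10+1)^{\rmd(G)} = 11^{\rmd(G)}$ translates of $\Omega^{2n}$, and I would take $n \geq \max(N,m)$. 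All slices will be $\Omega^{20n}$-slices, which exist through every point because $G \curvearrowright X$ admits slices (\Cref{def:admits_slices}); each such $S$ is then also an $\Omega^{10n}$- and $\Omega^{4n}$-slice with the same tube interior $\boxint{S}$.

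The first point to record is the \emph{long} property. If $x \in (\Omega^{4n}S)^{\circ} = \mathring{\Omega^{4n}}\,\boxint{S}$, write $x = g_0 y$ with $g_0 \in \Omega^{4n}$ and $y \in \boxint{S}$; then $Kx = \Omega^m g_0 y \subseteq \Omega^{m+4n} y \subseteq \mathring{\Omega^{10n}}\,\boxint{S} = (\Omega^{10n}S)^{\circ}$, using $m + 4n \leq 10n$ and \eqref{eq:box_interior} of \Cref{lem:box_interior}. Hence property (iii) will follow automatically once I exhibit an open cover by long tubes $(\Omega^{10n}S_i)^{\circ}$ whose $\Omega^{4n}$-cores $(\Omega^{4n}S_i)^{\circ}$ already cover $X$; property (i) will hold by construction, and the whole content of the theorem is the multiplicity estimate (ii).

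To control the multiplicity I would combine two independent mechanisms, mirroring the transverse and flow directions of the $\RR$-case. Transversally, I would organise the slices into $\dim X + 1$ colour classes by induction on $\dim X$: by \Cref{rmk:inductive_dimension_open_regular} a transversal can be exhausted up to a set of dimension $\le \dim X - 1$ by a disjoint family of regular open pieces $B \subseteq \boxint{S}$ whose boundaries satisfy $\dim(\partial_S B) \leq \dim X - 1$ (\Cref{prop:dim_properties}); passing to the slices $\overline{B}$ via \eqref{eq:boxint_open} keeps these open tubes, the regular open pieces furnish one colour of pairwise transversally disjoint slices, and the lower-dimensional boundaries are distributed among the remaining $\dim X$ colours by the inductive hypothesis. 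Longitudinally, within a single colour I would choose the slices by a maximal packing subject to $\Omega^{4n}S_i \cap S_j = \emptyset$ for distinct $i,j$ (which refines transverse disjointness); maximality forces the $\Omega^{4n}$-cores of that colour to cover its region, while if a point $p$ lay in two long tubes $(\Omega^{10n}S_i)^{\circ},(\Omega^{10n}S_j)^{\circ}$ of the colour, say $p = g_i y_i = g_j y_j$ with $g_i,g_j \in \Omega^{10n}$ and $y_i \in \boxint{S_i}$, then $g_i^{-1}g_j \notin \Omega^{4n}$ by the separation, so by freeness the $g_i$ are distinct and no two lie in a common cell of the chosen covering of $\Omega^{10n}$ by $11^{\rmd(G)}$ translates of $\Omega^{2n}$. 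Thus each colour contributes at most $11^{\rmd(G)}$ to the multiplicity, and summing over the $\dim X + 1$ colours gives the bound $11^{\rmd(G)}(\dim X + 1)$ in (ii).

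Granting (i)--(iii), the tube dimension bound is immediate: for an arbitrary compact $K \subseteq G$ choose $m$ with $K \subseteq \Omega^m$, and the cover just produced witnesses \Cref{def:tube-dim} with $Y = X$ and $d + 1 = 11^{\rmd(G)}(\dim X + 1)$, so $\tubedim(G \curvearrowright X) \leq 11^{\rmd(G)}(\dim X + 1) - 1$. I expect the main obstacle to be the simultaneous coordination of the two mechanisms in the absence of any global product decomposition $X \cong G \times (\text{transversal})$: the inductive-dimension peeling must be organised so that the resulting colour classes consist of transversally disjoint open tubes whose cores still cover $X$, and the longitudinal maximal packing must be run inside each colour in a way compatible with the left action and with the overlaps between slices through different base points. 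Turning the heuristic ``transverse $\times$ flow'' bookkeeping into honest estimates --- in particular verifying via \eqref{eq:boxint_open} and \eqref{eq:boxboundary_open} that shrinking slices to regular open sets preserves both the covering and the long property --- is the technical heart of the argument.
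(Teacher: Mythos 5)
Your setup and bookkeeping reproduce the paper's faithfully: enlarging $K$ to a compact symmetric generating set, invoking \Cref{prop:cover_by_translates} with $a=10$ to obtain the factor $11^{\rmd(G)}$, deducing property (iii) by covering with the cores of longer tubes, and bounding the multiplicity \emph{within one colour class} by combining pairwise separation of slices with the covering of the large ball by $11^{\rmd(G)}$ translates of the small one (this last argument is exactly Step 2 of the paper's Claim~\ref{claim:dimension_reduction}). The gap is in how a colour class is actually produced. You assert that a \emph{maximal} packing of slices subject to $\Omega^{4n}S_i \cap S_j = \emptyset$ ``forces the $\Omega^{4n}$-cores of that colour to cover its region''. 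It does not: maximality only prevents the insertion of a further slice, and a point $x \in \partial(\Omega^{4n}S_i)$ lies in no core, yet every slice through $x$ meets $\Omega^{4n}S_i$, so no admissible slice through $x$ can ever be added. (Without local finiteness one cannot even shrink a candidate slice past infinitely many nearby tubes.) The best a maximal packing can give is covering up to a union of tube boundaries, and by \eqref{eq:box_boundary} these decompose as $(\partial_G \Omega^{4n})S_i \cup \Omega^{4n}(\boxboundary{S_i})$. The ``end cap'' parts $(\partial_G \Omega^{4n})S_i$ are homeomorphic to $\partial_G\Omega^{4n} \times S_i$ and can have dimension as large as $\dim X - 1$ at \emph{every} stage of your induction, since the slices remain transversally full-dimensional no matter how small the set they are meant to cover. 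So while the first colour may leave a leftover of dimension $\leq \dim X - 1$, the second colour must produce a leftover of dimension $\leq \dim X - 2$, which the end caps prevent; the descending induction stalls after one step. This coordination problem is precisely what you defer as the ``technical heart'', and it is not a routine verification.

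The paper resolves it with two ingredients that have no counterpart in your proposal. First, Claim~\ref{claim:locally_finite_cover} constructs a preliminary locally finite cover by slices with a \emph{narrowness} property: for any two slices $S,S'$ in the family, $\{g \in K^3 \mid gS \cap S' \neq \emptyset\} \subseteq h\mathring{K}$ for some $h \in G$. Second, instead of a maximal packing, Claim~\ref{claim:dimension_reduction} uses a recursive subtraction $B_i = U_i \setminus \bigcup_{j \in I_i} K^3\ol{B_j}$, so that each new set swallows everything of $U_i$ not already covered by previously enlarged tubes; the required separation $KB_i \cap KB_j = \emptyset$ then holds by construction, and the uncovered leftover is squeezed into the sets $K^3\ol{B_j} \setminus \mathring{K}^3 B_j = (\partial_G K^3)\ol{B_j} \cup K^3(\partial_{S_j}B_j)$ from \eqref{eq:several-boundary-expressions}. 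The lateral parts $K^3(\partial_{S_j}B_j)$ project homeomorphically into $\partial_{S_j}B_j$ by \Cref{lem:slice_restriction_injective} and are controlled by an induction on boundaries (Step 5); the end caps $(\partial_G K^3)\ol{B_j}$, which are the obstruction described above, are shown to be \emph{disjoint} from all relevant slices in Step 3 --- and this is exactly where the narrowness property of Claim~\ref{claim:locally_finite_cover} is used. Since your proposal contains neither the narrowness property nor the subtraction construction, and its substitute mechanism (maximal packing) cannot deliver the covering-up-to-lower-dimension statement that the colour-by-colour induction needs, the argument as proposed does not close.
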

\begin{proof}
  First, note that once we have proved the theorem for $K$, it automatically holds for any compact identity neighbourhood $K' \subseteq K$, hence we may enlarge $K$ to a compact symmetric generating set for $G$. Using \cref{prop:cover_by_translates} with $a=10$, we can find $N \in \bN$ such that $K^{10N}$ can be covered by $11^{\rmd(G)}$ translates of $K^{2N}$. Replacing $K$ with the larger set $K^{2N}$, we now have that $K^5$ can be covered by $11^{\rmd(G)}$ translates of $K$. The first step will be to show the following claim, which is an adaption of \cite[Lemma 4.6]{kasprowskiruping17}.

  \begin{claim}
    \label{claim:locally_finite_cover}
    There is a countable collection $\cS$ of $K^5$-slices such that
    \begin{enumerate}
    \item $\{ (KS)^{\circ} \mid S \in \cS \}$ is a locally finite cover of $X$, and
    \item for every pair $S,S' \in \cS$ there exists $h \in G$ such that $\{ g \in K^3 \mid gS \cap S' \neq \emptyset \} \subseteq h \mathring{K}$.
    \end{enumerate}
  \end{claim}

  \begin{proof}[Proof of Claim~\ref{claim:locally_finite_cover}]
    For every $x \in X$ we can by \ref{cor:K-slices-exist-polynomial-growth} find a $K^5$-slice $S_x'$ such that $x \in S_x' \cap (K^5 S_x')^{\circ}$.  Since $K \subseteq K^5$, each $S_x'$ is also a $K$-slice, hence $x \in (K S_x')^{\circ}$ by \cref{lem:box-interior-independence}. Now consider the cover $\{ (K S_x')^{\circ} \mid x \in X \}$ of $X$. Since $G$ is a locally compact, second countable Hausdorff space, it is paracompact and Lindel{\"o}f.  We can therefore find a countable, locally finite, open refinement for the above cover, i.e. an open cover $\cV = \{ V_i \mid i \in \NN \}$ of $X$ such that every $x \in X$ belongs to only finitely many sets from $\cV$ and such that for every $i \in \NN$ there exists $x(i) \in X$ with $V_i \subseteq (K S_{x(i)}')^{\circ}$.  Since $X$ is locally compact and second countable, every compact subset of $X$ intersects only finitely many elements of $\cV$.  We may hence replace each $V_i$ by $\ol{V_i}^{\circ}$ and assume that it is regular open.  By definition of slices, $K S_{x(i)}'$ is homeomorphic to $K \times S_{x(i)}'$, so we can project the compact set $\overline{V_i} \subseteq K S_{x(i)}'$ to the second coordinate. The resulting  compact set $D_i \subseteq S_{x(i)}'$ is then also a $K$-slice satisfying $V_i \subseteq K D_i$.

    We claim that the cover $\{ (K D_i)^{\circ} \mid n \in \NN \}$ of $X$ is also locally finite.  Indeed, let $i \in \NN$ and suppose $x \in (K D_i)^{\circ}$.  Then there exists an open, precompact neighbourhood $U$ of $x$ such that $U \subseteq KD_i$.  Since $x \in KD_i$ we can write $x = ky$ for some $k \in K$ and $y \in D_i$.  By definition of $D_i$ it follows that there exists $x' \in \ol{V_i}$ such that $x' = k'y$ for some $k' \in K$.  Hence $x' = k'k^{-1}x \in K^2U \cap \ol{V_i}$, in particular $K^2U$ intersects $\ol{V_i}$.  Since $K^2U$ is open, it must intersect the regular open set $V_i$.  Since $K^2\ol{U}$ is compact and $\cV$ is a locally finite cover of $X$, this can happen for at only finitely many $i \in \NN$.  Hence there are only finitely many $i \in \NN$ such that $x \in (KD_i)^{\circ}$.

    For every pair $i,j \in \NN$ such that $D_i \cap K^3 D_j \neq \emptyset$, define the function
    \begin{gather*}
      f_{i,j} \colon D_i \cap K^3 D_j \to K^3
    \end{gather*}
    to be the restriction of the projection $K^3 D_j \to K^3$ to the set $D_i \cap K^3 D_j$.  Then $f_{i,j}$ is continuous by definition of slices, hence we can for every $x \in D_i \cap K^3D_j$ pick an open neighbourhood $U_{i,j,x} \subseteq D_i \cap K^3D_j$ of $x$ such that $f_{i,j}(U_{i,j,x}) \subseteq f_{i,j}(x)K^{\circ}$.  Since $X$ and hence $D_i$ is second countable, we can find an open set $U_{i,j,x}'$ in $D_i$ containing $x$ such that $U_{i,j,x'}' \cap K^3D_j = U_{i,j,x}$.  For every $i \in \NN$ the set $J_i = \{ j \in J \mid D_i \cap K^3D_j \neq \emptyset \}$ is finite, so $U_{i,x} = \bigcap_{j \in J_i} U_{i,j,x}'$ is a finite intersection of open sets, hence open.  Since $D_i$ is compact and $D_i = \bigcup_{x \in D_i} U_{i,x}$, we can find $x_1, \ldots, x_{m_i} \in D_i$ such that $D_i = \bigcup_{k=1}^{m_i} U_{i,x_k}$.  Pick open sets $V_{i,k}$ in $D_i$ with $D_i = \bigcup_{k=1}^{m_i} V_{i,k}$ and $\ol{V_{i,k}} \subseteq U_{i,x_k}$.  Let $S_{i,k}$ denote the closure of $V_{i,k}$ and consider the set
    \begin{gather*}
      \cS = \{ S_{i,k} \mid i \in \NN, 1 \leq k \leq m_i \}
      \eqstop
    \end{gather*}
    Then every element of $\cS$ is a closed subset of $D_i$ for some $i \in \NN$, hence a $K$-slice.
    
    We now argue that $\{ (KS)^{\circ} \mid S \in \cS \}$ is a cover of $X$.  To this end it suffices to show that $(KD_i)^{\circ} = \bigcup_{k = 1}^{m_i} (K V_{i,k})^\circ$ for all $i$.  The inclusion from the right to the left is clear.  To see the other inclusion, let $x \in (K D_i)^{\circ}$ and write $x = ky$ for $k \in K$ and $y \in D_i$.  Since $D_i = \bigcup_{k=1}^{m_i} V_{i,k}$ there is $1 \leq k \leq m_i$ such that $y \in V_{i,k}$.  Further, \cref{lem:box_interior} says that $(KD_i)^{\circ} = \mathring{K} \boxint{D_i}$, so that $k \in \mathring{K}$ follows.  So we find that $x \in \mathring{K}V_{i,k}$, which by the definition of a $K$-slice is open in $KD_i$. We conclude by summarising that $x \in \mathring{K}V_{i,k} \cap (KD_i)^{\circ} \subseteq (KV_{i,k})^{\circ}$.

    We also see that $\{ (KS)^{\circ} \mid S \in \cS \}$ is locally finite since each $(K D_i)^{\circ}$ intersects finitely many sets $KS$ for $S \in \cS$ and the cover $\{ (KD_i)^{\circ} \mid i \in \NN \}$ is locally finite.

    For some $i,j \in \NN$ and $1 \leq r \leq m_i$, $1 \leq s \leq m_j$ put $h = f_{i,j}(x_r)$.  We show that 
    \begin{gather*}
      \{ g \in K^3 \mid gS_{i,r} \cap S_{j,s} \neq \emptyset \} \subseteq h \mathring{K}
    \end{gather*}
    If $g$ is a member of the left-hand side, we can find some $x \in S_{i,r} \cap g S_{j,s} \subseteq U_{i,x_k} \cap gU_{j,x_s} \subseteq D_i \cap K^3 D_{j}$, so $j \in J_i$.  Hence $x \in U_{i,j,x_r}'$.  Since $x \in K^3D_j$ as well, we have that $x \in U_{i,j,x_r}$.  This implies that $g = f_{i,j}(x) \in f_{i,j}(x_r)\mathring{K}$.
  \end{proof}
  
  Let now $\cS = (S_i)_{i \in \NN}$ be as in Claim~\ref{claim:locally_finite_cover}.  By the shrinking lemma \cite[Lemma 41.6]{munkres00} we can find a cover $\{ V_i \mid i \in \NN \}$ of $X$ where each $V_i$ is open and $\ol{V_i} \subseteq (KS_i)^{\circ}$.  Then each $\ol{V_i}$ is a closed subset of $KS_i$, so we can project $\ol{V_i}$ onto $S_i$ to get a new $K$-slice $A_i^0 \subseteq S_i$ satisfying $V_i \subseteq K A_i^0$.  So $\{ (K A_i^0)^{\circ} \mid i \in \NN \}$ is a cover of $X$. Also $A_i^0 \subseteq \ol{V_i} \subseteq (KS_i)^{\circ}$, so that $A_i^0 \subseteq \boxint{S_i}$. Consider the following claim, which is an adaption of \cite[Lemma 5.1]{kasprowskiruping17}.

  \begin{claim}
    \label{claim:dimension_reduction}
    Let $k \in \NN$. If $(A_i)_{i \in \NN}$ is a sequence of compact sets where $A_i \subseteq \boxint{S_i}$ and $\dim A_i \leq k$ for all $i \in \NN$, then there exist regular open sets $B_i \subseteq \boxint{S_i}$ in $S_i$ for each $i \in \NN$ such that
    \begin{enumerate}[label=(\alph*)]
    \item \label{it:dim-red:multiplicity}
      the set $\{ \mathring{K}^5 B_i \mid i \in \NN \}$ has multiplicity at most $11^{\rmd(G)}$, and
    \item \label{it:dim-red:dimension}
      for every $i \in \NN$ the compact set
      \begin{gather*}
        A_i \setminus \bigcup_{j \in \NN} \mathring{K}^3B_j
      \end{gather*}
      has dimension at most $k-1$.
    \end{enumerate}
  \end{claim}
  Before proving the claim we show how it can be used to finish the proof of \cref{thm:covering}.  By \cref{prop:dim_properties} we have that $\dim(A_i^0) \leq \dim(X)$ for each $i \in \NN$.  Applying the claim to $(A_i^0)_{i \in \NN}$, we obtain sets $(B_i^0)_{i \in \NN}$ satisfying the conclusion of Claim~\ref{claim:dimension_reduction} with $k = \dim X$. Set
  \begin{gather*}
    A_i^1 = A_i \setminus \bigcup_{j \in \NN} \mathring{K}^3 B_j
    \eqstop
  \end{gather*}
  Then apply Claim~\ref{claim:dimension_reduction} again to the sequence $(A_i^1)_{i \in \NN}$, obtaining a new sequence of sets $(B_i^1)_{i \in \NN}$.  Continuing like this, we obtain sets $(A_i^k)_{i \in \NN}$ and $(B_i^k)_{i \in \NN}$ for every $k \in \NN$. We will now show that
  \begin{gather*}
    \cU = \{ \mathring{K}^5 B_i^k \mid i \in \NN, 0 \leq k \leq \dim X \}
  \end{gather*}
  satisfies the properties of \cref{thm:covering}.  First, note that since $B_i^k$ is a regular open subset of the $K^5$-slice $S_i$ with $B_i^k \subseteq \boxint{S_i}$, it follows from equations \eqref{eq:box_interior} and \eqref{eq:boxint_open} of \cref{lem:box_interior} that $\mathring{K}^5 B_i^k = (K^5\ol{B_i^k})^{\circ}$ is an open tube. Hence the first assertion of \cref{thm:covering} is established.

  We now claim that
  \begin{gather*}
  \label{eq:covering-family}
    \cU' = \{ \mathring{K}^4 B_i^k \mid i \in \NN, 0 \leq k \leq \dim X \}
  \end{gather*}
  is a cover of $X$.  Indeed, let $x \in X$, so that $x \in K A_i^0$ for some $i \in \NN$, say $x = x' y$ with $x' \in K$ and $y \in A_i^0$.  If $y \in \mathring{K}^3 B_j^0$ for some $j \in \NN$ then $x \in K \mathring{K}^3 B_j^0 = \mathring{K}^4 B_j^0$, so $x$ belongs to an element of $\cU'$.  If no such $j$ exists, then by definition we have $y \in A_i^1$.  Continuing like this, if $y \notin \mathring{K}^3 B_j^k$ for any $j \in \NN$ and $0 \leq k \leq \dim X$, we reach the conclusion that $y \in A_i^{\dim X + 1}$.  However $\dim A_i^{\dim X + 1} \leq -1$ so $A_i^{\dim X + 1} = \emptyset$ which is a contradiction. This shows that $\cU'$ is a cover of $X$.

  Now if $x \in X$, say $x \in \mathring{K}^4 B_i^k$ for some $i \in \NN$ and $0 \leq k \leq \dim X$, then $K x \subseteq K (\mathring{K}^4 B_i^k) = \mathring{K}^5 B_i^k$, which shows that $\cU$ satisfies the second assertion of \cref{thm:covering}. Also, by Claim~\ref{claim:dimension_reduction} the multiplicity of $\cU$, being a union of $\dim X + 1$ sets all of multiplicity at most $11^{\rmd(G)}$, cannot exceed $11^{\rmd(G)} \cdot (\dim X + 1)$.  Hence the last assertion of \cref{thm:covering} is also established.
\end{proof}

As we have just shown, it remains to prove Claim~\ref{claim:dimension_reduction}.
\begin{proof}[Proof of Claim~\ref{claim:dimension_reduction}]
  Let $(A_i)_{i \in \NN}$ be as in the statement of the claim.  We will divide the proof into five steps.

  \vspace{0.5em}

  \noindent \textbf{Step 1}.
  In the first step we will construct the sets $(B_i)_{i \in \NN}$.  For this, fix $i \in \NN$.  We apply the \cref{def:small_inductive_dimension} of small inductive dimension to the set $A_i$ viewed as a subset of the ambient space $S_i$ to obtain for every $x \in A_i$ an open neighbourhood $U_x$ of $x$ in $S_i$ which satisfies $\dim(\partial_{S_i}U_x) \leq k-1$.  As explained in \cref{rmk:inductive_dimension_open_regular}, we may assume that the sets $U_x$ are regular open.

  Since $A_i$ is compact we can find a finite subset $F_i\subseteq A_i$ such that $U_i = \bigcup_{x\in F_i}U_x$ contains $A_i$.  Note that $U_i$ is regular open in $S_i$.  Since $X$ is separable and metrizable, every subset is also separable and metrizable, in particular $\partial_{S_i}U_x$ is so for each $x \in F_i$.  Hence we can apply \cref{prop:dim_properties} to obtain
  \begin{equation}
    \label{eq:dimension_red_def}
    \dim(\partial_{S_i}U_i)
    \leq
    \dim \Big( \bigcup_{x \in F_i} \partial_{S_i}U_x \Big)
    \leq
    k-1. 
  \end{equation}
  We define subsets $(I_i)_{i \in \NN}$ of $\NN$ and sets $(B_i)_{i \in \NN}$ in $X$ recursively.  Set $I_1 = \emptyset$ and $B_1 = U_1$.  Further, for $i \geq 2$, set $I_{i} = \{ j \in \NN \mid j < i, K^2 \ol{B_j} \cap U_i \neq \emptyset \}$ and
  \begin{gather*}
    B_i = U_i \setminus \bigcup_{j \in I_i}K^3\ol{B_j}
    \eqstop
  \end{gather*}
  It is then clear from the construction that $B_i$ is an open subset of $S_i$ for each $i \in \NN$.  We will prove that each $B_i$ is regular open by induction.  For $i = 1$ we have that $B_1 = U_1$ which is regular open by construction.  Next, assume that $B_j$ is regular open for all $j < i$. Note first that
  \begin{gather*}
    \ol{B_i}
    \subseteq
    \ol{U_i} \setminus \Big( \bigcup_{j\in I_i} K^3\ol{B_j} \Big)^{\circ}
    \subseteq
    \ol{U_i} \setminus \bigcup_{j\in I_i} (K^3\ol{B_j})^{\circ}
    \eqstop
  \end{gather*}
  Now by the induction assumption combined with equations \eqref{eq:box_interior} and \eqref{eq:boxint_open} of \cref{lem:box_interior} we have that $(K^3\ol{B_j})^{\circ} = \mathring{K}^3\boxint{\ol{B_j}} = \mathring{K}^3 B_j$ for all $j < i$.  Since $S_i$ is a $K^3$-slice, we have $\ol{WB} = \ol{W}\, \ol{B}$ for all subsets $W \subseteq K^3$ and $B \subseteq S_i$.  So we get that $\ol{(K^3\ol{B_j})^{\circ}} = K^3\ol{B_j}$.  Using this and the fact that $U_i$ is regular open, we find
  \begin{gather*}
    \mathring{\ol{B_i}}
    =
    \Big( \ol{U_i}\setminus\bigcup_{j\in I_i} (K^3\ol{B_j} )^{\circ} \Big)^{\circ}
    \subseteq
    \mathring{\ol{U_i}} \setminus \ol{\bigcup_{j\in I_i}  (K^3\ol{B_j})^{\circ}}
    =
    U_i \setminus \bigcup_{j\in I_i} \ol{ (K^3\ol{B_j})^{\circ} }
    =
    U_i \setminus \bigcup_{j\in I_i} \mathring{K}^3B_j
    =
    B_i
    \eqstop
  \end{gather*}
  Since $B_i \subseteq \mathring{\ol{B_i}}$ is obvious, this proves that $B_i$ is regular open, finishing the induction argument.

  \vspace{0.5em}

  \noindent \textbf{Step 2}.
  In this step we show assertion \ref{it:dim-red:multiplicity} of Claim~\ref{claim:dimension_reduction}.  Note that by construction of $B_i$, the elements of the set $\{KB_i\}_{i\in\NN}$ are pairwise disjoint.  Indeed, suppose for a contradiction that $x \in KB_i \cap KB_j$ with $j < i$, say $x = g_iy_i = g_jy_j$ with $g_i,g_j \in K$, $y_i \in B_i$ and $y_j \in B_j$.  Then $y_i = g_i^{-1}x \in U_i \cap K(KB_j)$, hence $j \in I_i$.  It then follows from the definition of $B_i$ that $y_i \notin K^3 \ol{B_j}$ which contradicts $y_j = g_j^{-1}g_iy_i \in K^2B_j$.

  Consider now the set $\{K^5 B_j\}_{j\in\NN}$ from assertion \ref{it:dim-red:multiplicity} of the claim.  By choice of $K$, its power $K^5$ can be covered by $\ell = 11^{\rmd(G)}$ translates of $K$, say $K^5 \subseteq \bigcup_{n=1}^\ell g_n K$ for some $g_1, \dotsc, g_\ell \in G$.  Suppose for a contradiction that there exist indices $i_1 < \cdots < i_{\ell+1}$ such that there is $x \in K^5B_{i_1} \cap \cdots \cap K^5B_{i_{\ell+1}}$.  Then for each $1 \leq r \leq \ell + 1$ there exists $1 \leq n_r \leq \ell$ such that $g_{n_r}^{-1}x \in KB_{i_r}$.  Since $\{ n_1, \dotsc, n_{\ell+1} \} \subseteq \{ 1 , \dotsc, \ell \}$, there exist $1 \leq r,r' \leq \ell + 1$ such that $n_r = n_{r'}$.  This gives $g_{n_r}^{-1}x = g_{n_{r'}}^{-1}x \in KB_{i_r} \cap KB_{i_{r'}}$, which is a contradiction.  Hence the set $\{ K^5 B_j \}_{j \in \NN}$ has multiplicity at most $\ell$.
  
  \vspace{0.5em}

  \noindent \textbf{Step 3}.
  In this step we will show that the sets $S_j\cap (\partial K^3) \ol{B}_k$ are empty when $j \in \NN$ and $k \in I_j$.

  Suppose for a contradiction that there is some $x \in S_j \cap (\partial K^3) \ol{B}_k$, say $x = gy$ for $g \in \partial K^3$ and $y \in \ol{B_k}$.  Since $k \in I_j$, there exists some $x' \in K^2\ol{B_k} \cap U_j \subseteq K^2\ol{B_k}\cap S_j$, say $x' = g'y'$ with $g' \in K^2$ and $y' \in \ol{B_k}$.  We now have that $g,g' \in K^3$, $S_j \cap g S_k \neq \emptyset$ and $S_j \cap g' S_k \neq \emptyset$, so by Claim~\ref{claim:locally_finite_cover} we obtain that $g'^{-1}g \in \mathring{K}$.  Hence
  \begin{gather*}
    g= g'(g'^{-1}g) \in K^2\mathring{K} = \mathring{K}^3
    \eqcomma
  \end{gather*}
  which contradicts $g \in \partial K^3$.

  \vspace{0.5em}

  \noindent \textbf{Step 4}.
  In this step we prove that showing \ref{it:dim-red:dimension} of the present claim can be reduced to showing that $\partial_{S_j} B_j$ has dimension at most $k-1$ for all $j \in \NN$.  Fix $i \in \NN$ and note that
  \begin{gather*}
    \bigcup_{j \in \NN} \mathring{K}^3B_j
    \supseteq
    \bigcup_{j \leq i} \mathring{K}^3B_j
    =
    \mathring{K}^3\Big( U_i \setminus \bigcup_{j \in I_i} K^3 \ol{B_j} \Big) \cup \bigcup_{j < i} \mathring{K}^3B_j
    \supseteq
    U_i \setminus \bigcup_{j \in I_i} K^3 \ol{B_j} \cup \bigcup_{j < i} \mathring{K}^3B_j
    \eqstop
  \end{gather*}
  Hence, taking complements in $A_i$ and using the fact that $A_i \subseteq U_i$, we obtain
  \begin{align*}
    A_i \setminus \bigcup_{j\in\NN}\mathring{K}^3 B_j
    & \subseteq
      \Big( A_i \setminus  \Big( U_i \setminus \bigcup_{j \in I_i} K^3 \ol{B_j} \Big) \Big) \setminus \bigcup_{j \in I_i} \mathring{K}^3B_j \\
    & =
      \Big( A_i \setminus U_i \cup A_i \cap \bigcup_{j<i} K^3 \ol{B_j} \Big) \setminus \bigcup_{j \in I_i} \mathring{K}^3B_j \\
    & =
      A_i \cap \Big( \bigcup_{j \in I_i} K^3 \ol{B_j} \setminus \bigcup_{j \in I_i} \mathring{K}^3 B_j \Big) \\
    & \subseteq
      \bigcup_{j \in I_i} A_i \cap (K^3\ol{B_j} \setminus \mathring{K}^3 B_j)
  \end{align*}
  Thus, in order to show that $A_i \setminus \bigcup_{j \in \NN} \mathring{K}^3 B_j$ has dimension at most $k-1$, it suffices by \cref{prop:dim_properties} to show that the sets
  \begin{gather*}
    A_i \cap (K^3\ol{B_j} \setminus \mathring{K}^3B_j)\eqcomma \qquad j \in I_i\eqcomma
  \end{gather*}
  have dimension at most $k-1$.  By Step 3, the set $A_i \cap (\partial K^3)\ol{B_j}$ is empty.  Therefore, using \eqref{eq:several-boundary-expressions} of \cref{lem:box_interior}, we have that
  \begin{gather*}
    A_i \cap (K^3\ol{B_j} \setminus \mathring{K}^3B_j)
    =
    (A_i \cap (\partial K^3)\ol{B_j} ) \cup (A_i \cap K^3(\partial_{S_j}B_j) )
    =
    A_i \cap K^3(\partial_{S_j}B_j)
    \eqstop
  \end{gather*}
  For each $i,j \in \NN$, we consider the projection $K^3S_j \to S_j$, which maps $K^3(\partial_{S_j}B_j)$ to $\partial_{S_j}B_j$. The restriction of this projection to $A_i \cap K^3(\partial_{S_j}B_j)$ is a homeomorphism onto its image by \cref{lem:slice_restriction_injective}, so it suffices to show that the dimension of $\partial_{S_j}B_j$ is at most $k-1$.

  \vspace{0.5em}

  \noindent \textbf{Step 5}.
  In this step we finish the proof of Claim~\ref{claim:dimension_reduction} by showing that $\partial_{S_i}B_i$ has dimension at most $k-1$ for all $i \in \NN$. We establish this by induction. Consider first the base case $i=1$. Then $\partial_{S_1}B_1 = \partial_{S_1}U_1$ which has dimension at most $k-1$ by inequality \eqref{eq:dimension_red_def}.

  For the induction step, let $i \in \NN$ and assume that $\dim(\partial_{S_j}B_j) \leq k-1$ for all $j < i$.  First we estimate $\partial_{S_i}B_i$ using \Cref{lem:box_interior} to obtain
  \begin{align*}
    \partial_{S_i}B_i
    & \subseteq
      \partial_{S_i}U_i\cup\bigcup_{j \in I_i}\partial_{S_i}(S_i\cap\mathring{K}^3B_i) \\
    & =
      \partial_{S_i}U_i\cup\bigcup_{j \in I_i} S_i \cap\partial(\mathring{K}^3B_j)\\
    & =
      \partial_{S_i}U_i\cup\bigcup_{j \in I_i} (S_i \cap (\partial K^3)\ol{B_j} ) \cup (S_i \cap K^3(\partial_{S_j}B_j))
      \eqstop
  \end{align*}
  From \eqref{eq:dimension_red_def} we know that $\partial_{S_i}U_i$ has dimension at most $k-1$.  Further, by Step 3, the sets $S_i \cap (\partial K^3)\ol{B_j}$ for $j \in I_i$ are empty.  Applying \cref{prop:dim_properties}, we need only show that $S_i \cap K^3(\partial_{S_j}B_j)$.  But for this we can again consider the projection of the tube $K^3(\partial_{S_j}B_j) \to \partial_{S_j}B_j$ which, once restricted to $S_i \cap K^3(\partial_{S_j}B_j)$, becomes a homeomorphism onto its image by \cref{lem:slice_restriction_injective}.  Since the image is a subset of $\partial_{S_j}B_j$ which has dimension at most $k-1$ by the induction hypothesis, we can appeal to \cref{prop:dim_properties} to finish the proof.
\end{proof}


\section{Partitions of unity}
\label{sec:partitions}

Our next aim is to provide a suitable adaption and generalisation of \cite[Proposition 8.23]{hirshbergszabowinterwu2017}.  The notion of Lipschitz functions, which was used in the context of $\mathbb{R}$-actions, is not suitable for general amenable locally compact groups and we resort to the following notion of F{\o}lner functions instead.  As it turns out, this concept is still suitable to characterise finite tube dimension and can be used for nuclear dimension estimates.
\begin{definition}
  \label{def:folner-function}
  Let $G \curvearrowright X$ be an action and $(Y, \mathrm{d})$ a metric space. Let $K \subseteq G$ be a compact identity neighborhood and let $\epsilon > 0$. Then a function $\Phi: X \to Y$ is called $(K, \varepsilon)$-F{\o}lner if for every $g \in K$ and every $x \in X$ we have
  \begin{gather*}
    \mathrm{d}(\Phi(gx), \Phi(x)) < \varepsilon
    \eqstop
  \end{gather*}
  A function $\varphi: X \to \mathbb{C}$ is called $(K, \varepsilon)$-F{\o}lner if it is $(K, \varepsilon)$-F{\o}lner for the Euclidean metric on $\bC$.
\end{definition}

\begin{remark}
  This notion is not to be confused with the F{\o}lner function of a group.
\end{remark}

\begin{lemma}
  \label{lem:creating-folner-functions}
  Let $G$ be an amenable locally compact group and let $G \grpaction{} X$ be a continuous action by homeomorphisms.  Let $\varphi\colon X \to \CC$ be a continuous bounded function, let $K \subseteq G$ be compact and $\veps > 0$.  If $B \subseteq G$ is a $(K, \varepsilon/\|\phi\|_\infty)$-F{\o}lner set, then
  \begin{gather*}
    \psi(x) = \frac{1}{\haar(B)}\int_B \vphi(g^{-1}x) \rmd \haar(g)
  \end{gather*}
  is $(K, \veps)$-F{\o}lner and continuous.
\end{lemma}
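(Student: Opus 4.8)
The plan is to prove the two assertions—that $\psi$ is $(K,\veps)$-F{\o}lner and that $\psi$ is continuous—separately, the first by a change of variables exploiting left-invariance of the Haar measure $\haar$, the second by a standard regularity-plus-uniform-continuity argument. Throughout one may assume $\|\vphi\|_\infty > 0$, since otherwise $\vphi \equiv 0$ and the statement is trivial, and one uses that a F{\o}lner set $B$ has $0 < \haar(B) < \infty$.

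For the F{\o}lner estimate, I would fix $k \in K$ and $x \in X$ and compare $\psi(kx)$ with $\psi(x)$. Substituting $g = kh$ in the defining integral and using left-invariance of $\haar$ together with $(kh)^{-1}kx = h^{-1}x$, this rewrites as
\[
  \psi(kx) = \frac{1}{\haar(B)} \int_{k^{-1}B} \vphi(h^{-1}x) \, \rmd\haar(h) .
\]
Subtracting the defining integral for $\psi(x)$, the contributions over $k^{-1}B \cap B$ cancel and only the symmetric difference survives, giving
\[
  |\psi(kx) - \psi(x)| \leq \frac{\|\vphi\|_\infty}{\haar(B)} \, \haar\!\left( k^{-1}B \,\triangle\, B \right) .
\]
A short invariance computation—$\haar(k^{-1}B \setminus B) = \haar(B \setminus kB)$ and $\haar(B \setminus k^{-1}B) = \haar(kB \setminus B)$—shows $\haar(k^{-1}B \triangle B) = \haar(kB \triangle B)$, so the right-hand side equals $\|\vphi\|_\infty \, \haar(kB \triangle B)/\haar(B)$. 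Invoking the hypothesis that $B$ is $(K, \veps/\|\vphi\|_\infty)$-F{\o}lner, i.e.\ $\haar(kB \triangle B) < (\veps/\|\vphi\|_\infty)\,\haar(B)$ for every $k \in K$, yields $|\psi(kx) - \psi(x)| < \veps$, as required.

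For continuity, I would fix $x_0 \in X$ and $\eta > 0$ and bound $|\psi(y) - \psi(x_0)|$ by $\haar(B)^{-1} \int_B |\vphi(g^{-1}y) - \vphi(g^{-1}x_0)| \, \rmd\haar(g)$. Since $\haar(B) < \infty$, inner regularity of the Haar measure provides a compact $L \subseteq B$ with $\haar(B \setminus L)$ so small that the integral over $B \setminus L$ is at most $\eta/2$, using only the crude bound $2\|\vphi\|_\infty$ on the integrand. On the compact piece $L$ the map $(g,x) \mapsto \vphi(g^{-1}x)$ is jointly continuous, so a finite subcover of $L$ obtained from continuity at the points $(g, x_0)$ produces a single neighbourhood $W$ of $x_0$ on which $|\vphi(g^{-1}y) - \vphi(g^{-1}x_0)| < \eta/2$ for all $g \in L$ simultaneously; this makes the integral over $L$ at most $\eta/2$. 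Summing the two estimates gives $|\psi(y) - \psi(x_0)| < \eta$ for $y \in W$, proving continuity at $x_0$.

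The main obstacle I anticipate is bookkeeping rather than conceptual: one must match the symmetric-difference bound to the precise form of the F{\o}lner condition imposed on $B$ and keep track of the translation direction ($k^{-1}B$ versus $kB$), which is why the elementary invariance identity $\haar(k^{-1}B \triangle B) = \haar(kB \triangle B)$ is worth isolating. The only other point needing care is that $B$ is merely of finite measure and need not be relatively compact, so the continuity argument genuinely relies on the inner-regularity approximation by $L$ before joint continuity of the action can be exploited on a compact set.
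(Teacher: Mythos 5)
Your proof is correct and takes essentially the same route as the paper: the paper writes $\psi = \frac{1}{\haar(B)}\mathbb{1}_B * \varphi$, commutes the translation past the convolution, and applies the $\mathrm{L}^1$--$\mathrm{L}^\infty$ bound $\|\mathbb{1}_{kB}-\mathbb{1}_B\|_1 = \haar(kB \bigtriangleup B) \leq \frac{\veps}{\|\varphi\|_\infty}\haar(B)$, which is exactly your change-of-variables and symmetric-difference computation in packaged form. The only divergence is that the paper dismisses continuity as immediate, whereas you supply a careful (and correct) inner-regularity-plus-compactness argument, which is indeed the right thing to do when $B$ is merely of finite measure rather than relatively compact.
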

\begin{proof}
  It is clear that $\psi$ is continuous, since $\varphi$ is continuous.  We have to show that it is $(K, \varepsilon)$-F{\o}lner.  For $g \in K$ we find that
  \begin{align*}
    \|g\psi - \psi\|_\infty
    & =
      \|g (\frac{1}{\mathrm{m}(B)} \mathbb{1}_B * \varphi) -  \frac{1}{\mathrm{m}(B)} \mathbb{1}_B * \varphi\|_\infty \\
    & =
      \|(g \frac{1}{\mathrm{m}(B)} \mathbb{1}_B) * \varphi -  \frac{1}{\mathrm{m}(B)} \mathbb{1}_B * \varphi\|_\infty \\
    & \leq 
      \|(g \frac{1}{\mathrm{m}(B)} \mathbb{1}_B) -  \frac{1}{\mathrm{m}(B)} \mathbb{1}_B\|_1 * \|\varphi\|_\infty    \\
    & \leq
      \frac{\varepsilon}{\|\varphi\|_\infty} \| \varphi\|_\infty\\
    & =
      \varepsilon
      \eqstop
  \end{align*}
\end{proof}

The next proposition is an adaption of \cite{hirshbergszabowinterwu2017}, which was formulated for $\mathbb{R}$-actions.  In the general setup of amenable groups, it is not possible any longer to show the existence of Lipschitz partitions of unity, however a slight adaption of the proof makes it possible to replace these by suitable F{\o}lner partitions of unity, which suffice for the purpose of proving nuclear dimension estimates.  There is only a conceptual innovation, but no substantial technical change over the proof of \cite[Proposition 8.23]{hirshbergszabowinterwu2017}.

In order to formulate our proposition, we briefly recall combinatorial simplicial complexes and their $\ell^1$-metric realisation.
\begin{itemize}
\item A (combinatorial) simplicial complex is a set $Z$ of finite sets closed under taking subsets.  An $l$-simplex in $Z$ is a element $\sigma \in Z$ with cardinality $|\sigma| = l + 1$ and we denote by $Z^{(l)}$ the set of all $l$-simplices of $Z$. The elements of $Z^{(0)}$ are called the \emph{vertices} of $Z$. The \emph{dimension} of $Z$ is the highest number $d$ such that $Z^{(d-1)} \neq \emptyset$ (if no such number exists, the dimension of $Z$ is infinite).
\item Given a simplicial complex $Z$, we denote by $\mathrm{C} Z = \{\sigma \in Z \sqcup \{\infty\} \mid \sigma \cap Z^{(0)} \in Z\}$ the cone over $Z$.
\item The $\ell^1$-metric realisation of a simplicial complex $Z$ is
  \begin{gather*}
    |Z| = \bigcup_{\sigma \in Z} \{(z_v)_v \in [0,1]^{Z^{(0)}} \mid \sum_{v \in \sigma} z_v = 1 \text{ and } z_v = 0 \text{ for any } v \in Z_0 \setminus \sigma \}
  \end{gather*}
  endowed with the restriction of the $\ell^1$-metric on finitely supported functions on $Z^{(0)}$.
\item For any vertex $v_0\in Z^{(0)}$, the \emph{simplicial star} around $v$ is the set of simplices that contain $v_0$, and the \emph{open star} around $v_0$ is the subset $\{ (z_v)_v \in |Z| : z_{v_0} > 0 \}$ of the geometric realization of $Z$.
\item If $\mathcal{U}$ is a finite open cover of $X$, its \emph{nerve complex} is the simplicial complex
\[ \mathcal{N}(\mathcal{U}) = \{ \mathcal{U}' \subseteq \mathcal{U} : \bigcap_{U \in \mathcal{U}'} U \neq \emptyset \} . \]
The dimension of $\mathcal{N}(\mathcal{U})$ is equal to $\mathrm{mult}(\mathcal{U}) - 1$.
\item If $\mathcal{U}$ is a finite open cover of a closed subset $C$ of $X$, we denote by $\mathcal{U}^+$ the open cover of $X$ given by $\mathcal{U}^+ = \mathcal{U} \cup \{ X \setminus C \}$. One can then identify $\mathcal{N}(\mathcal{U}^+)$ as a subset of $\mathrm{C}\mathcal{N}(\mathcal{U})$ via sending $X \setminus C$ to $\infty$.
\end{itemize}

We are now ready to formulate our adaption and generalisation of \cite[Proposition 8.23]{hirshbergszabowinterwu2017}.
\begin{proposition}
  \label{prop:characterisation-tube-dimension}
  Let $G$ be an amenable locally compact second countable group and let $G \curvearrowright X$ be a continuous action by homeomorphisms.  Then the following statements are equivalent.
  \begin{enumerate}
  \item \label{it:tube-dim}
    The tube dimension of $G \curvearrowright X$ is at most $d$.

  \item \label{it:map-into-cone}
    For every compact subset $K \subseteq G$, every compact subset $A \subseteq X$ and every $\varepsilon > 0$ there is a finite simplicial complex $Z$ of dimension at most $d$ and a continous map $\Phi: X \to |\mathrm{C} Z|$ such that
    \begin{enumerate}
    \item $\Phi$ is $(K, \varepsilon)$-F{\o}lner,
    \item for every vertex $v \in Z_0$ the preimage of the open star around $v$ under $\Phi$ is contained in a tube, and
    \item $\Phi(A) \subseteq |Z|$.
    \end{enumerate}
    
  \item \label{it:partition-plain}
    For every compact subset $K \subseteq G$, every $\varepsilon > 0$ and every compact subset $A \subseteq X$ there is a finite partition of unity $(\varphi_i)_{i \in I}$ for $A \subseteq X$ such that
    \begin{enumerate}
    \item $\varphi_i$ is $(K,\varepsilon)$-F{\o}lner for all $i$,
    \item $\varphi_i$ is supported in the interior of a tube, and
    \item there is a partition $I = I^{(0)} \sqcup \hdots \sqcup I^{(d)}$ such that for all $l \in \{0, \dotsc, d\}$ and all $i,j \in I^{(l)}$ we have
      \begin{gather*}
        \supp \varphi_i \cap \supp \varphi_j = \emptyset
        \eqstop
      \end{gather*}
    \end{enumerate}

  \item \label{it:partition-fattened}
    For every pair of compact subsets $K, L \subseteq G$, every $\varepsilon > 0$ and every compact subset $A \subseteq X$ there is a finite partition of unity $(\varphi_i)_{i \in I}$ for $A \subseteq X$ such that
    \begin{enumerate}
    \item $\varphi_i$ is $(K,\varepsilon)$-F{\o}lner for all $i$,
    \item $L\supp \varphi_i$ is contained in the interior of a tube, and
    \item there is a partition $I = I^{(0)} \sqcup \hdots \sqcup I^{(d)}$ such that for all $l \in \{0, \dotsc, d\}$ and all $i,j \in I^{(l)}$ we have
      \begin{gather*}
        L\supp \varphi_i \cap L \supp \varphi_j = \emptyset
        \eqstop
      \end{gather*}
    \end{enumerate}

  \item \label{it:cover-fattened}
    For every compact subset $L \subseteq G$ and every compact subset $A \subseteq X$ there is a finite collection $\mathcal{U}$ of open subsets of $X$ that cover $A$ such that
    \begin{enumerate}
    \item each $U \in \mathcal{U}$ is contained in a tube of shape larger than $L$, and
    \item there is a partition $\mathcal{U} = \mathcal{U}^{(0)} \sqcup \dotsm \sqcup \mathcal{U}^{(d)}$ such that for any $l \in \{0, \dotsc, d\}$ and any distinct elements $U_1, U_2 \in \mathcal{U}^{(l)}$ we have
      \begin{gather*}
        L\overline{U_1} \cap L\overline{U_2} = \emptyset
        \eqstop
      \end{gather*}
    \end{enumerate}
  \end{enumerate}
\end{proposition}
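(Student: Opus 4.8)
The plan is to establish the equivalence by proving the cyclic chain of implications (i)$\Rightarrow$(ii)$\Rightarrow$(iii)$\Rightarrow$(iv)$\Rightarrow$(v)$\Rightarrow$(i). The conceptual backbone is the same as in \cite[Proposition 8.23]{hirshbergszabowinterwu2017}: a long, low-multiplicity cover by tubes is repackaged first as a F{\o}lner map into the geometric realisation of a low-dimensional nerve, then as a coloured F{\o}lner partition of unity, and finally back into a cover. The two features that require new input are that amenability must be used to manufacture the F{\o}lner property -- this enters exclusively through \Cref{lem:creating-folner-functions} -- and that the \emph{fattened} variants (iv) and (v), which are tailored to the later nuclear dimension estimate, have to be extracted from the tube geometry rather than from a metric.

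For (i)$\Rightarrow$(ii) I would fix $K$, $A$ and $\veps$, choose a symmetric $(K,\veps/2)$-F{\o}lner set $B\subseteq G$ by amenability, and apply \Cref{def:tube-dim} with the enlarged compact set $\overline B\,\overline B K$ and a compact neighbourhood of $A$ as $Y$. Since the resulting family covers $Y\supseteq A$ and $A$ is compact, I extract a finite subfamily $\mathcal U$ that still satisfies the long property over $A$, of multiplicity at most $d+1$ and with each member inside a tube whose shape already absorbs $B$. Choosing a partition of unity subordinate to $\mathcal U^{+}=\mathcal U\cup\{X\setminus A\}$ and averaging each member over $B$ via \Cref{lem:creating-folner-functions} produces $(K,\veps)$-F{\o}lner functions; here longness with respect to $\overline B\,\overline B K$ is exactly what keeps every averaged support inside a tube. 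Assembling these functions into coordinates gives a continuous $\Phi\colon X\to|\mathrm C\mathcal N(\mathcal U)|$ that is $(K,\veps)$-F{\o}lner, sends $A$ into $|\mathcal N(\mathcal U)|$ because the $X\setminus A$ coordinate vanishes there, and lands in a complex of dimension at most $d$ since $\mathcal U$ has multiplicity at most $d+1$.

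The passage (ii)$\Rightarrow$(iii) is where the colouring is produced. I would pass to the barycentric subdivision $Z'$ of $Z$ and pull back its barycentric coordinate functions along $\Phi$, setting $\varphi_{\hat\sigma}=z_{\hat\sigma}\circ\Phi$. Grouping the barycenters by the dimension of the simplex they resolve yields the partition $I^{(0)}\sqcup\dots\sqcup I^{(d)}$; two barycenters of equal dimension never lie in a common simplex of $Z'$, so same-colour supports are disjoint, as required. Each open star in $Z'$ refines an open star of $Z$, so the supports still sit in tubes, and since the barycentric coordinates are $\ell^1$-Lipschitz in the original coordinates the F{\o}lner constant degrades only by a fixed factor, which I absorb by shrinking $\veps$ at the outset. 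The step (iv)$\Rightarrow$(v) is then immediate by taking $\mathcal U=\{\{\varphi_i>0\}\}$, and (v)$\Rightarrow$(i) follows by applying (v) with $L=K\cup\{e\}$ and $A=Y$: the family $\{LU\mid U\in\mathcal U\}$ consists of open sets, each contained in a tube, covers $Y$ with the long property (if $x\in U$ then $Kx=Lx\subseteq LU$), and has multiplicity at most $d+1$ because within each colour class the fattenings $L\overline{U}$ are pairwise disjoint.

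I expect the main obstacle to be (iii)$\Rightarrow$(iv), the fattening. The tube-containment of $L\,\supp\varphi_i$ cannot be read off the abstract statement (iii), so I would instead re-run (i)$\Rightarrow$(iii) with the ambient tubes enlarged: feeding a compact set that absorbs $L$ into \Cref{thm:covering} (equivalently, using \Cref{lem:extend_slice} to promote each slice to an $LM$-slice) makes the tube shapes large enough that $L\,\supp\varphi_i$ stays inside a tube by \Cref{lem:box_interior}. The delicate point is the within-colour disjointness $L\overline{U_i}\cap L\overline{U_j}=\emptyset$. Inside a single tube $MS$ this collapses to disjointness of the slice-parts, because if $S$ is an $LM$-slice then $LM\overline{B_i}\cap LM\overline{B_j}=\emptyset$ is equivalent to $\overline{B_i}\cap\overline{B_j}=\emptyset$ by injectivity of $K\times S\to X$; the cross-tube interactions are controlled by the separation property built into the slice construction, namely that $\{g\in K^{3}\mid gS\cap S'\neq\emptyset\}$ lies in a single left translate of $\mathring K$. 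Making these two mechanisms cover every pair of same-colour supports simultaneously, while preserving the F{\o}lner property established earlier, is the technical heart of the argument; everything else is bookkeeping that parallels \cite[Proposition 8.23]{hirshbergszabowinterwu2017}.
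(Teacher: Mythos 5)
Your steps (i)$\Rightarrow$(ii), (ii)$\Rightarrow$(iii), (iv)$\Rightarrow$(v) and (v)$\Rightarrow$(i) match the paper's proof in all essentials; in particular the paper realises your barycentric-subdivision idea through the simplex-indexed functions $\nu_\sigma$ of \cite[Lemma 8.18]{hirshbergszabowinterwu2017}, which is the same colouring by simplex dimension. The genuine gap is exactly the step you flag as the ``technical heart'', namely (iii)$\Rightarrow$(iv), and there your assessment is wrong in a way that matters: the fattened conditions \emph{can} be read off the abstract statement (iii), and the paper's proof consists precisely in doing so by a truncation trick. After enlarging $K$ so that $L \subseteq K$, apply (iii) with a F{\o}lner constant $\delta_2$ strictly smaller than a truncation level $\delta_1$, and set $\psi_i' = (\psi_i - \delta_1)_+$. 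The $(K,\delta_2)$-F{\o}lner property of $\psi_i$ then gives
\begin{gather*}
  L \supp \psi_i'
  \subseteq
  \{x \in X \mid \max_{g\in L}\psi_i(g^{-1}x) \geq \delta_1\}
  \subseteq
  \{x \in X \mid \psi_i(x) \geq \delta_1 - \delta_2\}
  \subseteq
  \supp \psi_i
  \eqcomma
\end{gather*}
so both fattened conditions --- $L\supp\psi_i'$ contained in the interior of a tube, and $L\supp\psi_i' \cap L\supp\psi_j' = \emptyset$ for same-colour $i,j$ --- are \emph{inherited} from the unfattened ones in (iii). In particular the cross-tube same-colour disjointness, which you explicitly leave unresolved, costs nothing. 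One then renormalises $\varphi_i = \psi_i'/\psi'$ with $\psi' = \sum_i \psi_i' + (\mathbb{1}_X - \sum_i\psi_i) \geq 1 - (d+1)\delta_1$, and a choice of $\delta_1,\delta_2$ as in the paper keeps each $\varphi_i$ $(K,\veps)$-F{\o}lner.

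The alternative you propose instead --- re-running the covering construction with enlarged slices via \Cref{thm:covering} and \Cref{lem:extend_slice} --- fails on two counts. First, it is not available under the hypotheses of the proposition: the statement concerns an arbitrary amenable locally compact second countable group acting by homeomorphisms, and hypothesis (i) only provides covers by open sets contained in tubes; it gives neither the existence of slices in the sense of \Cref{def:admits_slices} nor polynomial growth, both of which \Cref{thm:covering} requires. Second, even granting that machinery, what you would prove is (i)$\Rightarrow$(iv) rather than (iii)$\Rightarrow$(iv), which breaks your cyclic chain: (ii) and (iii) become dead ends, and the five-fold equivalence remains unproved unless you separately close the loop, e.g.\ by showing (iii)$\Rightarrow$(i). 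That implication is in fact not hard --- taking $\veps < 1/(d+1)$, at each point of $A$ some $\varphi_i$ is at least $1/(d+1)$, and the F{\o}lner property then forces $Kx \subseteq \{\varphi_i > 0\}$, yielding a long cover of multiplicity at most $d+1$ --- but it appears nowhere in your write-up, so as it stands the argument is incomplete even after granting the unavailable machinery.
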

\begin{proof}
  We start by proving \ref{it:tube-dim} $\Rightarrow$ \ref{it:map-into-cone}.  Fix the notation of \ref{it:map-into-cone}, let $0 < C$ be such that $3(d+1)C < \varepsilon$ and let $L \subseteq G$ be an $(K,C)$-F{\o}lner set.  Consider $B = LA$.  Then by the definition of tube dimension there is a family of open subsets $\mathcal{U}$ of $X$, each contained in a tube, having multiplicity at most $d+1$ such that for every $x \in LB$ there is $U \in \mathcal{U}$ such that $Lx \subseteq U$. Consider the sets
  \begin{gather*}
    V_U = \{x \in X \mid Lx \subseteq U\} \quad U \in \mathcal{U}
    \text{,}
  \end{gather*}
  and the collection $\mathcal{V} = \{V_U \mid U \in \mathcal{U}\}$. Then $\mathcal{V}$ covers $B$ and we can find a finite subset $\mathcal{U}_0 \subseteq \mathcal{U}$ such that $\mathcal{V}_0 = \{V_U \mid U \in \mathcal{U}_0\}$ remains a cover of $B$.  Let $(\varphi_U)_{U \in \, \mathcal{U}_0}$ be a partition of unity for $B \subseteq X$ subordinate to $\mathcal{V}_0$.  By \cref{lem:creating-folner-functions}, the functions $\psi_U = \frac{1}{\mathrm{m}(L)} \mathbb{1}_L * \varphi_U$ for $U \in \mathcal{U}_0$ are $(K, C)$-F{\o}lner. Note also that $\supp(\psi_U) \subseteq \supp(\mathbb{1}_L)\supp(\phi_U) \subseteq LV_U \subseteq U$ for each $U \in \mathcal{U}_0$. Since $\mathcal{U}_0$ has multiplicity at most $(d + 1)$, at most $2(d+1)$ functions from the set $\{g\psi_U \mid U \in \mathcal{U}_0\} \cup \{\psi_U \mid U \in \mathcal{U}_0\}$ can be non-zero at a time.  So for $g \in K$, we find that
  \begin{gather*}
    \|g(\mathbb{1}_X - \sum_{U \in \mathcal{U}_0} \psi_U) - (\mathbb{1}_X - \sum_{U \in \mathcal{U}_0} \psi_U)\|_\infty
    =
    \|\sum_{U \in \mathcal{U}_0} g\psi_U - \sum_{U \in \mathcal{U}_0} \psi_U \|_\infty
    \leq
    2(d + 1)C
    \text{,}
  \end{gather*} 
which shows that $\mathbb{1}_X - \sum_{U \in \mathcal{U}_0} \psi_U$ is $(K, 2(d+1)C)$-F{\o}lner.

Let $Z = \mathcal{N}(\mathcal{U}_0)$ be the nerve of $\mathcal{U}_0$, which is a simplicial complex of dimension at most $\mathrm{mult}(\mathcal{U}_0) = (d+1)-1 = d$.  Consider the continuous map
\begin{gather*}
  \Phi\colon X \to |\cN(\cU_0^+)| \subseteq |\rC Z|
\end{gather*}
given by $\Phi = (\mathbb{1}_X - \sum_{U \in \, \cU_0} \psi_U) \oplus \bigoplus_{U \in \, \cU_0} \psi_U$ and observe that $\Phi$ is $(K, 3(d+1)C)$-F{\o}lner.  By the choice of $C$, this shows that $\Phi$ is $(K, \varepsilon)$-F{\o}lner.  It maps $A$ into $|\mathcal{N}(\mathcal{U}_0)|$, since $\mathbb{1}_X - \sum_{U \in \mathcal{U}_0} \psi_U$ vanishes on $A$.  Also the preimage of every open star is contained in some $U \in \mathcal{U}_0$, which in turn is contained in a tube.

Next we prove \ref{it:map-into-cone} $\Rightarrow$ \ref{it:partition-plain}. Let $K \subseteq G$ be a compact subset, let $A \subseteq X$ be compact and let $\varepsilon > 0$.  Let $\Phi: X \to |CZ|$ be chosen as \ref{it:map-into-cone} for $K \subseteq G$ and the constant $\frac{\varepsilon}{2 (d+1)(d+2)(2d+3)}$.  Let $I^{(l)}$ be the collection of $l$-simplices of $Z$ for $l \in \{0,\dotsc, d\}$, and let $I = I^{(0)} \cup \cdots \cup I^{(d)}$, which is a disjoint union. Consider the functions $\nu_\sigma: |\mathrm{C} Z| \to [0,1]$ for $\sigma \in Z$ as in \cite[Lemma 8.18]{hirshbergszabowinterwu2017}: They are $2(d+1)(d+2)(2d+3)$-Lipschitz and form a partition of unity for $|Z| \subseteq |\mathrm{C} Z|$ which is subordinate to the cover $\{ V_\sigma : \sigma \in Z \}$ where
\begin{gather*}
  V_\sigma
  = \left \{ (z_v)_v \in |Z| \mid z_v > z_{v'} \text{ for all } v \in \sigma \text{ and } v' \in Z_0\setminus \sigma \right \}
  \eqstop
\end{gather*}
Put
\begin{gather*}
  \varphi_\sigma = \nu_\sigma \circ \Phi: X \to [0,1]
  \eqstop
\end{gather*}
We get that $\sum_{\sigma \in I} \varphi_\sigma(x) = 1$ for $x \in A$ since $\Phi(A) \subseteq |Z|$ and $\sum_{\sigma} \nu_\sigma (z) = 1$ for $z \in |Z|$.  Since $\Phi$ is $(K, \frac{\varepsilon}{2 (d+1)(d+2)(2d+3)})$-F{\o}lner and $\nu_\sigma$ is $2(d+1)(d+2)(2d+3)$-Lipschitz, we find that for $g \in K$ we have
\begin{align*}
  |\nu_\sigma \circ \Phi(g x) - \nu_\sigma \circ \Phi(x)|
  & \leq
    2(d+1)(d+2)(2d+3) \mathrm{d}(\Phi(gx), \Phi(x)) \\
  & \leq
    2(d+1)(d+2)(2d+3) \frac{\varepsilon}{2(d+1)(d+2)(2d+3)} \\
  & =
    \varepsilon
    \text{,}
\end{align*}
so that $\varphi_\sigma$ is an $(K, \varepsilon)$-F{\o}lner function.  The remaining properties of $(\varphi_\sigma)_\sigma$ follow as in \cite{hirshbergszabowinterwu2017}: since the support of $\nu_\sigma$ is contained in an open star, the support of $\varphi_\sigma$ is contained in the interior of a tube, and the orthogonality condition on $\nu_\sigma$ implies the one on $\varphi_\sigma$.

We proceed to prove \ref{it:partition-plain} $\Rightarrow$ \ref{it:partition-fattened}. Let $K, L \subseteq G$ be compact, let $\varepsilon > 0$ and $A \subseteq X$ be compact.  Without loss of generality, we may enlarge $K$ and assume that $L \subseteq K$.

Choose constants $\delta_1, \delta_2 > 0$ such that
\begin{gather*}
  \delta_2 < \delta_1\text{,} \quad
  \frac{2(d+1)\delta_1}{(1 - (d+1)\delta_1)^2} < \frac{\varepsilon}{2} \quad \text{ and } \quad
  \frac{\delta_1 + \delta_2}{1 - (d+1)\delta_1} < \frac{\varepsilon}{2}
  \eqstop
\end{gather*}
By \ref{it:partition-plain} there is a partition of unity $(\psi_i)_{i \in I}$ for $A \subseteq X$ that consist of $(K, \delta_2)$-F{\o}lner functions, which are supported in the interior of a tube and satisfy the disjointness condition on their supports as in \ref{it:partition-plain}.  Put $\psi'_i = (\psi_i - \delta_1)_+: X \to [0, 1 - \delta_1]$.  Then $\psi'_i$ is $(K, \delta_1 + \delta_2)$-F{\o}lner. Further, the F{\o}lner condition for $\psi_i$ and the fact that $L \subseteq K$ imply that
\begin{gather*}
  L\supp \psi'_i
  \subseteq
  \{x \in X \mid \max_{g \in L} \psi_i(g^{-1}x) \geq \delta_1\}
  \subseteq
  \{x \in X \mid \psi_i(x) \geq \delta_1 - \delta_2\}
  \subseteq
  \supp \psi_i
  \eqstop
\end{gather*}
So $L \supp \psi'_i$ is contained in the interior of a tube.

Put 
\begin{gather*}
  \psi' = \left (\sum_{i \in I} \psi'_i \right ) + \left (\mathbb{1}_X - \sum_{i \in I} \psi_i \right )
  \eqstop
\end{gather*}
Then $\mathbb{1}_X - \psi' = \sum_{i \in I} \psi_i - \psi'_i$ and the image of the latter is contained in the interval $[0, (d+1)\delta_1]$, owing to the fact that $(\psi_i)_{i \in I}$ is the union of $d+1$ orthogonal families of functions. We note for later use that $\psi'$ is bounded from below by $1 - (d+1)\delta_1$. Put
\begin{gather*}
  \varphi_i = \frac{\psi'_i}{\psi'}
  \eqstop
\end{gather*}
Then $(\varphi_i)_{i \in I}$ is a partition of unity for $A \subseteq X$, since $(\sum_i \psi_i)(x) = 1$ for all $x\in A$. Further, $L\supp \varphi_i = L\supp \psi'_i$ is contained in the interior of a tube.  In order see that each $\varphi_i$ is a $(K, \varepsilon)$-F{\o}lner function, we calculate for $g \in K$ and $x \in A$ that
\begin{align*}
  |\varphi_i(gx) - \varphi_i(x)|
  & =
    \left |\frac{\psi'_i(gx)}{\psi'(gx)} - \frac{\psi_i(x)}{\psi'(x)} \right | \\
  & \leq
    \left |\frac{\psi'_i(gx) - \psi'_i(x)}{\psi'(gx)} \right | + \left | \frac{\psi'_i(x)}{\psi'(gx)} - \frac{\psi'_i(x)}{\psi'(x)} \right | \\
  & \leq
    \left |\frac{\psi'_i(gx) - \psi'_i(x)}{\psi'(gx)} \right | + |\psi'_i(x)| \left | \frac{\psi'(x) - \psi'(gx)}{\psi'(gx) \psi'(x)} \right | \\
  & \leq
    \frac{\delta_1 + \delta_2}{1 - (d+1)\delta_1} + 1 \frac{2(d+1)\delta_1}{(1 - (d+1)\delta_1)^2} \\
  & <
    \varepsilon
    \eqstop
\end{align*}

Let us next prove \ref{it:partition-fattened} $\Rightarrow$ \ref{it:cover-fattened}.  Fix $L \subseteq G$ be compact and $A \subseteq X$ be compact as given by \ref{it:cover-fattened}, let $K \subseteq G$ be some compact subset and let $(\varphi_i)_{i \in I}$ be a partition of unity for $A \subseteq X$ as provided by \ref{it:partition-fattened}.  Then $U_i = (\supp \varphi_i)^\circ$, $i \in I$ defines the desired open cover of $A$.

The implication \ref{it:cover-fattened} $\Rightarrow$ \ref{it:tube-dim} is immediate from the definition of the tube dimension.
\end{proof}


\section{Nuclear dimension estimates from tube dimension}
\label{sec:nuclear-dimension}

In this section we obtain upper bounds for the nuclear dimension of crossed products by connected groups in terms of the tube dimension of a dynamical system.  We proceed by first establishing results on transformation groupoids, generalising results from \cite[Section 7]{hirshbergwu2021} and then employ these in a technically refined approach compared to the similar proof strategy of \cite[Theorem 8.1]{hirshbergwu2021}.

\subsection{Restriction to open tubes}
\label{subsec:restriction-tube}

We start by describing how restrictions to open tubes affect transformation groupoids and crossed product \Cstar-algebras.  For the present subsection, we fix the following notation.  Let $G \grpaction{} X$ be an action and set $\cG = G \ltimes X$. Let $K$ be a compact identity neighbourhood of $G$ and let $S$ be a $K$-slice in $X$. We are interested in the relationship between the restriction $\mathcal{G}|_{KS}$ to the corresponding tube $KS$ and the product groupoid $\pairtube = \pair(K) \times S$, where $\pair(K)$ is the pair groupoid associated with $K$ and $S$ is considered as a topological space. Note that by definition of a $K$-slice these groupoids have homeomorphic unit spaces since $(\mathcal{G}|_{KS})^{(0)} = KS$ while $(\pairtube)^{(0)} = K \times S$. We also set $\pairtubeopen = \pair(K^{\circ}) \times \boxint{S}$, which is an open subgroupoid of $\pairtube$.

\begin{proposition}
  \label{prop:pair-groupoid}
  Consider the map
  \begin{gather*}
    \tau \colon
    \pairtube \to \mathcal{G}|_{KS} \colon
    (g,h,x) \mapsto (gh^{-1},hx)
    \eqstop
  \end{gather*}
  Then the following statements hold true.
  \begin{enumerate}
  \item \label{it:pair-groupoid:cont-inj}
    $\tau$ is a continuous, injective groupoid homomorphism.
  \item \label{it:pair-groupoid:image}
    The image of $\tau$ is equal to
    \begin{gather*}
      \{ (g,x) \in \cG|_{KS} : \mathrm{proj}(x) = \mathrm{proj}(gx) \}
      \eqcomma
    \end{gather*}
    where $\mathrm{proj} \colon KS \to S$ denotes the tube projection.  Similarly, we have
    \begin{gather*}
      \tau(\pairtubeopen) = \{ (g,x) \in \cG|_{(KS)^\circ} : \mathrm{proj}(x) = \mathrm{proj}(gx) \}
    \end{gather*}
  \item \label{it:pair-groupoid:clopen-image}
    The set $\tau(\pairtubeopen)$ is clopen in $\cG|_{(KS)^\circ}$.
  \end{enumerate}
\end{proposition}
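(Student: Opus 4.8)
The plan is to verify the three assertions in turn, treating \ref{it:pair-groupoid:cont-inj} and \ref{it:pair-groupoid:image} as warm-ups and reserving the real work for the clopenness in \ref{it:pair-groupoid:clopen-image}. For \ref{it:pair-groupoid:cont-inj}, continuity of $\tau(g,h,x) = (gh^{-1},hx)$ is clear since it is built from the group operations and the action. For injectivity, if $\tau(g,h,x) = \tau(g',h',x')$ then $hx = h'x'$, and since $S$ is a $K$-slice the map $K \times S \to X$ is injective (\cref{rem:slice-homeomorphic}), forcing $h = h'$ and $x = x'$, whence $gh^{-1} = g'h'^{-1}$ gives $g = g'$. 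For the homomorphism property I would simply unwind the definitions: two elements $(g,h,x),(h',k,x')$ of $\pairtube$ are composable iff $(h,x)=(h',x')$, their product is $(g,k,x)$, and a direct computation gives $\tau(g,h,x)\tau(h,k,x) = (gh^{-1}\cdot hk^{-1},kx) = (gk^{-1},kx) = \tau(g,k,x)$; that $\tau$ sends units to units and respects inversion is checked the same way.

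For \ref{it:pair-groupoid:image}, let $\mathrm{proj}\colon KS \to S$ and $\kappa\colon KS \to K$ be the two coordinate maps of the homeomorphism $KS \cong K \times S$. If $(a,y)=\tau(g,h,x)$ then $y = hx$ and $ay = gx$, so $\mathrm{proj}(y) = x = \mathrm{proj}(ay)$, giving one inclusion. Conversely, given $(a,y)\in\cG|_{KS}$ with $\mathrm{proj}(y)=\mathrm{proj}(ay)=:x$, write $y = hx$ and $ay = gx$ with $g,h\in K$; then $ahx = gx$, and freeness of the action forces $ah = g$, i.e. $a = gh^{-1}$, so $(a,y) = \tau(g,h,x)$. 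For the open version I would additionally invoke \eqref{eq:box_interior}, which gives $(KS)^\circ = \mathring{K}\boxint{S}$: combined with uniqueness of the $K\times S$-decomposition this shows a point of $(KS)^\circ$ has its $\kappa$-coordinate in $\mathring{K}$ and its $\mathrm{proj}$-coordinate in $\boxint{S}$, matching the coordinates of points of $\tau(\pairtubeopen) = \tau(\pair(\mathring{K})\times\boxint{S})$.

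For \ref{it:pair-groupoid:clopen-image}, closedness is the easy half. The map $\Theta\colon \cG|_{(KS)^\circ}\to S\times S$, $(a,y)\mapsto(\mathrm{proj}(y),\mathrm{proj}(ay))$, is continuous (source, range and $\mathrm{proj}$ all are), and by \ref{it:pair-groupoid:image} we have $\tau(\pairtubeopen) = \Theta^{-1}(\Delta_S)$ where $\Delta_S\subseteq S\times S$ is the diagonal; since $S$ is Hausdorff, $\Delta_S$ is closed, hence so is $\tau(\pairtubeopen)$. Openness is the crux and the step I expect to be the main obstacle, since the condition $\mathrm{proj}(y)=\mathrm{proj}(ay)$ looks like a closed, not an open, condition. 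The resolution is a local rigidity argument. Fix $(a_0,y_0)\in\tau(\pairtubeopen)$, write $y_0 = h_0x_0$ with $h_0=\kappa(y_0)\in\mathring{K}$ and $x_0=\mathrm{proj}(y_0)\in\boxint{S}$, and note $a_0h_0 = \kappa(a_0y_0)\in\mathring{K}$ by the open version of \ref{it:pair-groupoid:image}. The map $(a,y)\mapsto a\,\kappa(y)$ is continuous on $\cG|_{(KS)^\circ}$ and equals $a_0h_0\in\mathring{K}$ at $(a_0,y_0)$, so it maps some neighbourhood $\cN$ of $(a_0,y_0)$ into the open set $\mathring{K}$. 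For $(a,y)\in\cN$ I would write $y = hx$ with $h=\kappa(y)\in\mathring{K}$ and $x=\mathrm{proj}(y)\in\boxint{S}$; then $ay = (ah)x$ with $ah = a\,\kappa(y)\in\mathring{K}$ and $x\in\boxint{S}$, so by \emph{uniqueness} of the $K\times S$-decomposition of $ay$ we are forced to have $\mathrm{proj}(ay) = x = \mathrm{proj}(y)$. Hence $\cN\subseteq\tau(\pairtubeopen)$, establishing openness. The one technical point to nail down is that $\kappa$ and $\mathrm{proj}$ really are continuous on $(KS)^\circ$ and that the decomposition there is unique, both of which follow from $KS\cong K\times S$ together with \eqref{eq:box_interior}.
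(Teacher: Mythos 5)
Your proof is correct. For parts \ref{it:pair-groupoid:cont-inj} and \ref{it:pair-groupoid:image}, and for the closedness half of \ref{it:pair-groupoid:clopen-image}, you follow essentially the same route as the paper: the same computation for the homomorphism property, the slice property for injectivity, freeness of the action for the converse inclusion in the image description, equation \eqref{eq:box_interior} for the open variant, and continuity of $\mathrm{proj}$ (pulling back the diagonal of $S \times S$) for closedness.

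Where you genuinely diverge is the openness of $\tau(\pairtubeopen)$, and your instinct that this is the crux is right. The paper disposes of it in one line: $\tau$ is a continuous injection from a compact space into a Hausdorff space, hence a homeomorphism onto its image, so $\tau(\pairtubeopen)$ is open because $\pairtubeopen$ is open in $\pairtube$. As written, that argument only yields openness of $\tau(\pairtubeopen)$ inside $\tau(\pairtube)$, and $\tau(\pairtube)$ is compact, hence \emph{closed} --- not obviously open --- in $\cG|_{KS}$; upgrading to openness in the ambient groupoid requires a further argument which the paper elides. Your local rigidity argument supplies exactly this missing step in a self-contained way: near a point of $\tau(\pairtubeopen)$ the continuous map $(a,y) \mapsto a\,\kappa(y)$ stays inside the open set $\mathring{K}$, so $ay = \bigl(a\kappa(y)\bigr)\,\mathrm{proj}(y)$ is a legitimate $K \times S$-decomposition of $ay$, and uniqueness of such decompositions forces $\mathrm{proj}(ay) = \mathrm{proj}(y)$ on a whole neighbourhood; thus the a priori closed-looking condition cutting out $\tau(\pairtubeopen)$ is locally automatic. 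Your route costs a paragraph more than the paper's, but it buys a complete proof at precisely the point where the paper's argument takes a leap.
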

\begin{proof}
  First observe that $\tau$ is well-defined, as for $g,h \in K$ and $x \in S$ the computation
  \begin{align*}
    hx & = s(gh^{-1},hx) \eqcomma \\
    gx & = gh^{-1}hx = r(gh^{-1},hx)
    \eqcomma
  \end{align*}
  shows that $(gh^{-1},hx) \in \mathcal{G}|_{KS}$.

  We start by showing \ref{it:pair-groupoid:cont-inj}.  In order to show that $\tau$ is a groupoid homomorphism, we compute for $(g,h,x),(h,k,x) \in \pairtube$ that
  \begin{gather*}
    \tau((g,h,x),(h,k,x))
    =
    \tau(g,k,x)
    =
    (gk^{-1},kx)
    =
    (gh^{-1},hx)(hk^{-1},kx)
    =
    \tau(g,h,x)\tau(h,k,x)
    \eqstop
\end{gather*}
  Its defining formula shows that $\tau$ is continuous. To show injectivity, suppose that for a pair of elements $(g,h,x),(g',h',x') \in \pairtube$ we have $\tau(g,h,x) = \tau(g',h',x')$.  In different terms, we suppose $(gh^{-1}, hx) = g'(h')^{-1}, h'x')$.  Then $hx=h'x'$ implies $h=h'$ and $x=x'$ by the definition of a slice, so that subsequently $gh^{-1} = g'h'^{-1} = g'h^{-1}$ forces $g=g'$. This shows that $\tau$ is injective.

  We next prove \ref{it:pair-groupoid:image}.  If $(g,h,x) \in \pair(K) \times S$, its image under $\tau$ belongs to the claimed set, since $\mathrm{proj}(hx) = x = \mathrm{proj}(gx) = \mathrm{proj}(gh^{-1}hx)$.  Conversely, given an element $(g',x') \in \cG|_{KS}$ that satisfies $\mathrm{proj}(x') = \mathrm{proj}(g'x')$, we write $x' = hx$ and $g'x' = gx$ for certain $h,g \in K$ and $x \in S$. Then it follows that $g'hx = gx$ so that freeness of the action $G \grpaction{} X$ implies that $g' = gh^{-1}$. Hence we obtain that $(g',x') = (gh^{-1},hx) = \tau(g,h,x)$ lies in the image of $\tau$.  Using equation \eqref{eq:box_interior} of \Cref{lem:box_interior}, saying that $(KS)^\circ = K^{\circ}\boxint{S}$, we infer that also $\tau(\pairtubeopen) = \{ (g,x) \in \cG|_{(KS)^\circ} : \mathrm{proj}(x) = \mathrm{proj}(gx) \}$ holds.

  Let us finish by proving \ref{it:pair-groupoid:clopen-image}.  Since $\mathrm{proj}$ is continuous, the description of $\tau(\pairtubeopen)$ obtained in \ref{it:pair-groupoid:image} shows that $\tau(\pairtubeopen)$ is closed in $\cG|_{(KS)^{\circ}}$. Further, since $\tau$ is a continous injection from a compact space to a Hausdorff space, it is a homeomorphism onto its image. In particular $\tau(\pairtubeopen)$ is open in $\cG|_{KS}$ since $\pairtubeopen$ is open in $\pairtube$. To conclude, we use the fact that $\cG|_{(KS)^{\circ}}$ is an open subset of $\cG|_{KS}$, so that it follows that $\tau(\pairtubeopen)$ is open in $\cG|_{(KS)^{\circ}}$.
\end{proof}
Combining the previous result with \cite[Lemma 6.1 and Theorem 6.2]{hirshbergwu2021}, we obtain the following maps.
\begin{corollary}
  \label{cor:conditional-expectation}
  Consider the inclusion $\pairtubeopen \subseteq \cG|_{(KS)^{\circ}}$ given by the map $\tau$ of \cref{prop:pair-groupoid}. The following statements hold true.
  \begin{enumerate}
  \item The inclusion $\contc(\pairtubeopen) \to \contc(\cG|_{(KS)^{\circ}})$ extends to an injective *-homomorphism $\Cstarred(\cH) \to \Cstarred(\cG|_{(KS)^{\circ}})$.
  \item The restriction $\contc(\cG|_{(KS)^{\circ}}) \to \contc(\pairtubeopen)$ extends to a conditional expectation $\rE \colon \Cstarred(\cG|_{(KS)^{\circ}}) \to \Cstarred(\pairtubeopen)$.
  \end{enumerate}
\end{corollary}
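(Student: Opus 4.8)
The plan is to use the map $\tau$ of \cref{prop:pair-groupoid} to realise $\pairtubeopen$ as a \emph{clopen subgroupoid of $\cG|_{(KS)^\circ}$ with the same unit space}, and then to feed this configuration into the general machinery for such subgroupoids established in \cite[Lemma 6.1 and Theorem 6.2]{hirshbergwu2021}, which produces exactly an inclusion of reduced groupoid C*-algebras together with a conditional expectation. In this way both assertions follow at once, and the only genuine work is the verification of the hypotheses of the cited results.

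First I would assemble the structural input from \cref{prop:pair-groupoid}. By part \ref{it:pair-groupoid:cont-inj} the map $\tau$ is a continuous injective groupoid homomorphism, and since $\pairtube$ is compact it is a homeomorphism onto its image; by part \ref{it:pair-groupoid:clopen-image} the image $\tau(\pairtubeopen)$ is clopen in $\cG|_{(KS)^\circ}$. It then remains to identify the unit spaces: a unit $(k,k,x)$ of $\pairtubeopen$ with $k \in \mathring{K}$ and $x \in \boxint{S}$ is sent by $\tau$ to $(e, kx)$, and as $(k,x)$ ranges over $\mathring{K} \times \boxint{S}$ the points $kx$ exhaust $\mathring{K}\,\boxint{S} = (KS)^\circ$ by \eqref{eq:box_interior}. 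Hence $\tau(\pairtubeopen)$ is a clopen subgroupoid whose unit space is all of $(KS)^\circ$, the unit space of $\cG|_{(KS)^\circ}$.

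Next I would match up the Haar systems so that $\tau$ becomes an isomorphism of locally compact groupoids with Haar systems onto $\tau(\pairtubeopen)$. Here I equip $\pairtube = \pair(K) \times S$ with the product Haar system whose $\pair(K)$-factor is given by the Haar measure $\haar$ of $G$ restricted to $K$, and then restrict it to the open subgroupoid $\pairtubeopen$. The compatibility of $\tau$ with the transformation-groupoid Haar system is a fibrewise statement: over the unit $k_0 x_0 \in (KS)^\circ$ the map $\tau$ sends $k' \in \mathring{K}$ to the element of $\cG^{k_0 x_0}$ corresponding to $g = k_0 (k')^{-1}$, and one checks that the pushforward of $\haar|_{\mathring{K}}$ along $k' \mapsto k_0 (k')^{-1}$ agrees with the restriction of $\haar^{k_0 x_0}$ to this fibre. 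This uses left-invariance of $\haar$ together with invariance under inversion, i.e.\ unimodularity of $G$, which holds for the groups of polynomial growth relevant here. Consequently $\tau$ preserves the Haar systems and induces an isomorphism $\Cstarred(\pairtubeopen) \cong \Cstarred(\tau(\pairtubeopen))$ compatible with the inclusions of the $\contc$-algebras.

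With this identification in place, both conclusions become immediate applications of \cite[Lemma 6.1 and Theorem 6.2]{hirshbergwu2021} to the clopen subgroupoid $\tau(\pairtubeopen) \subseteq \cG|_{(KS)^\circ}$ with common unit space: Lemma 6.1 upgrades the extension-by-zero map $\contc(\pairtubeopen) \to \contc(\cG|_{(KS)^\circ})$ to the injective $*$-homomorphism of assertion (i), and Theorem 6.2 turns the restriction map $\contc(\cG|_{(KS)^\circ}) \to \contc(\pairtubeopen)$ into the conditional expectation $\rE$ of assertion (ii). The main obstacle is the Haar-system bookkeeping of the previous paragraph: the topological identification is handed to us by \cref{prop:pair-groupoid} and the C*-algebraic output is black-boxed by the cited results, so the only point requiring real care is that the product Haar system on $\pairtubeopen$ is carried by $\tau$ to the restricted Haar system of $\cG|_{(KS)^\circ}$.
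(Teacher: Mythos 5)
Your proposal is correct and is essentially the paper's own proof: the paper derives this corollary in one line by combining \cref{prop:pair-groupoid} with \cite[Lemma 6.1 and Theorem 6.2]{hirshbergwu2021}, exactly as you do, and your unit-space and Haar-system verifications are precisely the details the paper leaves implicit. The only caveat is that your Haar-system matching invokes unimodularity of $G$, whereas the corollary is later applied (in \cref{thm:finite-nuclear-dimension}) to arbitrary connected amenable groups, which need not be unimodular; this is harmless, since the restriction to $\tau(\pairtubeopen)$ of the Haar system of $\cG|_{(KS)^{\circ}}$ is still a product Haar system for a full-support measure on $K^{\circ}$ (a modular-function twist of $\haar$), and by \cref{ex:pair_groupoid} the pair-groupoid C*-algebra does not depend on that choice.
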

The next lemma strongly localises subsets of tubes in the presence of connectedness.  Its special case for $\RR$ was used without explicit reference in \cite[p.36, line after (8.1.4)]{hirshbergwu2021}.
\begin{lemma}
  \label{lem:concentration-in-slices}
  Let $G \grpaction{} X$ be an action.  Let $L \subseteq G$ be a connected open subset and let $S$ be a $K$-slice.  If $Lx \subseteq (KS)^\circ$ for some $x \in X$, then there is $s \in S$ such that $Lx \subset Ks$.
\end{lemma}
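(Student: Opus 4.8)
The plan is to show that the tube projection is constant along $Lx$, and then read off the fibre it lands in. Recall from \cref{rem:slice-homeomorphic} that $K \times S \to KS$, $(k,s) \mapsto ks$, is a homeomorphism, so every point of the tube $KS$ has a unique expression $ks$ with $k \in K$ and $s \in S$, and the tube projection $\mathrm{proj} \colon KS \to S$ records the second coordinate. Consequently the fibres $\mathrm{proj}^{-1}(s) = Ks$, $s \in S$, partition $KS$. Since $Lx \subseteq (KS)^{\circ} \subseteq KS$, the assignment $\phi \colon L \to S$, $\phi(g) = \mathrm{proj}(gx)$, is well defined, and the desired conclusion $Lx \subseteq Ks$ is exactly the statement that $\phi$ is the constant map with value $s$. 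As $L$ is connected, it suffices to prove that $\phi$ is locally constant.

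The heart of the argument is a local computation. I would fix $g_0 \in L$ and write $g_0 x = k_0 s_0$ with $k_0 \in K$ and $s_0 = \phi(g_0) \in S$. Because $g_0 x \in (KS)^{\circ}$, equation \eqref{eq:box_interior} of \cref{lem:box_interior} also provides a decomposition $g_0 x = \kappa \sigma$ with $\kappa \in \mathring{K}$ and $\sigma \in \boxint{S}$; uniqueness of the slice decomposition then forces $k_0 = \kappa \in \mathring{K}$. Now set $N = \mathring{K}\, k_0^{-1} g_0$. Right translation by $k_0^{-1} g_0$ is a homeomorphism of $G$, so $N$ is open, and $g_0 = k_0 (k_0^{-1} g_0) \in N$ since $k_0 \in \mathring{K}$. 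For any $g = \kappa' k_0^{-1} g_0 \in N$ with $\kappa' \in \mathring{K}$ one computes $gx = \kappa' k_0^{-1} g_0 x = \kappa' k_0^{-1} k_0 s_0 = \kappa' s_0 \in K s_0$, whence $\phi(g) = \mathrm{proj}(gx) = s_0$. Thus $\phi$ equals $s_0$ on the open neighbourhood $N \cap L$ of $g_0$ in $L$, so $\phi$ is locally constant.

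To finish, since $L$ is connected and $\phi \colon L \to S$ is locally constant, $\phi$ is constant, say with value $s \in S$; then $gx \in \mathrm{proj}^{-1}(s) = Ks$ for every $g \in L$, that is, $Lx \subseteq Ks$, as claimed. The one delicate point — and the place where the hypothesis must be used carefully — is the step guaranteeing $k_0 \in \mathring{K}$. This is precisely where the assumption $Lx \subseteq (KS)^{\circ}$, rather than merely $Lx \subseteq KS$, is essential, entering through the identity $(KS)^{\circ} = \mathring{K}\,\boxint{S}$. Were $g_0 x$ allowed to sit on the tube boundary (so that possibly $k_0 \in \partial K$), the set $N$ would fail to be a neighbourhood of $g_0$, the fibre through $g_0 x$ need not absorb a whole neighbourhood of the orbit arc, and the projection could genuinely jump; connectedness of $L$ alone would not then force $\phi$ to be constant.
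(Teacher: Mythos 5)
Your proof is correct, and its local core is the same as the paper's: both arguments combine equation \eqref{eq:box_interior} of \Cref{lem:box_interior} with the injectivity of $K \times S \to KS$ to see that a point $g_0x \in (KS)^\circ$ decomposes as $k_0s_0$ with $k_0 \in \mathring{K}$, and then move $g_0$ inside $\mathring{K}k_0^{-1}g_0$ to stay in the single fibre $Ks_0$. Where you genuinely diverge is in how connectedness of $L$ is exploited. The paper defines $\psi(l) = \pi(lx)$, records only that each non-empty fibre $\psi^{-1}(\{s\})$ has non-empty interior, deduces from second countability of $G$ that the partition of $L$ into fibres is countable, hence that $\psi(L)$ is countable, and concludes since a countable connected subspace of the compact Hausdorff (hence Tychonoff) space $S$ must be a singleton. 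You instead upgrade the same translation computation to full local constancy of the projection along $L$ and invoke the standard fact that a locally constant map on a connected space is constant. Your route buys two things: it removes the appeal to second countability of $G$, which the paper's proof uses but which is not among the hypotheses of the lemma (connected locally compact groups need not be second countable); and it sidesteps the somewhat delicate final step, since ``countable, connected and Hausdorff'' alone does not force a singleton --- infinite countable connected Hausdorff spaces exist --- and the paper's conclusion really rests on $S$ being Tychonoff. Nothing is lost in exchange: the weaker ``non-empty interior'' statement is not easier to obtain than local constancy, because the same computation applies at every point of every fibre. Your closing remark correctly identifies where the hypothesis $Lx \subseteq (KS)^\circ$, rather than $Lx \subseteq KS$, is indispensable.
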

\begin{proof}
  Write $B = KS$, denote by $\pi \colon B \to S$ the projection on the slice $S$ and consider the map $\psi \colon L \to S$ given by $l \mapsto \pi(lx)$.  We have to show that the image of $\psi$ is a singleton.  Since $\psi$ is continuous and $L$ is connected its image $\psi(L)$ is connected too.  Assume that $s \in \psi(L)$.  Since $Lx \subseteq B^\circ$, there is $g \in L$ and $k \in K^\circ$ such that $gx = ks$. Let $U$ be a symmetric identity neighbourhood in $G$ such that $Uk \subseteq K$ and $Ug \subseteq L$.  Then $ugx = uks \in Ks$ for all $u \in U$, so that $\psi^{-1}(\{s\})$ has non-empty interior.  So $L = \bigsqcup_{s \in S} \psi^{-1}(\{s\})$ is a partition of $L$ into sets with non-empty interior.  Since $G$ is second countable, this implies that the image of $\psi$ is countable.  So $\psi(L)$ is a countable, connected Hausdorff space, and hence a singleton.
\end{proof}

\subsection{The estimate}
\label{sec:estimate}

The aim of this section is to establish bounds on the nuclear dimension of crossed products associated with actions connected amenable groups in terms of the tube dimension of the action.  We will apply this to Lie groups of polynomial growth, for which estimates on the tube dimension can be established.

The next results shows that F{\o}lner functions, as introduced in \cref{sec:partitions}, give rise to quasi-central elements in the $\Lone$-crossed products.  This replaces arguments based on Lipschitz functions used in \cite{hirshbergszabowinterwu2017,hirshbergwu2021} that are only adapted to $\RR$-flows.
\begin{lemma}
  \label{lem:folner-quasi-central-for-I-norm}
  Let $G \curvearrowright X$ be an action and consider the groupoid $G \ltimes X$. Let $K \subseteq G$ be a symmetric compact subset and let $A \subseteq X$ be a compact subset.  Assume that $(\vphi_i)_{i \in I}$ are $(K, \veps)$-F{\o}lner functions in $\contb(X)$ such that $(\vphi_i^2)_i$ is a partition of unity for $A$.  Further assume that $I = I^{(0)} \sqcup \dotsm \sqcup I^{(d)}$ such that $\supp \vphi_i \cap \supp \vphi_j = \emptyset$ for all $0 \leq l \leq d$ and $i,j \in I^{(l)}$.  Then for all $a \in \contc(K \times A) \subseteq \contc(G \ltimes X)$, we have
  \begin{gather*}
    \|\sum_i \vphi_i a \vphi_i - a\|_I \leq 2 (d + 1) \veps^2 \|a\|_I
    \eqstop
  \end{gather*}
\end{lemma}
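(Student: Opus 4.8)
The plan is to reduce the statement to a single \emph{pointwise} estimate on the ``symbol'' of $b := \sum_i \vphi_i a \vphi_i - a$, viewed as a function on the groupoid $\cG = G \ltimes X$, and then transport that estimate through the $I$-norm. Using the multiplier formulas $(fa)(\alpha) = f(\rmr(\alpha))a(\alpha)$ and $(af)(\alpha) = a(\alpha)f(\rms(\alpha))$ recalled in \Cref{sec:groupoids}, together with $\rmr(g,x) = gx$ and $\rms(g,x) = x$, I would first record that for $\alpha = (g,x)$
\[
  (\vphi_i a \vphi_i)(g,x) = \vphi_i(gx)\,a(g,x)\,\vphi_i(x), \qquad (\vphi_i^2 a)(g,x) = \vphi_i(gx)^2 a(g,x), \qquad (a\vphi_i^2)(g,x) = a(g,x)\,\vphi_i(x)^2 .
\]
A one-sided rewriting of $a$ (say $a = \sum_i a\vphi_i^2$, legitimate since the source of any $\alpha \in \supp a$ lies in $A$) would only produce the coefficient $\sum_i(\vphi_i(gx)-\vphi_i(x))\vphi_i(x)$, which the F{\o}lner bound controls only to first order in $\veps$. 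To gain the quadratic factor I would instead symmetrise.

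The conceptual heart of the argument is the binomial identity $2\vphi_i(gx)\vphi_i(x) - \vphi_i(gx)^2 - \vphi_i(x)^2 = -(\vphi_i(gx)-\vphi_i(x))^2$. Writing $a = \tfrac12\big(\sum_i \vphi_i^2 a + \sum_i a\vphi_i^2\big)$ and subtracting, I would obtain the pointwise identity
\[
  \Big(\sum_i \vphi_i a \vphi_i - a\Big)(g,x) = -\tfrac12\Big(\sum_i (\vphi_i(gx)-\vphi_i(x))^2\Big) a(g,x) =: c(g,x)\,a(g,x) .
\]
It is precisely the squaring that converts the F{\o}lner bound into a second power of $\veps$, which is why F{\o}lner functions (rather than a one-sided Lipschitz-type estimate) suffice here.

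Next I would bound $c$ uniformly on $\supp a$. For $g \in K$ the $(K,\veps)$-F{\o}lner condition of \Cref{def:folner-function} gives $|\vphi_i(gx) - \vphi_i(x)| < \veps$, so each summand is $< \veps^2$. The coloring $I = I^{(0)} \sqcup \dotsm \sqcup I^{(d)}$ with pairwise disjoint supports inside each class means that for fixed $(g,x)$ at most one index of each class $I^{(l)}$ has $x \in \supp \vphi_i$ and at most one has $gx \in \supp \vphi_i$; hence at most $2(d+1)$ summands are nonzero. Thus $\sum_i(\vphi_i(gx)-\vphi_i(x))^2 < 2(d+1)\veps^2$ and $\|c\|_\infty \le (d+1)\veps^2 \le 2(d+1)\veps^2$. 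Finally, since $b(\alpha) = c(\alpha)a(\alpha)$ gives $|b(\alpha)| \le \|c\|_\infty\,|a(\alpha)|$ and likewise $|b(\alpha^{-1})| \le \|c\|_\infty\,|a(\alpha^{-1})|$, both integrals defining $\|b\|_I$ are dominated by $\|c\|_\infty$ times the corresponding integrals for $a$, yielding $\|b\|_I \le \|c\|_\infty\|a\|_I \le 2(d+1)\veps^2\|a\|_I$.

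The step I expect to require the most care is the support bookkeeping behind the symmetric identity: the displayed formula for $c$ uses $\sum_i \vphi_i^2 = 1$ at \emph{both} the source $x$ and the range $gx$ of each $\alpha \in \supp a$. The source condition is immediate from $a \in \contc(K \times A)$, but the range point $gx$ ranges over $K A$, so I must extract from the partition-of-unity hypothesis (and the fact that $K$ is a symmetric identity neighbourhood) that the identity $\sum_i \vphi_i^2 = 1$ holds at $gx$ as well on the relevant region; without this one only recovers a linear bound. Once that point is settled, the combinatorial count of nonzero terms via the coloring and the passage to the $I$-norm are routine.
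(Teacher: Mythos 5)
The step you flagged as ``requiring the most care'' is a genuine gap, and it cannot be closed from the stated hypotheses. Your identity $b = ca$ requires $\sum_i \vphi_i^2(gx) = 1$ at every range point $gx$ of $\supp a$; these points run over $KA$, while the hypothesis gives the partition of unity only on $A$, and no appeal to the symmetry of $K$ can repair this. Concretely, take $G = X = \RR$ with the translation action, $K = [-1,1]$, $A = [0,1]$, $d = 0$, and the single function $\vphi_1(x) = \max\bigl(0,\, 1 - \tfrac{\veps}{2}\operatorname{dist}(x,A)\bigr)$: it is $(K,\veps)$-F{\o}lner and $(\vphi_1^2)$ is a partition of unity for $A$, yet $\vphi_1^2 < 1$ everywhere outside $A$, so your displayed identity fails at any $(g,x) \in \supp a$ with $gx \notin A$. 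The algebra makes the discrepancy explicit: using $\sum_i \vphi_i^2(x) = 1$ at the source one has $\sum_i \vphi_i(x)\bigl(\vphi_i(gx)-\vphi_i(x)\bigr) = \tfrac12\bigl(\sum_i \vphi_i^2(gx) - 1\bigr) - \tfrac12\sum_i\bigl(\vphi_i(gx)-\vphi_i(x)\bigr)^2$, and the first term is of first order in $\veps$ unless the partition of unity extends to $KA$. The paper's own proof is precisely the one-sided version of your argument: for $x \in A$ and $g \in K$ it writes $\sum_i\vphi_i(x)\vphi_i(gx) - 1 = \sum_i \vphi_i(x)(\vphi_i(gx)-\vphi_i(x))$ using the partition of unity at the source only, bounds this via the colouring, and transports the pointwise bound through the two $I$-norm integrals exactly as you do (symmetry of $K$ enters for the integral over $\alpha^{-1}$, where the partition of unity is invoked at $gx \in A$ instead).

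Your caveat that without the range-side identity one ``only recovers a linear bound'' is exactly right, and the linear bound is in fact all that is true: in the example above, for $a \geq 0$ concentrated near $(1,1) \in K \times A$ one gets $\|\vphi_1 a \vphi_1 - a\|_I$ of order $\tfrac{\veps}{2}\|a\|_I$, which exceeds $2(d+1)\veps^2\|a\|_I$ once $\veps < \tfrac14$; so the quadratic constant in the statement is itself unattainable under these hypotheses. (The paper reaches it through the estimate $|\sum_i \vphi_i(x)(\vphi_i(gx)-\vphi_i(x))| \leq \sum_i\vphi_i^2(x)\cdot\sum_i(\vphi_i(gx)-\vphi_i(x))^2$, which is Cauchy--Schwarz written without the square roots; the valid form yields $\sqrt{2(d+1)}\,\veps$.) None of this damages the application: in the proof of \Cref{thm:finite-nuclear-dimension} the F{\o}lner parameter $\delta$ is free, so a bound linear in $\veps$ serves equally well after rechoosing $\delta$. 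Alternatively, if the lemma is restated so that $(\vphi_i^2)$ is a partition of unity on $\overline{KA}$ --- a strengthening that \Cref{prop:characterisation-tube-dimension} can supply in the application, since it produces partitions of unity over any prescribed compact set --- then your symmetrisation argument is correct and does give the quadratic bound.
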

\begin{proof}
  Let $a \in \contc(G \times X)$ and $f \in \contb(X)$.  Then we have
  \begin{gather*}
    (af)(g,x) = a(g,x) f(x) \quad \text{ and } \quad (fa)(g,x) = a(g,x)f(gx)
    \eqstop
  \end{gather*}
  So we calculate for $a \in \contc(K \times A)$, for $g \in G$ and $x \in A$ that
  \begin{align*}
    |\sum_i \vphi_i(x)\vphi_i(gx) - 1|
    & =
      |\sum_i \vphi_i(x)\vphi_i(gx) - \vphi_i^2(x)| \\
    & =
      |\sum_i \vphi_i(x)(\vphi_i(gx) - \vphi_i(x))| \\
    & \leq
      \sum_i \vphi_i^2(x) \cdot \sum_i (\vphi_i(gx) - \vphi_i(x))^2 \\
    & \leq
      1 \cdot 2 (d + 1) \veps^2
      \eqstop
  \end{align*}
  For the last inequality we used the fact that $\vphi_i$ is a $(K, \veps)$-F{\o}lner function for each $i$ as well as the fact that there are at most $d + 1$ indices $i$ satisfying $\vphi_i(x) \neq 0$ and likewise there are at most $d + 1$ indices $i$ satisfying $\vphi_i(gx) \neq 0$. We can now estimate the following integrals for fixed $x$:
  \begin{align*}
    \int_G | \sum_i \vphi_i a \vphi_i -  a|(g,x) \rmd g
    & =
      \int_G |a(g,x)|  \cdot |\sum_i \vphi_i(gx) \vphi_i(x) -  1| \rmd g \\
    & \leq
      2 (d + 1) \veps^2 \int_G |a(g,x)|   \rmd g \\
    & \leq
      2 (d + 1) \veps^2 \|a\|_I
  \end{align*}
  and similarly
  \begin{align*}
    \int_G | \sum_i \vphi_i a \vphi_i -  a|(g^{-1},gx) \rmd g
    & =
      \int_G |a(g^{-1},gx)| \cdot  |\sum_i \vphi_i(x) \vphi_i(g^{-1}x) -  1| \rmd g \\
    & \leq
      2 (d + 1) \veps^2 \|a\|_I
    \eqstop
  \end{align*}
Taking the supremum of both terms over $x \in X$, we conclude the proof.
\end{proof}
We are now ready to state and prove our main result, establishing nuclear dimension bounds for free actions in terms of the tube dimension and the dimension of the base space.
\begin{theorem}
  \label{thm:finite-nuclear-dimension}
  Let $G \grpaction{} X$ be a free action of a connected amenable group on a second-countable, locally compact Hausdorff space. Then
  \begin{gather*}
    \dimnuc(\conto(X) \rtimes G) 
    \leq
    (\dim X + 1) (\tubedim(G \grpaction{} X) + 1) - 1
  \end{gather*}
\end{theorem}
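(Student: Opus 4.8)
The plan is to estimate $\dimnuc(B)$ for $B = \conto(X) \rtimes G = \Cstarred(G \ltimes X)$ by means of the approximation criterion of \Cref{lem:nuclear_lemma}. This applies because $B$ is separable (by second countability of $G$ and $X$) and nuclear (as $G$ is amenable, so $G \ltimes X$ is an amenable groupoid, cf.\ \Cref{ex:transformation_groupoid}). As dense subset of the unit ball I would take $B_0 = \{ a \in \contc(G \ltimes X) : \|a\|_{\mathrm{red}} \leq 1 \}$, fix a finite $F \subseteq B_0$ together with $\veps > 0$, and record compact sets $K_0 \supseteq \bigcup_{a \in F} \supp_G a$ and $A \supseteq \bigcup_{a \in F} \supp_X a$ as well as $M = \max_{a \in F} \|a\|_I$. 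Writing $N = \dim X$ and $D = \tubedim(G \grpaction{} X)$, the goal is to produce, for each such $F$ and $\veps$, a decomposition witnessing clause (iv) of \Cref{lem:nuclear_lemma} with outer index $d = D$, each block of nuclear dimension at most $N$ and with no internal refinement ($d^{(l)} = 0$); this yields $\sum_{l=0}^{D}(\dimnuc(A^{(l)})+1)(d^{(l)}+1) - 1 \le (N+1)(D+1) - 1$, exactly the asserted bound.

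For the local model, since $G$ is connected I would fix a connected, relatively compact open $L \subseteq G$ with $K_0 \cup K_0^{-1} \subseteq L$, and apply the implication \ref{it:tube-dim} $\Rightarrow$ \ref{it:partition-fattened} of \Cref{prop:characterisation-tube-dimension} with a symmetric Følner parameter $K \supseteq K_0$, fattening $L$, and compact set $A$. This gives a finite partition of unity $(\rho_i)_{i \in I}$ with colouring $I = I^{(0)} \sqcup \dots \sqcup I^{(D)}$, each $\rho_i$ being $(K, \delta)$-Følner with $L \supp \rho_i$ inside the interior of a tube $K_{S,i} S_i$, and with $L \supp \rho_i \cap L \supp \rho_j = \emptyset$ for distinct same-colour $i,j$. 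Passing to $\varphi_i = \sqrt{\rho_i}$ makes $(\varphi_i^2)_i$ a partition of unity on $A$ while keeping each $\varphi_i$ a $(K, \sqrt\delta)$-Følner function, using $|\sqrt s - \sqrt t| \le \sqrt{|s-t|}$. To each tube I attach the open product groupoid $\mathcal{H}^{\mathrm{open}}_i = \pair(K_{S,i}^\circ) \times \boxint{S_i}$ of \Cref{prop:pair-groupoid}; by \Cref{ex:pair_groupoid}, \Cref{ex:top_space_groupoid} and \Cref{ex:product_groupoids} one has $\Cstarred(\mathcal{H}^{\mathrm{open}}_i) \cong \mathcal{K}(\Ltwo(K_{S,i}^\circ)) \otimes \conto(\boxint{S_i})$, whose nuclear dimension equals $\dim \boxint{S_i} \le \dim S_i \le N$ by \Cref{prop:dim_properties} and stability of nuclear dimension under stabilisation. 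I then set $A^{(l)} = \bigoplus_{i \in I^{(l)}} \Cstarred(\mathcal{H}^{\mathrm{open}}_i)$, so $\dimnuc(A^{(l)}) \le N$, and define $\Phi^{(l)}(a) = \bigoplus_{i \in I^{(l)}} \rE_i(\varphi_i a \varphi_i)$ and $\Psi^{(l)}\big((b_i)_i\big) = \sum_{i \in I^{(l)}} \iota_i(b_i)$, where $\iota_i$ and $\rE_i$ are the inclusion and conditional expectation of \Cref{cor:conditional-expectation}. Each $\Phi^{(l)}$ is completely positive, and $\Phi = \bigoplus_l \Phi^{(l)}$ is contractive, being a direct sum of compressions by the $\varphi_i$ followed by conditional expectations.

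The composition is $\Psi(\Phi(a)) = \sum_{i \in I} \iota_i(\rE_i(\varphi_i a \varphi_i))$, and the crucial point is that for $a \in F$ each $\rE_i$ acts as the identity on $\varphi_i a \varphi_i$. Indeed, for $x \in \supp \varphi_i$ one has $L x \subseteq L \supp \varphi_i \subseteq (K_{S,i} S_i)^\circ$, and since $L$ is connected, \Cref{lem:concentration-in-slices} places $Lx$ inside a single translate $K_{S,i} s$; in particular the tube projection agrees on $x$ and on $gx$ for every $g \in K_0 \subseteq L$, so by \Cref{prop:pair-groupoid}\,\ref{it:pair-groupoid:image} the element $\varphi_i a \varphi_i$ is supported in the image of $\iota_i$ and hence $\iota_i \rE_i(\varphi_i a \varphi_i) = \varphi_i a \varphi_i$. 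Thus $\Psi(\Phi(a)) = \sum_i \varphi_i a \varphi_i$, and \Cref{lem:folner-quasi-central-for-I-norm}, applied to the $(K,\sqrt\delta)$-Følner family $(\varphi_i)$ with $(\varphi_i^2)$ a partition of unity (the within-colour disjointness of supports being inherited from that of $L\supp\varphi_i$), gives $\|\Psi(\Phi(a)) - a\|_{\mathrm{red}} \le \|\Psi(\Phi(a)) - a\|_I \le 2(D+1)\delta M$. Choosing $\delta$ small enough at the outset makes this $< \veps$ for all $a \in F$.

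It remains to verify that each $\Psi^{(l)}$ is a completely positive order zero contraction with $d^{(l)} = 0$. Every $\iota_i$ is an injective $*$-homomorphism, hence order zero, and a sum of order zero maps with pairwise orthogonal ranges is again order zero and contractive; since $\iota_i(b)$ has source and range in the open tube $(K_{S,i} S_i)^\circ$, and functions on groupoids restricted to disjoint pieces of the unit space annihilate one another, it suffices that the open tubes used for the down-maps be pairwise disjoint on each colour. I expect this to be the main obstacle: it requires the simultaneous, nested calibration of the three scales --- the Følner parameter $K$, the fattening $L$, and the slice parameters $K_{S,i}$ --- so that, after replacing each $S_i$ by the projection of the relevant support, disjointness of the $L$-fattened supports guaranteed by clause \ref{it:cover-fattened} of \Cref{prop:characterisation-tube-dimension} upgrades to genuine disjointness of the modelling tubes within each colour, while $L$ remains large and connected enough for the concentration argument of the previous paragraph to keep $\rE_i$ lossless. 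Granting this orthogonality, the triple $\big(A = \bigoplus_l A^{(l)}, \Phi, \Psi = \sum_l \Psi^{(l)}\big)$ satisfies all clauses of \Cref{lem:nuclear_lemma} with the bookkeeping recorded in the first paragraph, which completes the proof.
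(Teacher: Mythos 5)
Your proposal is, in structure and in almost every detail, the paper's own proof: the approximation criterion of \Cref{lem:nuclear_lemma}, the F{\o}lner partition of unity from \Cref{prop:characterisation-tube-dimension}, the blocks $\Cstarred(\pairtubeopeni)$ with up-maps $a \mapsto \rE_i(\vphi_i a \vphi_i)$ and down-maps $\Psi_i$ from \Cref{cor:conditional-expectation}, the concentration argument via \Cref{lem:concentration-in-slices}, and the error estimate via \Cref{lem:folner-quasi-central-for-I-norm}. Your explicit square-root step and the explicit enlargement of $L$ to a connected open set are details the paper glosses over, and replacing the paper's Baire category normalisation by $M = \max_{a \in F} \|a\|_I$ is equally valid.

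The obstacle you flag at the end is genuine, but the repair you envisage points in the wrong direction; notably, the paper's own proof elides exactly this step by asserting without justification that each $\Psi^{(l)}$ is a $*$-homomorphism. That assertion forces the open tubes $(K_i S_i)^{\circ}$, $i \in I^{(l)}$, to be pairwise disjoint: if two of them share a point $x$, then non-negative elements of the two ranges supported near the unit arrow at $x$ have nonzero convolution product. Such disjointness cannot be arranged your way: the tube for $i$ must contain $L \supp \vphi_i$ in its interior, yet no tube whose interior contains $\supp \vphi_i$ can fit inside $L \supp \vphi_i$, because the latter is fattened only in group directions and hence is not a neighbourhood of transverse boundary points of $\supp \vphi_i$ (for the translation flow on $\RR^2$ with $\supp \vphi_i = \{0\} \times [0,1]$ and $L = (-1,1)$ one gets $L\supp\vphi_i = (-1,1)\times[0,1]$, which is not a neighbourhood of $(0,0)$); moreover, in the nerve construction behind \Cref{prop:characterisation-tube-dimension} two same-dimensional simplices may share a vertex and thus be assigned the very same tube. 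The correct fix is to shrink the target algebras, not the tubes: replace $\Cstarred(\pairtubeopeni)$ by the hereditary subalgebra $H_i = \ol{\vphi_i \Cstarred(\pairtubeopeni) \vphi_i}$, which still contains the image of $\Phi_i$. Since the inclusions of \Cref{cor:conditional-expectation} intertwine multiplication by functions on the unit spaces, one has $\Psi_i(H_i) \subseteq \ol{\vphi_i \Cstarred(\cG) \vphi_i}$, and for distinct $i,j$ in the same class $I^{(l)}$ these ranges annihilate one another simply because $\vphi_i \vphi_j = 0$: the disjointness you actually possess, that of the supports, is all that is needed. Then $\Psi^{(l)} = \sum_{i \in I^{(l)}} \Psi_i|_{H_i}$ is an honest $*$-homomorphism, hence an order zero contraction with $d^{(l)} = 0$, and since nuclear dimension is monotone under passing to hereditary subalgebras \cite{winterzacharias2010}, we keep $\dimnuc(H_i) \leq \dimnuc\bigl(\cK \otimes \conto(\boxint{S_i})\bigr) \leq \dim X$, so \Cref{lem:nuclear_lemma} still yields the bound $(\dim X + 1)(\tubedim(G \grpaction{} X) + 1) - 1$. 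With this modification your argument (and the paper's) is complete.
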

\begin{proof}
  Set $\mathcal{G} = G \ltimes X$. Let $B_r \subseteq \contc(G,\contc(X))$ be the $r$-Ball with respect to the $I$-norm. Then $\bigcup_{r \in \NN} \ol{B_r}^{\| \cdot \|} = \conto(X) \rtimes G$, so that a Baire category argument shows that there is some $R > 0$ such that the closure of $B_R$ contains the unit ball of $\conto(X) \rtimes G$. Let $F$ be a finite subset of $B_R$ and let $\veps > 0$. Let $L \subseteq G$ and $A \subseteq X$ be compact subsets such that $F \subseteq \contc(L, \contc(A))$.  Write $d = \tubedim(G \grpaction{} X)$ and pick $\delta < ( \frac{\veps}{2(d+1)R} )^{1/2}$.  Then \cref{prop:characterisation-tube-dimension} provides a finite family of functions $(\vphi_i)_{i \in I}$ such that
  \begin{itemize}
  \item $(\vphi_i^2)_{i \in I}$ is a partition of unity for $A \subseteq X$
  \item each function $\vphi_i$ is $(L, \delta)$-F{\o}lner
  \item $L \supp \vphi_i$ is contained in a the interior of a tube $K_i S_i$, and
  \item there is a partition $I = I^{(0)} \sqcup \hdots \sqcup I^{(d)}$ such that for all $l \in \{0, \dotsc, d\}$ and all $i,j \in I^{(l)}$ we have
    \begin{gather*}
      L\supp \varphi_i \cap L\supp \varphi_j = \emptyset
      \text{.}
    \end{gather*}
  \end{itemize}
  For each $i \in I$ denote by $\pairtubeopeni$ the product groupoid $\pair(K_i^{\circ}) \times \boxint{S_i}$ as in \cref{subsec:restriction-tube}. Let $\Psi_i \colon \Cstar(\pairtubeopeni) \to \Cstar(\cG)$ be the inclusion as in \cref{cor:conditional-expectation} and let $\rE_i \colon \Cstar(\cG|_{(K_iS_i)^{\circ}}) \to \Cstar(\pairtubeopeni)$ be the conditional expectation from \cref{cor:conditional-expectation}.  We observe that compression with $\varphi_i$ is a completely positive map on $\Cstar(\cG)$, which is contractive since $\| \varphi_i \|_\infty \leq 1$ and has its image in $\Cstar(\cG|_{(K_i S_i)^{\circ}})$ since $\supp(\varphi_i) \subseteq (K_iS_i)^{\circ}$.  Thus, we obtain a completely positive contractive map $\Phi_i \colon \Cstar(\cG) \to \Cstar(\pairtubeopeni)$ by putting
  \begin{gather*}
    \Phi_i(a)
    =
    \rE_i(\vphi_i a \vphi_i)
    \eqcomma \quad \text{for all } a \in \Cstar(\cG)
    \eqstop
  \end{gather*}

  For $l \in \{0, \dotsc, d\}$ put $A^{(l)} = \bigoplus_{i \in I^{(l)}} \Cstar(\pairtubeopeni)$ and $A = \bigoplus_{l=0}^d A^{(l)}$. Let $\Phi^{(l)} = \bigoplus_{i \in I^{(l)}} \Phi_i\colon \Cstar(\cG) \to A^{(l)}$ and let $\Psi^{(l)} = \bigoplus_{i \in I^{(l)}} \Psi_i\colon A^{(l)} \to \Cstar(\cG)$. Then each $\Phi^{(l)}$ is completely positive and contractive, so $\Phi = \bigoplus_{l \in \{0, \dotsc, d\}} \Phi^{(l)} \colon \Cstar(\cG) \to A$ is also completely positive and contractive.  Moreover, each $\Psi^{(l)}$ is a *-homomorphism, in particular an order zero contraction, so the number $d^{(l)}$ from \Cref{lem:nuclear_lemma} is zero. It follows that $\Psi = \sum_{l \in \{0, \dotsc, d\}} \Psi^{(l)} \colon A \to \Cstar(\cG)$ is completely positive.

  Since $L \supp \vphi_i$ is contained in the interior of $K_i S_i$ and $F \subseteq \contc(L, \contc(A))$ we find that $\vphi_i a \vphi_i \in \Cstar(\pair(K_i^\circ) \times S_i^\circ)$ when $a \in F$.  Indeed,
  \begin{gather*}
    0 \neq \vphi_i a \vphi_i(g,x)
    = \vphi_i(gx)\vphi_i(x) a(g,x)
  \end{gather*}
  implies that $g \in L$ and $gx, x \in \supp \vphi_i$, which in combination with \cref{lem:concentration-in-slices} shows that there is $s \in S_i$ and $k_1, k_2 \in K_i$ such that such that $x = k_1 s$ and $gx = k_2s$.  By freeness of the $G$-action, this implies $g = k_2k_1^{-1}$.  It follows by \cref{prop:pair-groupoid} that $(g, x) = (k_2k_1^{-1}, k_1 s)$ lies in $\tilde{\mathcal{H}}_i = \pair(K_i^\circ) \times \boxint{S_i}$ when identified with its image in $\mathcal{G}$ under $\tau$.  Combined with \cref{lem:folner-quasi-central-for-I-norm} this shows that for all $a \in F$ we have
  \begin{align*}
    \|\Psi \circ \Phi(a) - a\|
    & =
      \| \sum_{i \in I} \vphi_i a \vphi_i - a\| \\
    & \leq
      \| \sum_{i \in I} \vphi_i a \vphi_i - a\|_I \\
    & \leq
      2(d+1)\delta^2 R
    < \veps
      \eqstop
  \end{align*}

  To finish the proof, we observe that by \Cref{ex:pair_groupoid} and \Cref{ex:top_space_groupoid} the $\Cstar$-algebra $\Cstar(\pairtubeopeni)$ is isomorphic to $\Cstar(\pair(K_i^{\circ}) \times \boxint{S_i}) \cong {\cK \otimes \conto(\boxint{S_i})}$ where $\mathcal{K}$ denotes the compact operators on a separable, infinite-dimensional Hilbert space. This has nuclear dimension at most $\dim (\boxint{S_i}) \leq \dim X$, so that \cref{lem:nuclear_lemma} gives
  \begin{gather*}
    \dimnuc(\conto(X) \rtimes G)
    \leq 
    \sum_{l = 0}^d (\dim(X) + 1)(0 + 1) - 1
    =
    (d + 1) (\dim(X) + 1) - 1
    \eqstop
  \end{gather*}
\end{proof}
The previous theorem was formulated in a general form, and we next specialise to concrete nuclear dimension estimates in the context of actions of connected Lie groups of polynomial growth, thanks to our results on the tube dimension established in \cref{sec:box-dimension}.  Recall from \cref{sec:polynomial-growth} Breuillard's numbers $\rmd(G)$ describing the growth asymptotics of balls in a group of polynomial growth such as a nilpotent Lie group.
\begin{corollary}
  \label{cor:finite-nuclear-dimension}
Let $G \grpaction{} X$ be a free action of a connected Lie group of polynomial growth on a second-countable, locally compact Hausdorff space. Then
\begin{gather*}
  \dimnuc(\conto(X) \rtimes G) + 1 \leq
  (\dim X + 1)^2 11^{\rmd(G)}
\end{gather*}
\end{corollary}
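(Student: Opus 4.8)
The plan is to derive the bound by composing three results already established in the paper, without introducing any new construction: the existence of slices from \Cref{cor:K-slices-exist-polynomial-growth}, the tube dimension estimate from \Cref{thm:covering}, and the general nuclear dimension bound from \Cref{thm:finite-nuclear-dimension}. All the genuine content lives in those statements, so the proof amounts to checking that their hypotheses are satisfied in the present setting and then performing a short arithmetic simplification.

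First I would note that a connected Lie group of polynomial growth is amenable, since any locally compact group of subexponential growth is amenable. Combined with the hypotheses that $G$ is connected and that $X$ is second-countable, locally compact and Hausdorff, this lets me apply \Cref{thm:finite-nuclear-dimension} to obtain
\begin{gather*}
  \dimnuc(\conto(X) \rtimes G) \leq (\dim X + 1)(\tubedim(G \grpaction{} X) + 1) - 1
  \eqstop
\end{gather*}

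Next I would control the tube dimension. Since the action is free and $G$ is a connected Lie group of polynomial growth acting on a second-countable space, \Cref{cor:K-slices-exist-polynomial-growth} shows that $G \grpaction{} X$ admits slices. This is exactly the hypothesis needed to invoke \Cref{thm:covering}, which gives
\begin{gather*}
  \tubedim(G \grpaction{} X) + 1 \leq 11^{\rmd(G)} (\dim X + 1)
  \eqstop
\end{gather*}
Substituting this into the previous inequality and adding one to both sides then yields
\begin{gather*}
  \dimnuc(\conto(X) \rtimes G) + 1 \leq (\dim X + 1) \cdot 11^{\rmd(G)}(\dim X + 1) = (\dim X + 1)^2 \, 11^{\rmd(G)}
  \eqcomma
\end{gather*}
which is the claimed estimate.

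I do not expect a genuine obstacle here, since the hard analytic work has already been done: the equivariant maps into finite dimensional representations underlying slice existence, the maximal packing argument of \Cref{prop:cover_by_translates} that produces the multiplicity constant $11^{\rmd(G)}$, and the conditional expectation onto pair-groupoid algebras in \Cref{cor:conditional-expectation}. The only point meriting explicit mention is the amenability of $G$, which is what makes \Cref{thm:finite-nuclear-dimension} applicable (ensuring in particular that the full and reduced crossed products agree and that the F{\o}lner partitions of unity of \Cref{sec:partitions} are available).
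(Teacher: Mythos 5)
Your proposal is correct and follows exactly the paper's own route: slices exist by \cref{cor:K-slices-exist-polynomial-growth}, hence \cref{thm:covering} bounds the tube dimension by $11^{\rmd(G)}(\dim X + 1) - 1$, and substituting into \cref{thm:finite-nuclear-dimension} gives the claim. Your explicit remark that polynomial growth implies amenability (needed to apply \cref{thm:finite-nuclear-dimension}) is a point the paper leaves implicit, and is a worthwhile addition.
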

\begin{proof}
  By \cref{thm:covering} combined with \cref{cor:K-slices-exist-polynomial-growth} the tube dimension of $G \grpaction{} X$ is bounded by $11^{\rmd(G)} \cdot (\dim X + 1) - 1$.  We hence obtain the estimate from \cref{thm:finite-nuclear-dimension}.
\end{proof}

\begin{remark}\label{rem:nuclear_dimension_bound_difference}
We remark on the difference between the nuclear dimension bound obtained in \Cref{cor:finite-nuclear-dimension} and \cite[Corollary 10.2]{hirshbergszabowinterwu2017} or \cite[Theorem 8.1]{hirshbergwu2021}. These differences are a result of the following two observations about \cite{kasprowskiruping17}:

Firstly, on p.\ 1214 in \cite{kasprowskiruping17} in the final line of the proof of Lemma 5.1, it is written that if $x \in \Phi_{[-4\alpha,4\alpha]}(gB_i)$ then there is $\beta \in \{ -4\alpha, -2\alpha, 0, 2\alpha, 4\alpha \}$ such that $\Phi_\beta(x) \in \Phi_{[-\alpha,\alpha]}(gB_i)$. While this is true, there is a more optimal choice here: $\beta$ can be chosen from the set $\{ -3\alpha,-\alpha,\alpha,3\alpha \}$ which has one less element. This affects the multiplicity in the statement of Kasprowski-R{\"u}ping's Lemma 5.1 (2) and consequently also Theorem 5.2, which is cited in \cite[Theorem 8.8]{hirshbergszabowinterwu2017} and \cite[Theorem 3.11]{hirshbergwu2021}.

Secondly, in the proof of \cite[Theorem 5.2]{kasprowskiruping17} it is written that by Lemma 5.1 (1) every element in $X_{>\gamma}$ is contained in an open set of the form $\Phi_{(-3\alpha,3\alpha)}(gB_i^k)$ for some $g \in G$, $i \in \NN$, $k=0,\ldots, \mathrm{ind} X$. Upon inspection of this claim it seems however that Lemma 5.1 (1) only guarantees that the sets $\Phi_{(-4,\alpha,4\alpha)}(gB_i^k)$ cover $X_{>\gamma}$. Thus, one will in the end need to control the multiplicity of the collection $\mathcal{B} = \{ \Phi_{(-5\alpha,5\alpha)}(B_i^k) \mid i \in \bN, k=0, 1 \ldots, \operatorname{ind} X \}$ rather than $\mathcal{B} = \{ \Phi_{(-4\alpha,4\alpha)}(B_i^k) \mid i \in \bN, k=0,1\ldots,\operatorname{ind}X \}$. This is exactly the situation we end up with in the present paper in the proof of \Cref{thm:covering} after Claim~\ref{claim:dimension_reduction}, and we account for this with the choice of $a=10$ in the very beginning of the proof of \Cref{thm:covering}.
\end{remark}


\section{Classifiable C*-algebras associated with FLC point sets}
\label{sec:classifiable-point-sets}

In this section we describe examples of classifiable \Cstar-algebras arising from Theorem~\ref{thm:finite-nuclear-dimension}.  In greatest generality, abstract conditions on a point set in a connected Lie group of polynomial growth, that ensure that its \Cstar-algebra introduced in \cite{enstadraum2022} is classifiable.  Every nilpotent Lie group is of polynomial growth. Subsequently specialising these results, our main source of concrete examples arise as approximate lattices constructed from arithmetic lattices in products of nilpotent Lie groups.

Let us start by introducing some notation.  We recall that $G$ denotes a locally compact second countable group and we denote by $\cC(G)$ the Chabauty space of all closed subsets of $G$ described whose topology has a subbasis
\begin{align*}
  \cO_K & = \{A \in \cC(X) \mid A \cap K = \emptyset\} \\
  \cO^U & = \{A \in \cC(X) \mid A \cap U \neq \emptyset\}
          \eqcomma
\end{align*}
where $K \subseteq X$ is compact and $U \subseteq X$ is open.  For a closed subset $\Lambda \subseteq G$, we denote its hull and its discrete hull, respectively, by 
\begin{gather*}
  \Omega(\Lambda)
  =
  \ol{ \{g\Lambda\mid g\in G\} }
  \subseteq
  \cC(G)
  \quad
  \text{and}
  \quad
  \Omega_0(\Lambda) = \{P \in \Omega \mid e \in P\}
  \eqstop
\end{gather*}

Using the discrete hull, a Delone set of finite local complexity (FLC) in $G$ can be defined as a closed subset $\Lambda \subseteq G$ such that
\begin{itemize}
\item for every compact subset $K$ there is a finite set $\cF \subseteq \cC(G)$ such that for every $P \in \Omega_0(\Lambda)$ there is $F \in \cF$ such that $P \cap K = F$, and
\item $\emptyset \notin \Omega(\Lambda)$.
\end{itemize}

In order to apply \cref{cor:finite-nuclear-dimension} to dynamical systems arising from point sets, we need the following result, which is well-known for point sets in $\bR^d$, see e.g.\ \cite[Corollary 2.1]{bellissardherrman00}.

\begin{proposition}
  \label{prop:FLC-implies-tdlc}
  Let $\Lambda \subseteq G$ be an FLC Delone set.  Then $\Omega_0(\Lambda)$ is totally disconnected.
\end{proposition}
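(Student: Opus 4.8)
The plan is to prove total disconnectedness by producing, for suitably chosen compact sets $K \subseteq G$, a finite partition of $\Omega_0(\Lambda)$ into clopen sets that separates any two distinct points; since a clopen set separating two points of a subspace splits every connected subset containing both, this forces the connected components to be singletons. The clopen sets will be the fibres of the \emph{pattern map} $r_K \colon \Omega_0(\Lambda) \to \cC(G)$, $Q \mapsto Q \cap K$, in the spirit of the acceptance domains used over $\bR^d$. First I would record two consequences of the hypotheses. As $\Lambda$ is Delone it is uniformly discrete, and uniform discreteness with a fixed constant is preserved under Chabauty limits, so every $Q \in \Omega(\Lambda)$ is uniformly discrete with the same constant; in particular $Q \cap K$ is finite for every compact $K$. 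Combined with the FLC hypothesis, the set of patterns $\mathcal F_K = \{Q \cap K : Q \in \Omega_0(\Lambda)\}$ is finite, and hence the total set of pattern points $\mathcal P_K = \bigcup_{Q \in \Omega_0(\Lambda)}(Q \cap K) \subseteq K$ is finite as well.

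The core step is to show that the fibres of $r_K$ are open whenever $K$ is \emph{good}, meaning $\mathcal P_K \subseteq \mathring K$. Fix such a $K$ and a realised pattern $F = \{x_1, \dots, x_k\} \in \mathcal F_K$. Using finiteness of $\mathcal P_K$ I would choose pairwise disjoint open sets $U_j \ni x_j$ with $\overline{U_j} \subseteq \mathring K$ and $U_j \cap \mathcal P_K = \{x_j\}$, and consider the Chabauty-open set
\[
  \cO(F,K) = \Big( \bigcap_{j=1}^{k} \cO^{U_j} \Big) \cap \cO_{K \setminus \bigcup_{j} U_j},
\]
where $K \setminus \bigcup_j U_j$ is compact. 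A short verification shows $\cO(F,K) \cap \Omega_0(\Lambda) = r_K^{-1}(F)$: for $Q$ in the left-hand side the condition $Q \cap U_j \neq \emptyset$ forces $x_j \in Q \cap K$, since any such point lies in $U_j \cap \mathcal P_K = \{x_j\}$, while missing $K \setminus \bigcup_j U_j$ forces $Q \cap K \subseteq \bigcup_j U_j$, whence $Q \cap K \subseteq \mathcal P_K \cap \bigcup_j U_j = F$; the reverse inclusion is immediate. Thus each fibre $r_K^{-1}(F)$ is open, and as the finitely many fibres partition $\Omega_0(\Lambda)$, each is the complement of the union of the others and therefore also closed, hence clopen.

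It remains to produce good $K$ cofinally and to separate points. I would fix a proper continuous function $f \colon G \to [0,\infty)$ and set $K_t = f^{-1}([0,t])$, so that $K_t$ is compact and $\partial K_t \subseteq f^{-1}(t)$. For each $t$ the set $\{p \in \bigcup_{Q} Q : f(p) \leq t\}$ equals $\mathcal P_{K_t}$ and is therefore finite, so the set of $f$-values of pattern points is locally finite in $[0,\infty)$; hence arbitrarily large $t$ avoid it, and for such $t$ one has $\mathcal P_{K_t} \cap \partial K_t = \emptyset$, i.e. $K_t$ is good. Given distinct $Q_0, Q_1 \in \Omega_0(\Lambda)$ there is a point $x$ in their symmetric difference; choosing a large good $t$ with $f(x) < t$ puts $x \in \mathring{K_t}$, so $Q_0 \cap K_t \neq Q_1 \cap K_t$ and the corresponding clopen fibre of $r_{K_t}$ contains exactly one of $Q_0, Q_1$, as required.

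I expect the main obstacle to be precisely this \emph{boundary effect}: the naive fibres $r_K^{-1}(F)$ need not be open, because a point of $Q$ lying on $\partial K$ can be pushed into or out of $K$ under an arbitrarily small Chabauty perturbation, changing the pattern. The whole point of the argument is to neutralise this by choosing $K$ whose boundary avoids the finite set $\mathcal P_K$ of all pattern points, a choice that is available cofinally exactly because FLC together with uniform discreteness makes the collection of pattern points locally finite.
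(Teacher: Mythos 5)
Your proof is correct, and at its core it runs on the same mechanism as the paper's: FLC makes the set of patterns $Q \cap C$ on a compact set finite, the fibres of the pattern map are exhibited as Chabauty-open sets of the form $\bigcap_j \cO^{U_j} \cap \cO_{C \setminus \bigcup_j U_j}$, and a finite partition into open sets automatically consists of clopen sets. There are, however, two genuine differences in implementation. First, the topological criterion: you separate arbitrary pairs of points by clopen sets, so quasi-components and hence components are singletons; the paper instead reduces to producing a non-empty clopen subset of every non-empty open $U \subseteq \Omega_0(\Lambda)$. Your criterion is airtight and needs no compactness, whereas the paper's stated reduction is, taken literally, weaker than total disconnectedness (a compact metric space can have a clopen $\pi$-base and still contain an arc --- e.g.\ an arc together with isolated points accumulating on all of it); the paper is saved because its construction actually yields a clopen neighbourhood $U_{Q \cap C} \subseteq U$ of each point $Q$ of a basic open subset of $U$, i.e.\ a clopen basis. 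Second, the boundary effect: since the paper cannot choose its compact set $C$ (it is dictated by $U$), it fattens the finitely many pattern points into translates $gV$ of a small identity neighbourhood, while you keep the test sets sharp and instead move the compact set, choosing $K_t = f^{-1}([0,t])$ whose boundary avoids the locally finite set of pattern points. Here your variant is the more careful one: the paper's claim that a suitable $V$ exists merely ``since $\cF$ consists of finitely many finite subsets of $G$'' does not by itself give the backward implication, because for a pattern point $g \in F \cap \partial C$ the set $gV$ necessarily leaves $C$, and excluding points of some $Q \in \Omega_0(\Lambda)$ lying in $gV \setminus C$ requires invoking FLC for an enlarged compact set such as $C\ol{V}$ --- exactly the phenomenon you isolate and neutralise by choosing good $K$. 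Two minor points: the existence of your proper continuous $f \colon G \to [0,\infty)$ uses that $G$ is second countable, hence $\sigma$-compact (the paper's standing assumption in this section), and preservation of uniform discreteness under Chabauty limits may require slightly shrinking the constant, which is harmless for the finiteness of $\cP_K$.
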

\begin{proof}
  Let $U$ be a non-empty open subset of $\Omega_0(\Lambda)$.  We have to find a non-empty compact open subset of $U$.  There are relatively compact open subsets $V_1, \dotsc, V_n \subseteq G$ and a compact subset $K \subseteq G$ such that $\emptyset \neq \cO_K\cap\cO^{V_1}\cap \dotsc \cap\cO^{V_n} \subseteq U$.  Let $C = K \cup \ol{V_1} \cup \dotsm \cup \ol{V_n}$.  Since $\Lambda$ is FLC, the set $\cF=\{Q \cap C \mid Q \in \Omega_0(\Lambda)\}$ is finite.  Since $\cF$ consists of finitely many finite subsets of $G$, there is an open identity neighbourhood $V \subseteq G$ satisfying the following condition: for all $F \in \cF$, we have $Q \cap C = F$ if and only if $Q \cap gV \neq \emptyset$ for all $g \in F$ and $Q \cap (C \setminus \bigcup_{g \in F} gV) = \emptyset$.  Writing
  \begin{gather*}
    U_F = \cO_{C \setminus \bigcup_{g \in F} gV} \cap \bigcap_{g \in F} \cO^{gV}
  \end{gather*}
  this can be restated as $Q \cap C = F$ if and only if $Q \in U_F$.  So the open sets $U_F$ for $F \in \cF$ are pairwise disjoint and cover $\Omega_0(\Lambda)$.  They are hence clopen.  Since $U_F \subseteq \cO_K\cap \cO^{V_1} \cap \dotsc \cap \cO^{V_n} \subseteq U$ for some $F \in \cF$, and since $U_F \neq \emptyset$ for all $F \in \cF$ this finishes the proof.  
\end{proof}
Since total disconnectedness is equivalent to zero-dimensionality, we can now derive dimension bounds for the hull of an FLC point set.
\begin{corollary}
  \label{cor:dimension-punctured-hull}
  Let $\Lambda \subseteq G$ be an FLC Delone set in a Lie group. Then $\dim(\Omega(\Lambda)) \leq \dim G$.
\end{corollary}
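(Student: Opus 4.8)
The plan is to exhibit $\Omega(\Lambda)$ as covered by open tubes for the translation action $G \curvearrowright \Omega(\Lambda)$, where each tube is homeomorphic to the product of a piece of $G$ with the zero-dimensional transversal $\Omega_0(\Lambda)$, and then to read off the bound from the product and cover formulae of \Cref{prop:dim_properties}. By \Cref{prop:FLC-implies-tdlc} the transversal $\Omega_0(\Lambda)$ is totally disconnected; as a closed subspace of the compact metrisable Chabauty space $\cC(G)$, total disconnectedness coincides with zero-dimensionality, so $\dim \Omega_0(\Lambda) = 0$.

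First I would check that $\Omega_0(\Lambda)$ is a $K$-slice for a suitable compact identity neighbourhood $K$. Finite local complexity yields uniform discreteness of the patterns: inspecting the finitely many patterns meeting a fixed compact neighbourhood of $e$, one finds an open identity neighbourhood $W$ with $P \cap W = \{e\}$ for every $P \in \Omega_0(\Lambda)$. Choosing $K$ with $K^{-1}K \subseteq W$ and using \Cref{rem:slice-disjoint-translates}, suppose $g \Omega_0(\Lambda) \cap \Omega_0(\Lambda) \neq \emptyset$ for some $g \in K^{-1}K \setminus \{e\}$; then some $P \in \Omega_0(\Lambda)$ satisfies $gP \in \Omega_0(\Lambda)$, forcing $g^{-1} \in P$, whence $g^{-1} \in K^{-1}K \subseteq W$ gives $g^{-1} \in P \cap W = \{e\}$, a contradiction. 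Thus $\Omega_0(\Lambda)$ is a $K$-slice and, by \Cref{rem:slice-homeomorphic}, the tube $K\Omega_0(\Lambda)$ is homeomorphic to $K \times \Omega_0(\Lambda)$. Moreover, since $\emptyset \notin \Omega(\Lambda)$, each $P \in \Omega(\Lambda)$ is nonempty, so picking $g \in P$ gives $g^{-1}P \in \Omega_0(\Lambda)$ and $P = g(g^{-1}P)$; hence $\Omega(\Lambda) = G \cdot \Omega_0(\Lambda)$.

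The key remaining step, which I expect to be the main obstacle, is to show that the entire transversal already lies in the open tube, so that the translated open tubes cover. For this I would consider the open set $\mathcal{U} = \cO^{K^\circ} \cap \Omega(\Lambda) = \{ P \in \Omega(\Lambda) \mid P \cap K^\circ \neq \emptyset \}$. It contains $\Omega_0(\Lambda)$ because $e \in K^\circ$, and for any $P \in \mathcal{U}$, choosing $g \in P \cap K^\circ$ gives $P = g(g^{-1}P) \in K^\circ \Omega_0(\Lambda) \subseteq K\Omega_0(\Lambda)$. Being an open subset of $K\Omega_0(\Lambda)$, the set $\mathcal{U}$ lies in $(K\Omega_0(\Lambda))^\circ$, so $\Omega_0(\Lambda) \subseteq (K\Omega_0(\Lambda))^\circ$. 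Consequently, writing any $P = g P_0$ with $P_0 \in \Omega_0(\Lambda)$ gives $P \in g(K\Omega_0(\Lambda))^\circ$, and therefore $\{\, g(K\Omega_0(\Lambda))^\circ \mid g \in G \,\}$ is an open cover of $\Omega(\Lambda)$.

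Finally, each open tube satisfies $\dim\bigl(g(K\Omega_0(\Lambda))^\circ\bigr) \le \dim(K\Omega_0(\Lambda)) = \dim(K \times \Omega_0(\Lambda)) \le \dim K + \dim \Omega_0(\Lambda) = \dim G$, combining homeomorphism invariance, monotonicity \Cref{prop:dim_properties}\ref{it:dim:monotone}, the product inequality \Cref{prop:dim_properties}\ref{it:dim:products}, the equality $\dim K = \dim G$ (a compact identity neighbourhood in the manifold $G$ contains a nonempty open subset of $G$), and $\dim \Omega_0(\Lambda) = 0$. Applying the cover formula \Cref{prop:dim_properties}\ref{it:dim:covers} to the open cover above then yields $\dim \Omega(\Lambda) = \sup_{g \in G} \dim\bigl(g(K\Omega_0(\Lambda))^\circ\bigr) \le \dim G$, as claimed.
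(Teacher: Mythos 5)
Your proof is correct and follows essentially the same route as the paper: both arguments use finite local complexity to obtain uniform discreteness, exhibit a neighbourhood of the transversal homeomorphic to the product of an identity neighbourhood in $G$ with the zero-dimensional space $\Omega_0(\Lambda)$, cover $\Omega(\Lambda)$ by $G$-translates of this neighbourhood, and conclude with the product and cover formulae of \Cref{prop:dim_properties}. The only difference is presentational: you derive the local product structure from the slice/tube formalism of \Cref{sec:slices} (your open set $\mathcal{U} = \cO^{K^\circ} \cap \Omega(\Lambda)$ is precisely the paper's set $V$, and your tube homeomorphism plays the role of the paper's map $\vphi$), whereas the paper constructs the homeomorphism $V \cong U \times \Omega_0(\Lambda)$ by hand from the Fell topology.
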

\begin{proof}
  Since $\Lambda$ is FLC, it is also uniformly discrete, that is there exists an open identity neighbourhood $U \subseteq G$ such that $|U \cap P| \leq 1$ for all $P \in \Omega(\Lambda)$.  

  Put $V = \{P \in \Omega(\Lambda) \mid P \cap U \neq \emptyset\}$, which is an open subset of $\Omega(\Lambda)$ containing $\Omega_0(\Lambda)$.  The map $\vphi_0 \colon V \to U$ satisfying $\{\vphi_0(P)\} = P \cap U$ for all $P \in V$ is continuous by definition of the Fell topology.  So the map $\vphi \colon V \to U \times \Omega_0(\Lambda)$ defined by $\vphi(P) = (\vphi_0(P), \vphi_0(P)^{-1}P)$ is also continuous.  A continuous inverse for $\vphi$ is obtained by restricting the action map for $G \grpaction{} \Omega(\Lambda)$ to obtain a map $\psi \colon U \times \Omega_0(\Lambda) \to V$ satisfying $\psi(g, P) = gP$.  So we infer that $V \cong U \times \Omega_0(\Lambda)$.
  
  In order to conclude the proof, we calculate
  \begin{gather*}
    \dim (\Omega(\Lambda)) = \dim(V) = \dim(U \times \Omega_0(\Lambda)) = \dim(U) + \dim(\Omega_0(\Lambda)) =  \dim(G)
    \eqcomma
  \end{gather*}
  where we used the equalities $\dim (\Omega(\Lambda)) = \sup_{g \in G} \dim (gV) = \dim V$ as well as $\dim U = \sup_{g \in G} \dim gU = \dim G$ which are justified by \cref{prop:dim_properties} together with the fact that $\dim(\Omega_0(\Lambda)) = 0$, which follows from \cref{prop:FLC-implies-tdlc}.
\end{proof}
In order to formulate the first main result of this section, we recall some terminology.  A point set $\Lambda$ is repetitive if for every compact set $K \subseteq G$ there exists a compact set $L \subseteq G$ such that the following statement holds: for all $h \in G$ and all $P \in \Omega(\Lambda)$ there is $g \in G$ such that $g(K \cap P) \subseteq hL \cap \Lambda$.  It follows from \cite[Propositions 2.2 and 2.4]{beckushartnickpogorzelski2020} that an FLC Delone set $\Lambda$ is repetitive if and only if the action $G \grpaction{} \Omega(\Lambda)$ is minimal.  We further recall that, $\Lambda$ is called aperiodic if the action $G \grpaction{} \Omega(\Lambda)$ is free.
In \cite{enstadraum2022} groupoids associated to point sets were introduced.  In particular, if $\Lambda \subseteq G$ is an FLC Delone set, then 
\begin{gather*}
  \cG(\Lambda) = G \ltimes \Omega(\Lambda)|_{\Omega_0(\Lambda)}
  \eqstop
\end{gather*}
is an {\'e}tale groupoid.  The reduced and universal \Cstar-algebra of the point set $\Lambda$ are then by definition
\begin{gather*}
  \Cstarred(\Lambda) = \Cstarred(\cG(\Lambda))
  \quad
  \text{and}
  \quad
  \Cstar(\Lambda) = \Cstar(\cG(\Lambda))
  \eqstop
\end{gather*}
If $G$ is amenable, e.g. a nilpotent Lie group, these \Cstar-algebras are isomorphic.  We will need the following result to analyse $\Cstar(\Lambda)$.
\begin{lemma}
  \label{lem:stable-isomorphism}
  The \Cstar-algebras $C_0(\Omega(\Lambda)) \rtimes_{\mathrm{red}} G$ and $\Cstarred(\Lambda)$ are stably isomorphic.
\end{lemma}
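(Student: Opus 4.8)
The plan is to realise $\Cstarred(\Lambda) = \Cstarred(\cG(\Lambda))$ as the reduced C*-algebra of the restriction of $\cG = G \ltimes \Omega(\Lambda)$ to a transversal, exhibit $\Omega_0(\Lambda)$ as such a transversal, and then invoke the Muhly--Renault--Williams groupoid equivalence theorem to obtain a Morita equivalence that I finally upgrade to stable isomorphism. Recall from \Cref{ex:transformation_groupoid} that $\Cstarred(\cG) \cong \conto(\Omega(\Lambda)) \rtimes_{\mathrm{red}} G$, while by definition $\Cstarred(\Lambda) = \Cstarred(\cG|_{\Omega_0(\Lambda)})$. I would first check that $\Omega_0(\Lambda)$ is a transversal in the required sense: it is closed in $\Omega(\Lambda)$ (the collection of closed subsets containing $e$ is closed in the Fell topology), it meets every $G$-orbit, and the range map restricted to $\rms^{-1}(\Omega_0(\Lambda))$ is open.

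The orbit-meeting property is immediate: since $\emptyset \notin \Omega(\Lambda)$ by the FLC Delone hypothesis, any $P \in \Omega(\Lambda)$ contains some point $p$, and then $p^{-1}P \in \Omega_0(\Lambda)$ lies in the $G$-orbit of $P$. For openness of $\rmr$ on $\rms^{-1}(\Omega_0(\Lambda)) \cong G \times \Omega_0(\Lambda)$, I would reuse the local product structure established in the proof of \Cref{cor:dimension-punctured-hull}: there is an identity neighbourhood $U \subseteq G$ with $|U \cap P| \le 1$ for all $P$, and the map $\psi\colon U \times \Omega_0(\Lambda) \to V$, $(u,P) \mapsto uP$, is a homeomorphism onto the open set $V = \{Q \mid Q \cap U \neq \emptyset\}$. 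Since the global action map $G \times \Omega(\Lambda) \to \Omega(\Lambda)$ is open (a basic open $W \times O$ has image $\bigcup_{w \in W} wO$, a union of open translates) and left translations are homeomorphisms, covering $G$ by translates of $U$ and transporting through $\psi$ shows that $(g,P) \mapsto gP$ is open on $G \times \Omega_0(\Lambda)$.

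With these properties verified, $\rms^{-1}(\Omega_0(\Lambda))$ carries commuting free and proper left $\cG$- and right $\cG|_{\Omega_0(\Lambda)}$-actions, implementing a groupoid equivalence in the sense of Muhly--Renault--Williams; consequently the full groupoid C*-algebras $\Cstar(\cG)$ and $\Cstar(\cG|_{\Omega_0(\Lambda)})$ are Morita equivalent. Because all groupoids occurring here are amenable, as recalled in \cref{sec:groupoids}, full and reduced C*-algebras coincide, so $\conto(\Omega(\Lambda)) \rtimes_{\mathrm{red}} G$ and $\Cstarred(\Lambda)$ are Morita equivalent. Finally, $G$ is second countable and $\Omega(\Lambda)$ is compact metrisable, so both C*-algebras are separable, and the Brown--Green--Rieffel theorem upgrades Morita equivalence to stable isomorphism.

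The main obstacle is the verification of the openness of the range map on the transversal, that is, the technical hypotheses of the equivalence theorem; everything else is the assembly of standard facts (the equivalence theorem, amenability giving reduced $=$ full, and Brown--Green--Rieffel). Since this transversal analysis for point sets in locally compact groups is already developed in \cite{enstadraum2022}, in practice I would cite that work for the underlying groupoid equivalence and then only contribute the separability observation and the passage from Morita equivalence to stable isomorphism.
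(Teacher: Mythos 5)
Your proposal is correct and takes essentially the same route as the paper: the paper's proof likewise exhibits $\Omega_0(\Lambda)$ as an abstract transversal, citing \cite[Proposition 3.8]{enstadraum2022} together with \cite[Example 2.40]{williams2019-toolkit} for the groupoid equivalence between $G \ltimes \Omega(\Lambda)$ and $\cG(\Lambda)$, then applies the equivalence theorem \cite[Theorem 4.9]{williams2019-toolkit} to get Morita equivalence and uses separability (second countability of $\cG(\Lambda)$) to upgrade to stable isomorphism. Your hands-on verification of the transversal conditions (closedness, meeting every orbit, openness of the range map via the local product structure from \cref{cor:dimension-punctured-hull}) simply fills in what the paper delegates to the cited references.
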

\begin{proof}
  It follows from \cite[Proposition 3.8]{enstadraum2022} and \cite[Example 2.40]{williams2019-toolkit} that the groupoids $G \ltimes \Omega(\Lambda)$ and $\cG(\Lambda)$ are equivalent.  So \cite[Theorem 4.9]{williams2019-toolkit} says that their \Cstar-algebras are Morita equivalent.  Since both are separable, as $\cG(\Lambda)$ is second countable, it follows that $\Cstar(\Lambda) = \Cstarred(\cG(\Lambda))$ is stably isomorphic with $\cont(\Omega(\Lambda)) \rtimes_{\mathrm{red}} G$. 
\end{proof}

\begin{theorem}
  \label{thm:point-set-nuclear-dimension}
  Let $\Lambda$ be an aperiodic, FLC Delone set in a connected Lie group of polynomial growth. Then $\Cstar(\Lambda)$ has finite nuclear dimension.  If additionally, $\Lambda$ is repetitive then the \Cstar-algebra $\Cstar(\Lambda)$ is classifiable.
\end{theorem}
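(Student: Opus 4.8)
The plan is to obtain both assertions from \Cref{cor:finite-nuclear-dimension}, applied to the hull dynamical system $G \grpaction{} \Omega(\Lambda)$, combined with the stable isomorphism of \Cref{lem:stable-isomorphism}. Since a group of polynomial growth is amenable, reduced and full constructions agree throughout; in particular $\Cstar(\Lambda) = \Cstarred(\Lambda)$ and $\conto(\Omega(\Lambda)) \rtimes G = \conto(\Omega(\Lambda)) \rtimes_{\mathrm{red}} G$.

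For finite nuclear dimension, I would first record that $\Omega(\Lambda)$ is a closed subset of the compact, second countable, metrisable space $\cC(G)$, hence itself compact and second countable, and that the action $G \grpaction{} \Omega(\Lambda)$ is free by aperiodicity. Thus \Cref{cor:finite-nuclear-dimension} yields $\dimnuc(\conto(\Omega(\Lambda)) \rtimes G) + 1 \leq (\dim \Omega(\Lambda) + 1)^2 \, 11^{\rmd(G)}$, which is finite since $\dim \Omega(\Lambda) \leq \dim G$ by \Cref{cor:dimension-punctured-hull}. As nuclear dimension is invariant under stabilisation \cite{winterzacharias2010}, hence under stable isomorphism, \Cref{lem:stable-isomorphism} gives $\dimnuc(\Cstar(\Lambda)) = \dimnuc(\conto(\Omega(\Lambda)) \rtimes G) < \infty$. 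This settles the first statement.

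Assuming now that $\Lambda$ is repetitive, I would verify the remaining properties in the definition of classifiability for $\Cstar(\Lambda) = \Cstar(\cG(\Lambda))$. Separability is immediate from second countability of $\cG(\Lambda)$. Unitality holds because $\Omega_0(\Lambda) = \{ P \in \Omega(\Lambda) \mid e \in P \}$ is closed in the compact hull $\Omega(\Lambda)$, so the étale groupoid $\cG(\Lambda)$ has compact unit space and $\mathbb{1}_{\Omega_0(\Lambda)}$ is a unit. For the UCT, note that $G \ltimes \Omega(\Lambda)$ is amenable by \Cref{ex:transformation_groupoid}, hence so is its restriction $\cG(\Lambda)$ by \cite{anantharamandelarocherenault2000}; C*-algebras of amenable second countable étale groupoids satisfy the UCT, which may also be transported from $\conto(\Omega(\Lambda)) \rtimes G$ through the Morita equivalence of \Cref{lem:stable-isomorphism}.

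The crux is simplicity, together with ruling out elementarity. The equivalence of groupoids $\cG(\Lambda) \sim G \ltimes \Omega(\Lambda)$ underlying \Cref{lem:stable-isomorphism} (from \cite{enstadraum2022, williams2019-toolkit}) induces a bijection between the open invariant subsets of the two unit spaces; since repetitivity makes the flow $G \grpaction{} \Omega(\Lambda)$ minimal by \cite{beckushartnickpogorzelski2020}, the groupoid $\cG(\Lambda)$ is minimal. Aperiodicity makes $\cG(\Lambda)$ principal, hence effective, so the standard simplicity criterion for minimal, effective, Hausdorff, second countable étale groupoids shows that $\Cstarred(\cG(\Lambda))$ is simple; amenability then identifies this with $\Cstar(\Lambda)$. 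Finally $\Cstar(\Lambda)$ is non-elementary: as $G$ is a nontrivial connected, hence non-discrete, group, the crossed product $\conto(\Omega(\Lambda)) \rtimes G$ is non-unital and therefore infinite-dimensional, a property inherited by the Morita equivalent $\Cstar(\Lambda)$; being unital, simple and infinite-dimensional, it cannot be elementary. Collecting separability, unitality, the UCT, simplicity, non-elementarity and finite nuclear dimension proves classifiability. I expect the main difficulty to lie in the simplicity step, namely transporting minimality from the continuous flow to the étale transversal groupoid and invoking the correct groupoid simplicity criterion, while effectiveness is immediate from freeness.
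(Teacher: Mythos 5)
Your overall route coincides with the paper's: finite nuclear dimension of $\conto(\Omega(\Lambda)) \rtimes G$ via \Cref{cor:finite-nuclear-dimension} and \Cref{cor:dimension-punctured-hull}, transported to $\Cstar(\Lambda)$ by \Cref{lem:stable-isomorphism}; unitality from the {\'e}tale structure and compactness of $\Omega_0(\Lambda)$; the UCT from amenability via Tu \cite{tu99}; and simplicity by transporting minimality (repetitivity, via \cite{beckushartnickpogorzelski2020}) through the groupoid equivalence and combining it with principality and the criterion of \cite{brownclarkfarthingsims2014}. All of these steps are correct and essentially identical to the paper's proof.

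The genuine gap is in your non-elementarity argument. You deduce that $\conto(\Omega(\Lambda)) \rtimes G$ is infinite-dimensional (from non-unitality) and then assert that this property is \emph{inherited by the Morita equivalent} $\Cstar(\Lambda)$. Infinite-dimensionality is not invariant under Morita equivalence or stable isomorphism: $\cK(\Ltwo(\RR)) \cong \conto(\RR) \rtimes \RR$ is non-unital and infinite-dimensional, yet Morita equivalent to $\CC$. Applied verbatim to that example, your argument would prove that $\CC$ is infinite-dimensional and non-elementary. The failure is not cosmetic here: a unital algebra stably isomorphic to a non-unital infinite-dimensional one can perfectly well be a matrix algebra, which is exactly the elementary case you must exclude. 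One could try to repair your route by observing that elementarity itself \emph{is} a stable-isomorphism invariant among separable \Cstar-algebras (the separable elementary algebras are precisely those stably isomorphic to the compacts), but then you would have to rule out $\conto(\Omega(\Lambda)) \rtimes G \cong \cK(H)$, and non-unitality alone cannot do that. The paper avoids the issue by arguing directly on the transversal groupoid: $\cG(\Lambda)$ is an infinite, principal, {\'e}tale groupoid, so its unit space $\Omega_0(\Lambda)$ is infinite and $\cont(\Omega_0(\Lambda))$ sits inside $\Cstar(\Lambda)$ as an infinite-dimensional commutative subalgebra; hence $\Cstar(\Lambda)$ is not a matrix algebra, and since a unital elementary \Cstar-algebra is a matrix algebra, non-elementarity follows. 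You should replace the Morita-inheritance step by such a direct argument on $\cG(\Lambda)$.
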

\begin{proof}
  First assume that $\Lambda$ is an aperiodic, FLC Delone set in a connected, simply connected, nilpotent Lie group $G$.  By \Cref{lem:stable-isomorphism}, the \Cstar-algebras $\Cstar(\Lambda)$ and $\Cstarred(\cG(\Lambda))$ are stably isomorphic.  It hence suffices to show that $\cont(\Omega(\Lambda)) \rtimes G$ has finite nuclear dimension.  This follows from \Cref{thm:finite-nuclear-dimension}, making use of the assumptions of aperiodicity, which ensures that $G \grpaction{} \Omega(\Lambda)$ is free.

  Let us now additionally assume that $\Lambda$ is repetitive.  We already know that $\Cstar(\Lambda)$ is separable and has finite nuclear dimension and we will show that $\Cstar(\Lambda)$ is simple, unital, non-elementary and in the UCT class.  It is unital, since $\cG(\Lambda)$ is {\'e}tale and $\cG(\Lambda)\nought = \Omega_0(\Lambda)$ is compact.  Further, since $\cG(\Lambda)$ is infinite, it follows that $\Cstar(\Lambda)$ is not isomorphic to a matrix algebra, and hence not a (unital) elementary \Cstar-algebra. As $G$ is an amenable group, also $G \ltimes \Omega(\Lambda)$ and hence $\cG(\Lambda)$ are amenable groupoids.  By the work of Tu \cite[Proposition 10.7]{tu99} it follows that $\Cstar(\Lambda)$ is in the UCT class.

  It remains to argue that $\Cstar(\Lambda)$ is simple.  Since $\Lambda$ is repetitive the groupoid $G \ltimes \Omega(\Lambda)$ is minimal.  By \cite[Lemma 2.41]{williams2019-toolkit} the orbit spaces of $G \ltimes \Omega(\Lambda)$ and $\cG(\Lambda)$ are homeomorphic so that also the latter groupoid is minimal.  Since $\Lambda$ is aperiodic, $G \ltimes \Omega(\Lambda)$ is principal and so is its subgroupoid $\cG(\Lambda)$.  Finally, observe that $\Cstar(\cG(\Lambda)) = \Cstarred(\cG(\Lambda))$ by amenability of $\cG(\Lambda)$, so that \cite[Theorem 5.1]{brownclarkfarthingsims2014} applies and proves simplicity of $\Cstar(\Lambda)$. 
\end{proof}
We are next going to provide examples to which the previous theorem applies.  It is necessary to verify among others the condition of aperiodicity.  While for repetitive point sets in abelian groups, \cite[Proposition 5.5]{baakegrimm13} shows that aperiodicity is equivalent to the formally weaker requirement that the point stabiliser $G_\Lambda = \{g \in G \mid g\Lambda = \Lambda\}$ is trivial, this is not known in general. The known proof in the abelian case does not generalise to nilpotent groups.  However, in the specific context of model sets introduced below, aperiodicity can be easily verified as we will show.

A \textit{cut-and-project scheme} is a triple $(G,H,\Gamma)$, where $G$ and $H$ are locally compact groups, and $\Gamma$ is a lattice in $G\times H$ such that the projection $\pi_{G}:G\times H\to G$ is injective when restricted to $\Gamma$ and the image of $\Gamma$ under the projection $\pi_H:G\times H\to H$ is dense in $H$.  In view of results obtained in \cite{bjorklundhartnickpgorzelski2018}, we are interested in \emph{regular model sets}, which are obtained as
\begin{gather*}
  \Lambda = \pi_G(\Lambda\cap (G\times W))
\end{gather*}
for windows $W \subseteq H$ satisfying the following conditions: $W$ is a regular compact subset, its boundary is Haar negligible, it satisfies $\partial W \cap \pi_H(\Gamma) = \emptyset$ and $\mathrm{stab}_H(W)=\{e\}$.  If $\Lambda$ is a (regular) model set, then it has finite local complexity by \cite[Proposition 2.13 (i)]{bjorklundhartnick2018}.  As lattices in nilpotent Lie groups are automatically uniform, it follows that (regular) model sets constructed from cut-and-project schemes of nilpotent Lie groups are Delone sets.  Not every lattice in a product of nilpotent Lie groups however gives rise to a cut-and-project scheme and we need to require slightly more: a lattice in a product of groups $\Gamma \leq G \times H$ is \emph{irreducible} if 
the projections onto $G$ and $H$, respectively, are injective and have dense image when restricted to $\Gamma$.

\begin{corollary}
  \label{cor:irreducible-lattice-classifiable}
  Let $G$ be a connected, simply connected nilpotent Lie group and let $\Lambda \subseteq G$ be a regular model set arising from a cut-and-project scheme $(G, H, \Gamma)$ where $H$ is another nilpotent Lie group and $\Gamma$ is an irreducible lattice.  Then $\Cstar(\Lambda)$ is classifiable.
\end{corollary}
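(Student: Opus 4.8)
The plan is to deduce the statement directly from \Cref{thm:point-set-nuclear-dimension}, for which I must check that $\Lambda$ is a repetitive, aperiodic, FLC Delone set in a connected Lie group of polynomial growth. Three of these four hypotheses are immediate. The group $G$ is nilpotent and hence of polynomial growth. Since $\Gamma$ is irreducible, the triple $(G,H,\Gamma)$ is in particular a cut-and-project scheme ($\pi_G$ is injective on $\Gamma$ and $\pi_H(\Gamma)$ is dense in $H$), so as recalled before the statement $\Lambda$ is an FLC Delone set: it is FLC by \cite[Proposition 2.13 (i)]{bjorklundhartnick2018} and relatively dense because the lattice $\Gamma$ in the nilpotent group $G \times H$ is uniform. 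It remains to establish aperiodicity and repetitivity, and here the additional strength of irreducibility over the mere cut-and-project axioms enters.

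For aperiodicity I would use the torus parametrisation of regular model sets, which furnishes a continuous, $G$-equivariant surjection $\beta \colon \Omega(\Lambda) \to (G\times H)/\Gamma$ onto the homogeneous space, where $G$ acts by left translation in the first coordinate. Freeness of $G \grpaction{} \Omega(\Lambda)$ then reduces to freeness of the homogeneous action, which transfers along $\beta$: if $g\omega = \omega$ then $g\beta(\omega) = \beta(g\omega) = \beta(\omega)$, so it suffices that $G$ act freely on $(G\times H)/\Gamma$. The stabiliser of a coset $(a,b)\Gamma$ is $\{g \in G : (a^{-1}ga, e) \in \Gamma\}$, and since irreducibility makes $\pi_H|_\Gamma$ injective, the only element of $\Gamma$ contained in $G \times \{e\}$ is the identity; hence $a^{-1}ga = e$ and $g = e$. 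Thus the homogeneous action, and therefore the hull action, is free, which is the ``easy verification'' of aperiodicity alluded to above.

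Repetitivity is equivalent to minimality of $G \grpaction{} \Omega(\Lambda)$ by \cite[Propositions 2.2 and 2.4]{beckushartnickpogorzelski2020}, and minimality of the hull of a regular model set holds because $\pi_H(\Gamma)$ is dense in $H$; I would cite this from the theory of regular model sets in locally compact groups \cite{bjorklundhartnickpgorzelski2018, bjorklundhartnick2018}. With all four hypotheses verified, \Cref{thm:point-set-nuclear-dimension} yields that $\Cstar(\Lambda)$ is classifiable.

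The main obstacle is not the algebraic transfer of freeness but rather marshalling the correct analytic input from the model-set literature: the existence and $G$-equivariance of the parametrisation map $\beta$ and the minimality of the hull, each of which requires that the regularity hypotheses on the window $W$ (regular compact, Haar-negligible boundary, $\partial W \cap \pi_H(\Gamma) = \emptyset$, $\mathrm{stab}_H(W) = \{e\}$) be exactly matched to the hypotheses of the cited results. The density of $\pi_H(\Gamma)$ and the regularity of $W$ are what ensure that every point of the hull is itself a suitable translate-limit to which the parametrisation applies, so care is needed to confirm these conditions are in force under the stated irreducibility assumption.
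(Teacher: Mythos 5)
Your proposal is correct and follows essentially the same route as the paper: reduce to \Cref{thm:point-set-nuclear-dimension}, verify FLC and the Delone property from the cut-and-project structure, obtain repetitivity via the minimality--repetitivity equivalence of \cite{beckushartnickpogorzelski2020} together with the minimality result for regular model sets in \cite{bjorklundhartnickpgorzelski2018}, and prove aperiodicity by transferring freeness along the $G$-equivariant parametrisation map $\beta \colon \Omega(\Lambda) \to (G \times H)/\Gamma$ of \cite[Theorem 3.1]{bjorklundhartnickpgorzelski2018}, where injectivity of $\pi_H|_\Gamma$ (the extra strength of irreducibility) kills the stabilisers. The only cosmetic difference is that you assert surjectivity of $\beta$, which is not needed for the stabiliser argument.
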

\begin{proof}
  By the previous discussion $\Lambda \subseteq G$ is an FLC Delone set.  So \Cref{thm:point-set-nuclear-dimension} applies if we can show that $\Lambda$ is repetitive and aperiodic.  The former follows form \cite[Proposition 3.3]{bjorklundhartnickpgorzelski2018} combined with \cite{beckushartnickpogorzelski2020}.  We have to show that $\Lambda$ is aperiodic.

  Consider the $G$-space $Y = (G \times H) / \Gamma$ where $G$ acts by multiplication on the left in the first component.  By \cite[Theorem 3.1]{bjorklundhartnickpgorzelski2018} there exists a $G$-equivariant map $\beta \colon \Omega(\Lambda) \to Y$.  Since $G_P \subseteq G_{\beta(P)}$ for every $P \in \Omega(\Lambda)$, it suffices to show that the action of $Y$ is a free $G$-space.  If $g' (g,h)\Gamma = (g,h)\Gamma$ for some $g,g' \in G$ and $h \in H$, then $(g^{-1}g'g, e)\Gamma = \Gamma$.  Since $\Gamma \subseteq G \times H$ is irreducible, the projection $G \times H \to H$ is injective when restricted to $\Gamma$, so that we can conclude that $g^{-1}g' g$ and hence also $g'$ is trivial.
\end{proof}
Concrete examples of irreducible lattices in products of connected, simply connected, nilpotent Lie groups can be obtained from arithmetic constructions.  We provide a class of examples based on the Heisenberg group and mention the work of Machado \cite{machado2020-approximate-lattices-nilpotent}, which shows that all approximate lattices in connected, simply connected, nilpotent Lie groups are of arithmetic origin.
\begin{example}
  \label{ex:arithmetic-lattice-classifiable}
  Denote by $\rH_n$ the Heisenberg group of dimension $2n + 1$.  Let $d \in \ZZ$ be a square-free integer and consider the embedding
  \begin{gather*}
    \ZZ(\sqrt{d}) \to \RR \times \RR \colon
    a+b\sqrt{d} \mapsto (a+b\sqrt{d} , a-b\sqrt{d})
    \eqstop
  \end{gather*}
  It induces an embedding of the Heisenberg groups
  \begin{gather*}
    \rH_n(\ZZ(\sqrt{d})) \to \rH_n(\RR) \times \rH_n(\RR)
  \end{gather*}
  whose image is an irreducible lattice.  We identify $\rH_n(\ZZ(\sqrt{d}))$ with this lattice.  Taking for $W$ any metric ball in $\rH_n(\RR)$, we obtain the regular model set $\Lambda = \pi_1(\rH_n(\ZZ(\sqrt{d})) \cap (\rH_n(\RR) \times W))$, where $\pi_1$ is the first factor projection.  \Cref{cor:irreducible-lattice-classifiable} applies and gives rise to a countable family of classifiable \Cstar-algebras parametrised by the pairs $(n,d)$ for $n \geq 1$ and $d$ a square-free integer.  We note also that instead of $\ZZ[\sqrt{d}]$ one could consider the ring of algebraic integers in $\QQ(\sqrt{d})$, which is a finite index extension if $d \equiv 1 (\mathrm{mod} 4)$.

  More examples can be obtained by following the construction of \cite[Proof of Theorem 1.5, necessity]{machado2020-approximate-lattices-nilpotent} and considering Lie algebras over number fields.  Their classification up to dimension 6 is given in \cite{degraaf2007}.
\end{example}
As we obtain classifiable \Cstar-algebras, it is intriguing to calculate their Elliott invariant and thus determine their isomorphism class.  We are able to achieve the following partial results in this direction. We remark that, descriptions of $\rK$-theory for groupoids associated with cut-and-project schemes in Euclidean groups have been obtained in \cite[Theorem 4.2]{forresthuntonkellendonk2002}. See also \cite[Chapters III, IV and V]{forresthuntonkellendonk2002} for concrete calculations.
\begin{proposition}
  \label{prop:elliott-invariant-model-set}
  Let $\Lambda$ be a regular model set in a conneceted, simply connected nilpotent Lie group arising from an irreducible lattice as in \cref{cor:irreducible-lattice-classifiable}.  Then
  \begin{itemize}
  \item $\rK_*(\Cstar(\Lambda)) \cong \rK^*(\Omega(\Lambda))$ as abelian groups, and
  \item $\Cstar(\Lambda)$ has a unique trace.
  \end{itemize}
\end{proposition}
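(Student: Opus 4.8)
The plan is to handle the two bullets by separate routes, in both cases exploiting the description of $\Cstar(\Lambda)$ through the hull dynamical system $G \grpaction{} \Omega(\Lambda)$. For the $\rK$-theory I use that $\rK$-theory is invariant under stable isomorphism, so that \Cref{lem:stable-isomorphism} reduces the computation to $\rK_*(\cont(\Omega(\Lambda)) \rtimes G)$ (full and reduced crossed products coinciding by amenability of $G$). For the trace I work directly with the {\'e}tale groupoid $\cG(\Lambda)$, whose $\Cstar$-algebra is unital because $\cG(\Lambda)$ is {\'e}tale with compact unit space $\Omega_0(\Lambda)$.

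For the $\rK$-theory, recall that $G$ is connected, simply connected and nilpotent of dimension $n = \dim G$, hence admits a chain of closed connected normal subgroups $\{e\} = G_0 \subset G_1 \subset \dots \subset G_n = G$ with $G_{i-1} \triangleleft G_i$ and $G_i/G_{i-1} \cong \RR$. Each extension $1 \to G_{i-1} \to G_i \to \RR \to 1$ splits because $\RR$ is contractible, giving $G_i \cong G_{i-1} \rtimes \RR$ and thus $A \rtimes G_i \cong (A \rtimes G_{i-1}) \rtimes \RR$ for any $G$-$\Cstar$-algebra $A$. Applying the Connes--Thom isomorphism $\rK_*(B \rtimes \RR) \cong \rK_{*+1}(B)$ at each of the $n$ stages and inducting on $n$ yields
\begin{gather*}
  \rK_*(\cont(\Omega(\Lambda)) \rtimes G) \cong \rK_{* + n}(\cont(\Omega(\Lambda)))
  \eqstop
\end{gather*}
Since $\Omega(\Lambda)$ is compact, $\rK_*(\cont(\Omega(\Lambda)))$ is the topological $\rK$-theory $\rK^*(\Omega(\Lambda))$, and the shift by $n$ merely permutes the two $\ZZ/2$-graded summands; as the claim only concerns the underlying abelian groups this establishes $\rK_*(\Cstar(\Lambda)) \cong \rK^*(\Omega(\Lambda))$.

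For the trace, aperiodicity of $\Lambda$ makes $G \grpaction{} \Omega(\Lambda)$ free, so $\cG(\Lambda)$ is principal. For a principal {\'e}tale groupoid with compact unit space the tracial states of $\Cstarred(\cG(\Lambda))$ are in affine bijection with the $\cG(\Lambda)$-invariant Borel probability measures on $\Omega_0(\Lambda)$: any trace restricts to such a measure on $\cont(\Omega_0(\Lambda))$, and principality forces the trace to annihilate functions supported off the unit space, so that the measure determines the trace. Via the groupoid equivalence $G \ltimes \Omega(\Lambda) \sim \cG(\Lambda)$ underlying \Cref{lem:stable-isomorphism}, invariant measures on the transversal $\Omega_0(\Lambda)$ correspond bijectively, up to scaling, to $G$-invariant measures on $\Omega(\Lambda)$. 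Regular model sets are uniquely ergodic, meaning $G \grpaction{} \Omega(\Lambda)$ carries a unique invariant probability measure \cite{bjorklundhartnickpgorzelski2018}; normalising on the compact transversal therefore leaves a single tracial state, as claimed.

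The decomposition of $G$ into iterated $\RR$-extensions and the bookkeeping of the Connes--Thom degree shift are routine. The step requiring the most care is the trace statement: one must set up the correspondence between traces on $\cG(\Lambda)$ and invariant measures on the transversal $\Omega_0(\Lambda)$, and match these with $G$-invariant measures on $\Omega(\Lambda)$ across the equivalence, and one must import unique ergodicity of the regular model set hull in the present non-abelian Lie group setting. This dynamical input, rather than any operator-algebraic subtlety, is the main obstacle.
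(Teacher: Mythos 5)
Your $\rK$-theory argument is correct, but it takes a genuinely different route from the paper's. Both proofs start from \cref{lem:stable-isomorphism}, but the paper then invokes Higson--Kasparov to get a $\mathrm{KK}^G$-equivalence between $\cont(\Omega(\Lambda))$ and $\conto(G \times \Omega(\Lambda))$, untwists the diagonal action via $(g,P) \mapsto (g,gP)$, and applies descent to identify $\cont(\Omega(\Lambda)) \rtimes G$ with $\cK(\Ltwo(G)) \otimes \cont(\Omega(\Lambda))$ up to $\mathrm{KK}$-equivalence, whereas you decompose $G$ as an iterated semidirect product by $\RR$ and apply Connes--Thom $n = \dim G$ times. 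Your route is more elementary (no equivariant $\mathrm{KK}$-theory) and, notably, it keeps the degree shift by $n$ explicit. This is not cosmetic: a $\mathrm{KK}^G$-equivalence $\cont(\Omega(\Lambda)) \sim_{\mathrm{KK}^G} \conto(G \times \Omega(\Lambda))$ with no shift would, after descent, force a \emph{graded} isomorphism $\rK_*(\cont(\Omega(\Lambda)) \rtimes G) \cong \rK_*(\cont(\Omega(\Lambda)))$, which is incompatible with Connes--Thom whenever $n$ is odd and $\rK^0(\Omega(\Lambda)) \not\cong \rK^1(\Omega(\Lambda))$; so the paper's intermediate step is loose on exactly the point you track, while both arguments yield the ungraded statement actually claimed. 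Two small repairs to your write-up: the extensions $1 \to G_{i-1} \to G_i \to \RR \to 1$ do split, but not ``because $\RR$ is contractible'' (contractibility gives a continuous section, not a homomorphic one); rather, lift a generator of the Lie algebra quotient and exponentiate it to a one-parameter subgroup. And to have closed connected subgroups with $G_{i-1} \triangleleft G_i$ and $\RR$-quotients, take the $G_i$ to be exponentials of a flag of ideals of the nilpotent Lie algebra, which exists by nilpotency.

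Your trace argument has the same skeleton as the paper's: traces on the principal {\'e}tale groupoid $\Cstar$-algebra correspond to invariant probability measures on $\Omega_0(\Lambda)$ (your one-line justification is really an appeal to \cite[Theorem 1.1]{neshveyev2013-kms-states-groupoids}, which the paper quotes), these correspond through transverse measure theory to $G$-invariant finite measures on $\Omega(\Lambda)$, and one concludes by uniqueness of the latter. The gap is at the final and most important step, the one you yourself call ``the main obstacle'': you cite unique ergodicity of the hull of a regular model set as a black box from \cite{bjorklundhartnickpgorzelski2018}. The paper deliberately does not do this; instead it reassembles the proof from that paper's components -- the parametrization map $\beta \colon \Omega(\Lambda) \to (G \times H)/\Gamma$ (their Theorem 3.1), the fact that stationary measures do not charge singular points (their Theorem 3.4), and uniqueness of $G$-invariant measures on $(G \times H)/\Gamma$ via ``the argument of'' their Lemma 3.7. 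This reassembly is a strong indication that no off-the-shelf statement there covers the present setting: the delicate input is precisely that the homogeneous space $Y = (G \times H)/\Gamma$ carries a unique $G$-invariant probability measure when $G$ and $H$ are non-abelian nilpotent Lie groups, a fact which does not follow from the cut-and-project axioms by soft arguments and which is where the structure of $\Gamma$ enters. As written, your proposal therefore proves everything except the dynamical statement it rests on; to close it you must either verify that the hypotheses of a unique ergodicity theorem in \cite{bjorklundhartnickpgorzelski2018} are satisfied by irreducible lattices in products of nilpotent Lie groups, or run the stationary-measure argument as the paper does (or, alternatively, deduce unique ergodicity of $Y$ from the theory of translation actions on nilmanifolds and then push it to the hull via $\beta$).

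A second, smaller gap in the trace part: your phrase ``correspond bijectively, up to scaling'' silently assumes that the $G$-invariant measure $\mu_G$ on $\Omega(\Lambda)$ produced from an invariant probability measure on the transversal by transverse measure theory is \emph{finite}, so that it can be normalised before invoking uniqueness of invariant \emph{probability} measures. This finiteness is not automatic from the construction and is exactly the point addressed in \cref{rem:transverse-measure-theory}; it should be stated and justified (for instance using compactness of $\Omega(\Lambda)$ and the local product structure of the hull over the transversal).
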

\begin{proof}
  Denote by $\Lambda \subseteq G$ the inclusion in the assumptions.  By \cref{lem:stable-isomorphism}, the \Cstar-algebras $\Cstar(\Lambda)$ and $\Cstarred(\cG(\Lambda))$ are stably isomorphic and hence have isomorphic $\rK$-theory.  As connected simply connected nilpotent Lie groups are amenable and as such satisfy the Baum-Connes conjecture \cite{higsonkasparov01} and $G$ is a model for its classifying space for proper actions, it follows that there is a $\mathrm{KK}^G$-equivalence $\cont(\Omega(\Lambda)) \sim_{\mathrm{KK}^G} \conto(G \times \Omega(\Lambda))$, where $G$ acts diagonally on $G \times \Omega(\Lambda)$.  Considering the self-homeomorphism of $G \times \Omega(\Lambda)$ defined by $(g,P) \mapsto (g, gP)$, we find that $G \grpaction{} G \times \Omega(\Lambda)$ is conjugate to the action in its first factor.  By descent in $\mathrm{KK}$-theory, we infer that
  \begin{gather*}
    \cont(\Omega(\Lambda)) \rtimes G
    \sim_{\mathrm{KK}}
    (\conto(G) \rtimes G ) \otimes \cont(\Omega(\Lambda))
    \cong
    \cK(\Ltwo(G)) \otimes \cont(\Omega(\Lambda))
    \eqstop
\end{gather*}
This shows that
\begin{gather*}
  \rK_*(\Cstar(\Lambda))
  \cong
  \rK_*(\cont(\Omega(\Lambda)) \rtimes G)
  \cong
  \rK_*(\cont(\Omega(\Lambda)))
  \cong
  \rK^*(\Omega(\Lambda))
  \eqstop
\end{gather*}

Let us now show that $\Cstar(\Lambda)$ has a unique trace.  Since $\cG(\Lambda)$ is principal, by \cite[Theorem 1.1]{neshveyev2013-kms-states-groupoids} (see also \cite{renault87-produits-croises}), traces on $\Cstar(\Lambda)$ are precisely of the form $\int \rmd \mu \circ \rE$ for $\cG(\Lambda)$ invariant probability measures $\mu \in \cP(\Omega_0(\Lambda))$, where $\rE: \Cstar(\Lambda) \to \cont(\Omega_0(\Lambda))$ is the natural conditional expectation.  So it suffices to prove that $\Omega_0(\Lambda)$ caries a unique $\cG(\Lambda)$-invariant probability measures.  Let $\mu$ be such measure and consider the finite measure $\mu_G$ on $\Omega(\Lambda)$ associated with it by transverse measure theory through the correspondence $\cG(\Lambda) \sim G \ltimes \Omega(\Lambda)$. If $\nu$ denotes an admissible probability measure on $G$, and $\tilde \mu_G$ is a probability measure equivalent to $\mu_G$, then $\frac{1}{n} \sum_{i = 1}^n \nu^{*n} * \tilde \mu_G$ is a $\nu$-stationary probability measure that is equivalent to $\mu_G$.  Consider the continuous map $\beta \colon \Omega(\Lambda) \to G \times H / \Gamma$ and the subset $\Omega(\Lambda)^{\mathrm{ns}}$ of non-singular points introduced in \cite{bjorklundhartnickpgorzelski2018}.  Then \cite[Theorem 3.4]{bjorklundhartnickpgorzelski2018} applied to $\frac{1}{n} \sum_{i = 1}^n \nu^{*n} * \tilde \mu_G$ shows that $\Omega(\Lambda)^{\mathrm{ns}}$ is $\mu_G$-negligible.  Hence $\beta_* \mu_G$ is a well-defined $G$-invariant $\sigma$-finite measure on $G \times H / \Gamma$.  So the argument of \cite[Lemma 3.7]{bjorklundhartnickpgorzelski2018} shows that $\mu_G$ is a scalar multiple of the unique $G \times H$-invariant probability measure on $G \times H/\Gamma$.  This proves uniqueness of $\mu$.
\end{proof}

\begin{remark}
  \label{rem:transverse-measure-theory}
  In \cite[Proposition 4.1]{enstadraum2022} we described how transverse measure theory associates finite measures on $\Omega_0(\Lambda)$ to $G$-invariant probability measures on $\Omega(\Lambda)$.  The proof of uniqueness of the trace on $\Cstar(\Lambda)$ in \cref{prop:elliott-invariant-model-set} needs the reverse construction though, which is why we presented a proof that the measure $\mu_G$ in there is finite.
\end{remark}

\begin{remark}
  \label{rem:ordered-k-theory}  
  In view of \cref{prop:elliott-invariant-model-set} and the continuous map $\beta\colon \Omega(\Lambda) \to G \times H / \Gamma$ from \cite{bjorklundhartnickpgorzelski2018}, which is bijective on the set of non-singular points, it would seem natural to expect that $\rK_\bullet(\Cstar(\Lambda)) \cong \rK^\bullet(G \times H / \Gamma)$.  While this might hold as abstract groups, such isomorphism cannot respect the order of $\rK$-theory.  Indeed, $\Omega_0(\Lambda) = \cG(\Lambda)\nought$ is totally disconnected by \cref{prop:FLC-implies-tdlc} and carries an invariant probability measure so that there are infinite strictly descending chains in the positive cone of $\rK_\bullet(\Cstar(\Lambda))$.  As $G \times H / \Gamma$ is a closed manifold, its $\rK$-theory is finitely generated.
\end{remark}

\begin{problem}
  \label{prob:elliot-invariant}
  Calculate the Elliott invariant of $\Cstar(\Lambda)$ for a regular model set $\Lambda$ in a conneceted, simply connected nilpotent Lie group arising from an irreducible lattice as in \cref{cor:irreducible-lattice-classifiable}.
\end{problem}




{\small
  \printbibliography
}


\vspace{2em}

\begin{center}
\begin{minipage}[t]{0.33\linewidth}
  \small
  Ulrik Enstad \\
  Department of Mathematics \\
  University of Oslo \\
   Moltke Moes vei 35 \\
  0851 Oslo \\
  Norway \\[0.5em]
  ubenstad@math.uio.no
\end{minipage}
\begin{minipage}[t]{0.33\linewidth}
  \small
  Gabriel Favre \\
  Department of Mathematics \\
  Stockholm University \\
  SE-106 91 Stockholm \\
  Sweden \\[0.5em]
  favre@math.su.se
\end{minipage}
\begin{minipage}[t]{0.33\linewidth}
  \small
  Sven Raum \\
  Institut für Mathematik \\
  Universit{\"a}t Potsdam \\
  Campus Golm, Haus 9 \\
  Karl-Liebknecht-Str. 24-25 \\
  D-14476 Potsdam OT Golm \\
  Germany \\[0.5em]
  sven.raum@uni-potsdam.de
\end{minipage}
\end{center}
\end{document}